\newtheorem{theorem}{Theorem}[section]
\newtheorem{prop}[theorem]{Proposition}
\newtheorem{lemma}[theorem]{Lemma}
\newtheorem{coro}[theorem]{Corollary}
\newtheorem{prop-def}{Proposition-Definition}[section]
\newtheorem{coro-def}{Corollary-Definition}[section]
\theoremstyle{definition}
\newtheorem{defn}[theorem]{Definition}
\newtheorem{remark}[theorem]{Remark}
\newtheorem{exam}[theorem]{Example}
\newcommand{\nc}{\newcommand}
\nc{\tred}[1]{\textcolor{red}{#1}}
\nc{\tblue}[1]{\textcolor{blue}{#1}}
\nc{\tgreen}[1]{\textcolor{green}{#1}}
\nc{\tpurple}[1]{\textcolor{purple}{#1}}
\nc{\btred}[1]{\textcolor{red}{\bf #1}}
\nc{\btblue}[1]{\textcolor{blue}{\bf #1}}
\nc{\btgreen}[1]{\textcolor{green}{\bf #1}}
\nc{\btpurple}[1]{\textcolor{purple}{\bf #1}}
\nc{\NN}{{\mathbb N}}
\nc{\ncsha}{{\mbox{\cyr X}^{\mathrm NC}}} \nc{\ncshao}{{\mbox{\cyr
			X}^{\mathrm NC}_0}}
\newcommand{\delete}[1]{}
	\nc{\mlabel}[1]{\label{#1}}
	\nc{\mcite}[1]{\cite{#1}}
	\nc{\mref}[1]{\ref{#1}}
	\nc{\meqref}[1]{\eqref{#1}}
	\nc{\mbibitem}[1]{\bibitem{#1}}
	\nc{\mlabel}[1]{\label{#1}{\hfill \hspace{1cm}{\bf{{\ }\hfill(#1)}}}}
	\nc{\mcite}[1]{\cite{#1}{{\bf{{\ }(#1)}}}}
	\nc{\mref}[1]{\ref{#1}{{\bf{{\ }(#1)}}}}
	\nc{\meqref}[1]{\eqref{#1}{{\bf{{\ }(#1)}}}}
	\nc{\mbibitem}[1]{\bibitem[\bf #1]{#1}}
\nc{\sha}{{\mbox{\cyr X}}}  
\newfont{\scyr}{wncyr10 scaled 550}
\nc{\ssha}{\mbox{\bf \scyr X}}
\nc{\shap}{{\mbox{\cyrs X}}} 
\nc{\shpr}{\diamond}    
\nc{\shp}{\ast} \nc{\shplus}{\shpr^+}
\nc{\shprc}{\shpr_c}    
\nc{\dep}{\mrm{dep}} \nc{\lc}{\lfloor} \nc{\rc}{\rfloor}
\nc{\db}{\leq_{\rm db}} \nc{\bfk}{\bf k}
\nc{\cala}{{\mathcal A}} \nc{\calb}{{\mathcal B}}
\nc{\calc}{{\mathcal C}}
\nc{\cald}{{\mathcal D}} \nc{\cale}{{\mathcal E}}
\nc{\calf}{{\mathcal F}} \nc{\calg}{{\mathcal G}}
\nc{\calh}{{\mathcal H}} \nc{\cali}{{\mathcal I}}
\nc{\call}{{\mathcal L}} \nc{\calm}{{\mathcal M}}
\nc{\caln}{{\mathcal N}} \nc{\calo}{{\mathcal O}}
\nc{\calp}{{\mathcal P}} \nc{\calr}{{\mathcal R}}
\nc{\cals}{{\mathcal S}} \nc{\calt}{{\mathcal T}}
\nc{\calu}{{\mathcal U}} \nc{\calw}{{\mathcal W}} \nc{\calk}{{\mathcal K}}
\nc{\calx}{{\mathcal X}} \nc{\CA}{\mathcal{A}}
\nc{\fraka}{{\mathfrak a}} \nc{\frakA}{{\mathfrak A}}
\nc{\frakb}{{\mathfrak b}} \nc{\frakB}{{\mathfrak B}}
\nc{\frakc}{{\mathfrak c}}
\nc{\frakD}{{\mathfrak D}} \nc{\frakF}{\mathfrak{F}}
\nc{\frakf}{{\mathfrak f}} \nc{\frakg}{{\mathfrak g}}
\nc{\frakH}{{\mathfrak H}} \nc{\frakL}{{\mathfrak L}}
\nc{\frakM}{{\mathfrak M}} \nc{\bfrakM}{\overline{\frakM}}
\nc{\frakm}{{\mathfrak m}} \nc{\frakP}{{\mathfrak P}}
\nc{\frakN}{{\mathfrak N}} \nc{\frakp}{{\mathfrak p}}
\nc{\frakS}{{\mathfrak S}} \nc{\frakT}{\mathfrak{T}}
\nc{\frakX}{{\mathfrak X}}
\nc{\frakZ}{\mathfrak{Z}}
\nc{\frakJ}{\mathfrak{J}}
\nc{\mfrakL}{\Phi}
\nc{\mfrakH}{\Psi}
\nc{\MfrakL}{\phi}
\nc{\MfrakH}{\psi}
\nc{\frakR}{\mathfrak{R}}
\nc{\GL}{\mathrm{GL}}
\nc{\gl}{\mathfrak{gl}}
\nc{\xsj}{\vartriangleright}
\font\cyr=wncyr10 \font\cyrs=wncyr7
\nc{\li}[1]{\textcolor{red}{#1}}
\nc{\lir}[1]{\textcolor{red}{Li:#1}}
\nc{\zong}[1]{\textcolor{blue}{Zong: #1}}
\nc{\xing}[1]{\textcolor{blue}{Xing:#1}}
\nc{\revise}[1]{\textcolor{red}{#1}}
\nc{\dd}{{\rm d}} \nc{\Ad}{{\rm AD}}
\nc{\CC}{\mathbb{C}} \nc{\PP}{\mathbb{P}} \nc{\QQ}{\mathbb{Q}} \nc{\ZZ}{\mathbb{Z}}
\nc{\ZZZ}{\mathbb{Z}^\ast} \nc{\RR}{\mathbb{R}} \nc{\id}{\rm id}
\nc{\AD}{{\rm AD}} \nc{\aad}{{\rm Ad}} \nc{\pown}{P_n} \nc{\powm}{P_m}
\nc{\Aut}{{\rm Aut}} \nc{\ad}{{\rm Ad}} \nc{\fni}{\frac{1}{n}} \nc{\fmi}{\frac{1}{m}}
\nc{\ada}{{\rm ad}} \nc{\mulz}{\cdot_0}
\nc{\complim}{synchronized\xspace}
\nc{\compgroup}{synchronized\xspace}
\nc{\limwtzero}{limit-weight zero\xspace}
\nc{\compint}{synchronized integrable\xspace}
\nc{\mcorr}{exponential adjoint\xspace}
\nc{\rbo}{\mathrm{RBO}}
\nc{\vpa}{\vspace{-.1cm}}
\nc{\vpb}{\vspace{-.2cm}}
\nc{\vpc}{\vspace{-.3cm}}
\nc{\vpd}{\vspace{-.4cm}}
\nc{\vpe}{\vspace{-.5cm}}
\begin{document}

\title[Rota-Baxter groups with weight zero and integration on topological groups]{Rota-Baxter groups with weight zero and integration on topological groups}

%

\author{Xing Gao}
\address{School of Mathematics and Statistics, Lanzhou University
Lanzhou, 730000, China;
Gansu Provincial Research Center for Basic Disciplines of Mathematics
and Statistics
Lanzhou, 730070, China;
School of Mathematics and Statistics
Qinghai Nationalities University, Xining, 810007, China
}
\email{gaoxing@lzu.edu.cn}

\author{Li Guo}
\address{
	Department of Mathematics and Computer Science,
	Rutgers University,
	Newark, NJ 07102, US}
\email{liguo@rutgers.edu}

\author{Zongjian Han}
\address{School of Mathematics and Statistics, Lanzhou University, Lanzhou, Gansu 730000, China}
\email{2715873690@qq.com}

\date{\today}
\begin{abstract}
Rota-Baxter groups with weights $\pm 1$ have attracted quite much attention since their recent introduction, thanks to their connections with Rota-Baxter Lie algebras, factorizations of Lie groups, post- and pre-Lie algebras, braces and set-theoretic solutions of the Yang-Baxter equation. Despite their expected importance from integrals on groups to pre-groups and Yang-Baxter equations, Rota-Baxter groups with weight zero and other weights has been a challenge to define and their search has been the focus of several attempts. 

By composing an operator with a section map as a perturbation device, we first generalize the notion of a Rota-Baxter operator on a group from the existing case of weight $\pm 1$ to the case where the weight is given by a pair of maps and then a sequence limit of such pairs. From there, two candidates of Rota-Baxter operators with  weight zero are given. One of them is the Rota-Baxter operator with limit-weight zero detailed here, with the other candidate introduced in a companion work. This operator is shown to have its tangent map the Rota-Baxter operator with  weight zero on Lie algebras. It also gives concrete applications in integrals of maps with values in a class of topological groups called $\RR$-groups, satisfying a multiplicative version of the integration-by-parts formula.

	
In parallel, differential groups in this framework is also developed and a group formulation of the First Fundamental Theorem of Calculus is obtained.


	
\end{abstract}

\makeatletter
\@namedef{subjclassname@2020}{\textup{2020} Mathematics Subject Classification}
\makeatother
\subjclass[2020]{
22E60, 
17B38, 
17B40, 
16W99, 
45N05 
}

\keywords{Rota-Baxter group with pair-weight; Rota-Baxter group with limit-weight; $\RR$-group; topological group; Lie group; differential group; integration on groups}

\maketitle

\vspace{-1cm}
\tableofcontents

\setcounter{section}{0}

\allowdisplaybreaks

\section{Introduction}
Using an algebraic interpretation of perturbation in terms of sections, this paper generalizes Rota-Baxter groups from the existing case of weight $\pm 1$ to weights given by (sequences of) pairs of maps, leading to a notion of a Rota-Baxter group with weight zero which has been sought out in recent works. Its relations to Rota-Baxter Lie algebras with weight zero and integration on topological groups, as well as the differential counterparts, are also developed.

\vpb

\subsection{Rota-Baxter algebras and classical Yang-Baxter equation}
The notion of Rota-Baxter algebras have shown its importance in both the associative algebra and Lie algebra contexts.

For the associative algebra, the notion was introduced by G. Baxter in 1960~\cite{Ba}.
Fixing a scalar $\lambda$, a {\bf Rota-Baxter algebra with weight $\lambda$} is a pair $(R,P)$
consisting of an associative algebra $R$ and a linear operator $P:R\to R$ that satisfies the identity
\begin{equation}
	P(x)P(y)=P\big(xP(y)\big)+P\big(P(x)y\big)+\lambda P(xy), \ \quad x,y \in R.
	\label{eq:rb}
\end{equation}
Then $P$ is called a {\bf Rota-Baxter operator (RBO) with  weight $\lambda$}.
The notion of a Rota-Baxter operator with weight zero is an algebraic abstraction of   the Riemann integral operator $I[f](x):=\int_a^xf(t)\,dt$ for which the Rota-Baxter identity is simply the integration-by-parts formula.

Under a linear transformation, a Rota-Baxter operator with weight $1$ becomes a so-called modified Rota-Baxter operator, which have already been employed by the prominent analyst Tricomi in 1951~\cite{Tri} and then by Cotlar in his remarkable 1955 paper~\cite{Co} unifying Hilbert transformations and ergodic theory.
After the pioneering works of Atkinson, Cartier and Rota~\cite{At,Ca,Ro1,Ro2}, Rota-Baxter algebra experienced a remarkable renaissance in the recent decades thanks to its broad applications such as the Connes-Kreimer approach of renormalization in quantum field theory~\cite{CK}. See~\cite{Gub} for further details.

The Lie algebra variations of the Rota-Baxter operator and modified Rota-Baxter operator were discovered by Semenov-Tian-Shansky in~\cite{STS} as the operator forms of the classical Yang-Baxter equation (CYBE)~\cite{BD}. The CYBE arose from the study of
inverse scattering theory in the 1980s and then was recognized as the
``semi-classical limit" of the quantum Yang-Baxter equation following the works of C.~N. Yang~\cite{Ya} and R.~J. Baxter~\cite{BaR}.
CYBE is further related to classical integrable
systems and quantum groups~\cite{CP}.

In the general context of operads, the Rota-Baxter operator plays the role of splitting operations, covering the important notions of pre-Lie algebra, dendriform algebra and post-Lie algebra~\cite{BBGN,Val}.

The differential counterpart of the Rota-Baxter associative algebra is  the differential algebra which was developed initially by Ritt and Kolchin as an algebraic study of differential equations and has expanded into a vast area of mathematical research and applications~\cite{Ko,PS,Ri}.

\subsection{Rota-Baxter groups, braces and quantum Yang-Baxter equation}

Fundamental in applications to integrable systems~\cite{FRS,RS1,RS2}, the Global Factorization Theorem of Semenov-Tian-Shansky for a Lie group was obtained from integrating his Infinitesimal Factorization Theorem for a Lie algebra, making use of the modified Yang-Baxter equation (equivalently, a Rota-Baxter operator with weight $1$).
Thus it is natural to find a Rota-Baxter operator on the Lie group, so that the Global Factorization Theorem can be proved directly on the Lie group level. Such a notion was found in~\cite{GLS}: a {\bf Rota-Baxter operator on a group with weight $1$} is defined to be a map $\frakB:G\to G$ such that
\begin{equation*}
	\frakB(a)\frakB(b)=\frakB\Big(a\frakB(a)b\frakB(a)^{-1} \Big), \quad a, b\in G.
\end{equation*}
This notion has inspired a host of studies involving Rota-Baxter operators on Hopf algebras, pre-Lie groups, post-Lie groups, skew left braces and set-theoretic solutions of the quantum Yang-Baxter equation~\cite{BGST,BG,BN,BNY,CS,CMS,DR,Go,GGLZ,JSZ,LST1,LST2}. Thus a Rota-Baxter operator with weight $1$ on Lie algebras, as an operator form of the classical Yang-Baxter equation, found a quantum analog in Rota-Baxter operators on Lie groups with weight $1$, as an operator form of the set-theoretic solutions of the Yang-Baxter equation.
\vpb

\subsection{Importance of the weight zero case}

Despite the importance of Rota-Baxter operators on groups, the weights of the operators have been restricted to $\pm 1$.
However, the Rota-Baxter operator with  weight zero on a linear structure is still indispensable. For the associative algebra, it is the abstraction of the integration-by-parts formula; while for the Lie algebra, it is the operator form of the CYBE. Furthermore, it gives rise to the notion of pre-Lie algebra, with its own broad applications.
Therefore, finding its group-theoretic counterpart, namely a Rota-Baxter operator with  weight zero on groups, has been the goal of intensive recent research, in which various conventions are made to the weight $\pm 1$ case, such as the abelianness of the groups. Likewise, a post-group is called a pre-group if the underlying Lie group is abelian.
For some of the attempts, see~\cite{BGST,BN,GGLZ,LST1,LST2}.
\vpb

\subsection{The present approach (outline)}
In the present paper and its companion~\cite{GGH2}, we take a different approach to give a new notion of Rota-Baxter groups, including the weight-zero case, that satisfy the expected properties. Both papers built from the more general notion of Rota-Baxter operators with limit-weights. We give one interpretation in the present work, with applications to integration on topological groups. In the companion paper~\cite{GGH2}, we use a limit-abelian condition for relative Rota-Baxter operators with limit-weights to give another interpretation of Rota-Baxter operator with  weight zero,  with applications to pre-groups, braces and Yang-Baxter equations. Similar notions on Hopf algebras and some other algebraic structures can also be defined, generalizing the case with weight $\pm 1$~\cite{Go,LST1}. Some of these directions will be pursued in subsequent works~\cite{GGHZ}.

We next give an outline of this paper. For simplicity, we will focus on Rota-Baxter operators on groups in the outline even though differential operators are also treated in later sections.
We use Figure~\ref{fig:rbalggp} on page~\pageref{fig:rbalggp} to organize the notions introduced, where Rota-Baxter is abbreviated as RB.



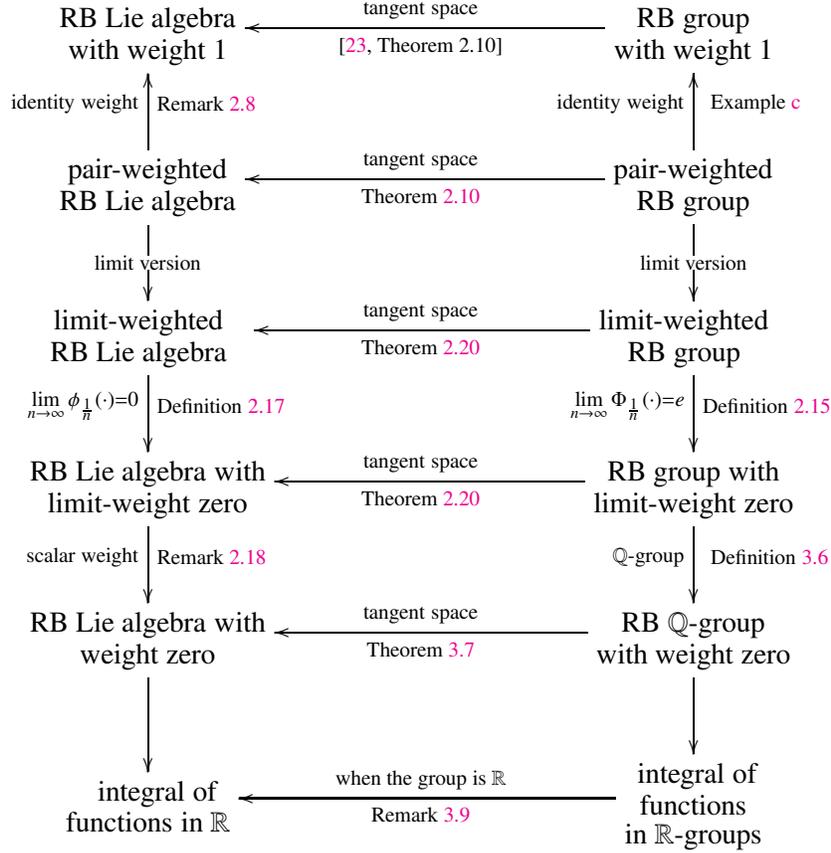
\begin{figure}
	\caption{Rota-Baxter groups and Rota-Baxter Lie algebras}
\label{fig:rbalggp}
\begin{displaymath}
\xymatrix@R=1.0cm{
		\txt{\small RB Lie algebra\\ \small with weight 1} \ar@{<-}[rrrr]^{\txt{\tiny tangent space}}_{\txt{\tiny \cite[Theorem~2.10]{GLS}}}
		&&&&\txt{\small RB group  \\ \small  with weight 1}
		\\
		\txt{\small pair-weighted \\ \small RB Lie algebra}\ar@{<-}[rrrr]^{\txt{\tiny tangent space}}_{\txt{\tiny Theorem~\ref{thm:tgop}}}
		\ar@{->}[u]^{\txt{\tiny  identity weight}}_{\txt{\tiny Remark~\ref{rk:weight lambda}}}
		&&&&\txt{\small pair-weighted \\ \small RB group}\ar@{->}[u]^{\txt{\tiny  identity weight}}_{\txt{ \tiny Example~\ref{exam:pairexam}}}
		\\
		\txt{\small limit-weighted \\ \small RB Lie algebra} \ar@{<-}[rrrr]^{\txt{\tiny tangent space}}_{\txt{\tiny Theorem~\ref{thm:tanglim}}} \ar@{<-}[u]|-{\txt{\tiny limit version}}\hole &&&&\txt{\small limit-weighted \\ \small RB group}\ar@{<-}[u]|-{\txt{\tiny limit version}}\hole
		\\
		\txt{\small RB Lie algebra with\\ \small  limit-weight zero} \ar@{<-}[rrrr]^{\txt{\tiny tangent space}}_{\txt{\tiny Theorem~\ref{thm:tanglim}}} \ar@{<-}[u]^{\tiny \lim\limits_{n\to\infty}\MfrakL_{\frac{1}{n}}(\cdot)=0}_{\txt{\tiny  Definition~\ref{defn:rblielim}}} &&&&\txt{\small RB group with\\ \small  limit-weight zero}\ar@{<-}[u]^{\tiny \lim\limits_{n\to\infty}\mfrakL_{\frac{1}{n}}(\cdot)=e}_{\txt{\tiny Definition~\ref{defn:lhnrb}}}
		\\
		\txt{\small RB Lie algebra with\\ \small  weight zero}\ar@{<-}[rrrr]^{\txt{\tiny tangent space}}_{\txt{\tiny Theorem~\ref{thm:rbg2rbl0}}} \ar@{<-}[u]^{\txt{\tiny scalar weight}}_{\txt{\tiny Remark~\ref{rk:weight
					0}}} &&&& \ar@{<-}[u]^{\txt{\tiny  $\mathbb{Q}$-group}}_{\txt{ \tiny Definition~\ref{defn:rbg0}}} \txt{\small RB $\mathbb{Q}$-group \\ \small with weight zero}
		\\
		\txt{\small \ \, integral of\\ \small functions in $\mathbb{R}$} \ar@{<-}[rrrr]^{\txt{\tiny when the group is $\mathbb{R}$}}_{\txt{\tiny Remark~\ref{rk:Riem}}} \ar@{<-}[u]&&&&\txt{\small\  integral of\\ \small functions\\ \small in $\mathbb{R}$-groups} \ar@{<-}[u]
	}
\end{displaymath}
\end{figure}

\subsubsection{Rota-Baxter groups with pair-weights and limit-weights $($Section~\ref{sec:pairlimit}$)$}
Recall that a section of a surjective map $f:X\to Y$ is a map $g:Y\to X$ such that $fg=\id_Y$. Then $g$ is injective. To mirror the notion, we also call $f$ a cosection of the injective map $g$. We will generalize the Rota-Baxter operator with  weight $1$ on a group by deforming it with a cosection-section pair, and then by a sequence to such pairs to obtain a candidate of the Rota-Baxter operator with  weight zero on groups.

\paragraph{\em Rota-Baxter groups with pair-weights}
Let $G$ be a group, and let  $\mfrakL,\mfrakH:G\to G$ be a pair of maps with $\mfrakL \mfrakH= \id_G$.
Then a map $\mathfrak{B}:G\to G$ is called a {\bf Rota-Baxter operator with pair-weight $(\mfrakL,\mfrakH)$} if
\begin{equation*}
	\mathfrak{B}(a) \mathfrak{B}(b) =  \mathfrak{B} \bigg(\mfrakH \Big( \mfrakL(a) \mathfrak{B}(a) \mfrakL(b) \mathfrak{B}(a)^{-1} \Big) \bigg),\quad  a,b\in G.
	\label{eq:rbgw0}
\end{equation*}
Then the group $G$ is called a {\bf Rota-Baxter group with pair-weight $(\mfrakL,\mfrakH)$} (Definition~\ref{defn:lhrbg}).

The special case when $\mfrakL=\mfrakH=\id_G$ recovers the Rota-Baxter operator with weight $1$ defined in~\cite{GLS}.
In the same way, a Rota-Baxter operator on a Lie algebra with pair-weight $(\MfrakL,\MfrakH)$ is defined and serves as the tangent map of a Rota-Baxter operator with pair-weight on a Lie group (Theorem~\ref{thm:tgop}).

\paragraph{\em Rota-Baxter groups with limit-weights and \limwtzero}

On a topological group $G$, we consider a sequence of pairs of maps $\mfrakL_{\frac{1}{n}}, \mfrakH_{\frac{1}{n}}: G\to G$ with $\mfrakL_{\frac{1}{n}} \mfrakH_{\frac{1}{n}}=\id_G$. such that the limit $\lim\limits_{n\to\infty}\mfrakH_{\frac{1}{n}}\Big(\mfrakL_{\frac{1}{n}}(a)\mfrakL_{\frac{1}{n}}(b)\Big)$ can be obtained in synchronization with approximations of $a$ and $b$ (Definition~\ref{defn:synchronized-limit}). Then a Rota-Baxter operator on $G$ with limit-weight is defined by the operator identity (Definition~\ref{defn:lhnrb})
\begin{equation*}
	\frakB(a) \frakB(b) =  \frakB \bigg(\lim\limits_{n\to \infty}  \mfrakH_{\frac{1}{n}} \Big( \mfrakL_{\frac{1}{n}}(a) \frakB(a) \mfrakL_{\frac{1}{n}}(b) \frakB(a)^{-1} \Big) \bigg).
	\label{eq:rbolimwt0}
\end{equation*}
Under the additional condition of $\lim\limits_{n\to \infty} \mfrakL_{\frac{1}{n}}(a)=e$ for each $a\in G$, where $e$ is the identity of $G$,
we call $\frakB$ a {\bf Rota-Baxter operator with \limwtzero}. This is one of the two candidates for a group-theoretic analog of the Rota-Baxter operator with  weight zero on Lie algebras.

Applying the same process to Lie algebras, we obtain the notions of a Rota-Baxter operator on a Lie algebra with limit-weight and with limit-weight zero, which are again shown to serve as the tangent maps of the Rota-Baxter operators with  limit-weights and limit-weight zero on Lie groups (Theorem~\ref{thm:tanglim}).

\paragraph{\em Realizations as integrals on topological groups}
As noted above, a motivation of the classical notion of Rota-Baxter operators with weight zero is the integration operator as characterized by the integration-by-parts formula. As examples and justifications of our new notion, we develop an integral theory for functions $a:\RR\to G$ which can be regarded as paths in a topological group $G$. We establish that, under a synchronizing condition, the integral operator thus obtained satisfies a multiplicative version of the integration-by-parts formula (Theorem~\ref{thm:HIfml}).

\subsubsection{Rota-Baxter operators on $\RR$-groups $($Section~\ref{sec:qrgroup}$)$}
To make sense of a Rota-Baxter operator on a group with an arbitrary scalar weight, and to provide systematic examples of Rota-Baxter operators with limit-weight zero, in Section~\ref{sec:qrgroup} we introduce a special class of topological groups, called $\RR$-groups for which taking real-valued powers are bijective and consistent with the group structure in an expected manner (Definition~\ref{defn:rgroup}).

By specializing the previous notion of a Rota-Baxter operator with  limit-weight to this case, we define Rota-Baxter operators with real-valued weights whose tangents are the usual Rota-Baxter operators with real-valued weight on Lie algebras (Theorem~\ref{thm:rbg2rbl0}).
We also obtain concrete examples of Rota-Baxter operators with limit-weight zero as integrals of functions with values in $\RR$-groups (Corollary~\ref{co:rgpint}). By further specializing to the matrix groups where the exponential map from its Lie algebra is bijective, the limit-weight zero operator identity can be put in a finite form (Theorem~\ref{thm:rboze}), recovering the notion of a Rota-Baxter operator with  weight zero recently introduced in~\cite{LST2}. Differential operator with various weights are also introduced and equipped with analytic meaning as derivations of $\RR$-group-values functions. Finally a First fundamental Theorem of Calculus is established (Proposition~\ref{pp:fftc}).

\smallskip
\noindent
{\bf Notations.} Throughout this paper, we will work over a fixed field $\bfk$ of characteristic zero. It is the base field for all vector spaces, algebras,  tensor products, as well as linear maps.
Denote by $\mathbb{P}$ the set of positive ones. Let $e$ be the identity of a group.

\section{Pair-weighted and limit-weighted Rota-Baxter groups and differential groups}
\label{sec:pairlimit}
In this section, we first introduce the notion of a Rota-Baxter operator on a group with weight given by a pair of maps. By considering a limit process from an infinite sequence of pairs, we define a Rota-Baxter operator on a group with a limit-weight and then a Rota-Baxter operator on a group with limit-weight zero. We then give some basic properties.

\subsection{Pair-weighted Rota-Baxter groups and Rota-Baxter Lie algebras} \label{ss:rbgw}
We begin with the concept of a Rota-Baxter operator on a group with weight given by a pair of maps on the group. This notion includes as special cases the Rota-Baxter operator on a group with weight $\pm 1$ in~\cite{GLS}, as well as the one in~\cite{BN}.


\begin{defn} Let $G$ be a group and $\mfrakL,\mfrakH:G\rightarrow G$ be maps.
\begin{enumerate}
\item We call $(G,(\mfrakL,\mfrakH))$, or simply $G$,  a {\bf $(\mfrakL,\mfrakH)$-group} if $\mfrakL \mfrakH= \id_G$. \label{it:lhrbg1}

\item For a $(\mfrakL,\mfrakH)$-group $G$, a map $\mathfrak{B}:G\to G$ is called a {\bf Rota-Baxter operator with pair-weight $(\mfrakL,\mfrakH)$} if
\vpb
\begin{equation}
	\mathfrak{B}(a) \mathfrak{B}(b) =  \mathfrak{B} \bigg(\mfrakH \Big( \mfrakL(a) \mathfrak{B}(a) \mfrakL(b) \mathfrak{B}(a)^{-1} \Big) \bigg),\quad  a,b\in G.
	\label{eq:rbgw}
\end{equation}
Then the $(\mfrakL,\mfrakH)$-group $G$
is called a {\bf Rota-Baxter group with pair-weight $(\mfrakL,\mfrakH)$}.
\label{it:lhrbg2}
\item A $(\mfrakL,\mfrakH)$-group $G$ is called a {\bf $(\mfrakL,\mfrakH)$-Lie group} if the group $G$ is a Lie group, and $\mfrakL$ and $\mfrakH$ are smooth maps. \label{it:lhrbg3}
\item If a $(\mfrakL,\mfrakH)$-Lie group $G$ is a Rota-Baxter group with pair-weight $(\mfrakL,\mfrakH)$ for which the Rota-Baxter operator $\frakB$ is a smooth map, then we call $G$ {\bf a Rota-Baxter Lie group with pair-weight $(\mfrakL,\mfrakH)$}.
\end{enumerate}
\label{defn:lhrbg}
\end{defn}
Here we use pair-weight for the Rota-Baxter operator to distinguish it from the scalar weight in the existing literature and the limit-weight to be introduced later.

\begin{exam}
\begin{enumerate}
\item Taking $\mfrakL=\mfrakH: =\id_G$, then Eq.~(\ref{eq:rbgw}) recovers
the concept of a {\bf Rota-Baxter operator with weight $1$} on $G$ introduced in~\cite{GLS}, defined by
\vpb
\begin{equation}
\frakB(a)\frakB(b)=\frakB\Big(a\frakB(a)b\frakB(a)^{-1} \Big), \quad a, b\in G.
\label{eq:rbog}
\vpb
\end{equation}
\item If $\mfrakL(a)=\mfrakH(a):=a^{-1}$ for $a\in G$, then Eq.~(\ref{eq:rbgw}) is reduced to
the concept of a {\bf Rota-Baxter operator with weight $-1$} on $G$ in~\cite{GLS}, defined by
\vpb
$$\frakB(a)\frakB(b)=\frakB\Big(\frakB(a)b\frakB(a)^{-1}a \Big).
\vpb
$$
\item \label{exam:pairexam}
When $\mfrakL$ (and hence $\mfrakH$) is bijective, that is, the pair $(\mfrakL,\mfrakH)$ is bijective as in Definition~\ref{defn:pair} below, and formally denote $\mfrakL(a)=a^\lambda$, we recover the notion of weight $\lambda$ Rota-Baxter operators in~\cite[Proposition~4.1]{BN}. Note that in order for the $\lambda$-power to really make sense, the notion of $\QQ$-groups and $\RR$-groups are needed. See Remark~\ref{exam:wtlambda}.
\end{enumerate}
\end{exam}

For their further study, we impose additional conditions on $(\mfrakL,\mfrakH)$-groups.
\begin{defn}
Let $G$ be a $(\mfrakL,\mfrakH)$-group.
\begin{enumerate}
\item We call the pair $(\mfrakL,\mfrakH)$ {\bf bijective} if $\mfrakL$ is bijective with $\mfrakL^{-1}=\mfrakH$.
\item We call the pair $(\mfrakL,\mfrakH)$ {\bf unital} if $\mfrakL(e)=\mfrakH(e)=e$.
\item
We call the pair $(\mfrakL,\mfrakH)$ a pair of {\bf inverse preserving maps} on $G$ if $\mfrakL(a^{-1})=\mfrakL(a)^{-1}$ and $\mfrakH(a^{-1})=\mfrakH(a)^{-1}$ for each $a \in G$.
\end{enumerate}
\label{defn:pair}
\end{defn}
\vpc

Now we give the relation between Rota-Baxter operators with pair-weights and Rota-Baxter operators with weight $1$.

Let $M(G)$ denote the set of maps from $G$ to itself. It is a unitary semigroup with respect to the composition. For a fixed map $\alpha\in M(G)$, the precomposition with $\alpha$ defines a map
\vpa
\begin{equation*}
\eta_\alpha: M(G)\to M(G), \quad \beta\mapsto \beta\alpha,\ \  \beta\in M(G).
\label{eq:precomp}
\vpa
\end{equation*}
For a $(\mfrakL,\mfrakH)$-group $G$, since $\eta_\mfrakH\,\eta_\mfrakL=\eta_{\mfrakL\mfrakH}=\eta_{\id_G}=\id_{M(G)}$, the map $\eta_\mfrakL$ is injective and the map $\eta_\mfrakH$ is surjective.
Let $\rbo_1$ (resp. $\rbo_{(\mfrakL,\mfrakH)}$) denote the set of Rota-Baxter operators with weight $1$ (resp. pair-weight $(\mfrakL,\mfrakH)$) on $G$.

\begin{prop} \label{pp:pairmap}
	Let $G$ be a $(\mfrakL,\mfrakH)$-group.
\begin{enumerate}
\item \label{it:pairmap4a}
The injection $\eta_\mfrakL$ restricts to a bijection
\vpa
$$\eta_\mfrakL:\rbo_1 \to \rbo_{(\mfrakL,\mfrakH)}\cap \eta_\mfrakL(M(G)),\quad \frakB\mapsto \frakB\mfrakL;
\vpa$$
\item
\label{it:pairmap5}
The surjection $\eta_\mfrakH$ restricts to a map $\eta_\mfrakH: \rbo_{(\mfrakL,\mfrakH)} \to \rbo_1, \frakB\mapsto \frakB\mfrakH$;
\label{it:pairmap4}
\item When $(\mfrakL,\mfrakH)$ is bijective, the map $\eta_\mfrakL$ gives a bijection between $\rbo_{(\mfrakL,\mfrakH)}$ and $\rbo_1$.
\end{enumerate}	
\end{prop}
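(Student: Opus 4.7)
The plan is to exploit the single structural identity $\mfrakL\mfrakH=\id_G$ and see how it collapses the pair-weight RB identity into the weight-$1$ identity under the substitutions $\frakB \mapsto \frakB\mfrakL$ (right-composition with $\mfrakL$) and $\tilde{\frakB}\mapsto \tilde{\frakB}\mfrakH$ (right-composition with $\mfrakH$). The whole argument rests on two symmetric direct calculations, after which (a) and (c) are bookkeeping about the image of $\eta_\mfrakL$.

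First I would prove (b) directly, since it is the cleaner half. Given $\tilde{\frakB}\in \rbo_{(\mfrakL,\mfrakH)}$ I set $\frakB:=\tilde{\frakB}\mfrakH$ and plug $a=\mfrakH(c)$, $b=\mfrakH(d)$ into Eq.~\eqref{eq:rbgw} for $\tilde{\frakB}$. Each inner occurrence $\mfrakL\mfrakH(c)$ and $\mfrakL\mfrakH(d)$ collapses to $c$ and $d$, and the outer $\mfrakH$ re-packages into $\frakB$, producing exactly $\frakB(c)\frakB(d)=\frakB(c\frakB(c)d\frakB(c)^{-1})$, which is Eq.~\eqref{eq:rbog}. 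This shows $\eta_\mfrakH$ maps $\rbo_{(\mfrakL,\mfrakH)}$ into $\rbo_1$.

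For (a) I would run the mirror-image calculation. For $\frakB\in \rbo_1$ and $\tilde{\frakB}:=\frakB\mfrakL$, substituting $\tilde{\frakB}(a)=\frakB(\mfrakL(a))$ into the right-hand side of Eq.~\eqref{eq:rbgw} and simplifying with $\mfrakL\mfrakH=\id_G$ reduces it to the weight-$1$ identity for $\frakB$ evaluated at $\mfrakL(a),\mfrakL(b)$; hence $\tilde{\frakB}\in \rbo_{(\mfrakL,\mfrakH)}$ and of course $\tilde{\frakB}\in\eta_\mfrakL(M(G))$. Injectivity of $\eta_\mfrakL$ on $\rbo_1$ is immediate from the left inverse $\eta_\mfrakH$ noted just before the statement. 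For surjectivity onto the restricted target, observe that the condition $\tilde{\frakB}\in\eta_\mfrakL(M(G))$ is equivalent to $\tilde{\frakB}\mfrakH\mfrakL=\tilde{\frakB}$ (apply $\eta_\mfrakH$ and then $\eta_\mfrakL$); thus for any $\tilde{\frakB}\in\rbo_{(\mfrakL,\mfrakH)}\cap\eta_\mfrakL(M(G))$, the element $\frakB:=\tilde{\frakB}\mfrakH$ lies in $\rbo_1$ by (b) and satisfies $\eta_\mfrakL(\frakB)=\tilde{\frakB}\mfrakH\mfrakL=\tilde{\frakB}$.

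Finally, (c) drops out: if $(\mfrakL,\mfrakH)$ is bijective then $\mfrakH\mfrakL=\id_G$ as well, so every $\beta\in M(G)$ factors as $\beta=(\beta\mfrakH)\mfrakL\in\eta_\mfrakL(M(G))$, i.e.\ $\eta_\mfrakL(M(G))=M(G)$, and the bijection of (a) becomes a bijection $\rbo_1\leftrightarrow \rbo_{(\mfrakL,\mfrakH)}$. I do not expect any serious obstacle; the only thing to be careful about is the convention $\eta_\alpha(\beta)=\beta\alpha$, which makes $\eta_\mfrakH\eta_\mfrakL=\eta_{\mfrakL\mfrakH}=\id$ (not the other way around), and hence correctly identifies $\eta_\mfrakL$ as the injection and $\eta_\mfrakH$ as the surjection, so that the asymmetry between parts (a) and (b)—a bijection onto a proper subset versus merely a map—appears naturally rather than as an oversight.
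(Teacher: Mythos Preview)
Your proposal is correct and follows essentially the same approach as the paper: both rest on the two substitution calculations $a=\mfrakL(a'),b=\mfrakL(b')$ and $a=\mfrakH(c),b=\mfrakH(d)$ together with $\mfrakL\mfrakH=\id_G$. The one organizational difference is that you prove (b) first and then recycle it for the surjectivity half of (a) via the characterization $\tilde{\frakB}\in\eta_\mfrakL(M(G))\Leftrightarrow\tilde{\frakB}\mfrakH\mfrakL=\tilde{\frakB}$, whereas the paper proves (a) directly (writing $\beta=\frakB\mfrakL$ and using surjectivity of $\mfrakL$ to pull back the pair-weight identity to the weight-$1$ identity for $\frakB$); your route is slightly more economical but not materially different.
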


\begin{proof}
\eqref{it:pairmap4a}
First let $\frakB:G\to G$ be a Rota-Baxter operator with weight $1$. Hence
\vpa
\begin{equation}
\frakB(a)\frakB(b)=\frakB(a \frakB(a)b \frakB(a)^{-1}), \quad a, b\in G.
\label{eq:pairmap1a}
\vpa
\end{equation}
For $a', b'\in G$, taking $a=\mfrakL(a')$ and $b=\mfrakL(b')$ in Eq.~\eqref{eq:pairmap1a} and applying $\mfrakL\mfrakH=\id_G$, we obtain
\vpa
\begin{equation}
\frakB\mfrakL(a')\frakB\mfrakL(b')=\frakB\mfrakL\mfrakH\Big(\mfrakL(a') \frakB\mfrakL(a')\mfrakL(b') \frakB\mfrakL(a')^{-1}\Big), \quad a', b'\in G.
\label{eq:pairmap1b}
\vpa
\end{equation}
This shows that $\frakB\mfrakL$ is a Rota-Baxter operator of pair-weight $(\mfrakL,\mfrakH)$.
Thus we have obtained an injection $\eta_\mfrakL:\rbo_1\to \rbo_{(\mfrakL,\mfrakH)}\cap \eta_{\mfrakL}(M(G))$.

Conversely, suppose that a map $\beta\in M(G)$ is in $\rbo_{(\mfrakL,\mfrakH)}\cap \eta_{\mfrakL}(M(G))$.
Then $\beta=\frakB\mfrakL$ for some $\frakB\in \eta_{\mfrakL}(M(G))$ and is a Rota-Baxter operator of pair-weight $(\mfrakL,\mfrakH)$. Then Eq.~\eqref{eq:pairmap1b} holds for $\beta=\frakB\mfrakL$. Hence Eq.~\eqref{eq:pairmap1a} holds for all $a=\mfrakL(a'), b=\mfrakH(b')\in \mfrakL(G)$. Since $\mfrakL$ is surjective, the identity holds for all $a, b\in G$, showing that $\frakB$ is a Rota-Baxter operator with weight $1$.

\eqref{it:pairmap4}
Let $\frakB:G\to G$ be a Rota-Baxter operator with pair-weight $(\mfrakL,\mfrakH)$. Then
\vpa
\begin{equation}
	\mathfrak{B}(a) \mathfrak{B}(b) =  \mathfrak{B}\mfrakH\Big( \mfrakL(a) \mathfrak{B}(a) \mfrakL(b) \mathfrak{B}(a)^{-1} \Big), \quad a,b\in \mfrakH(G).
	\label{eq:pairmap2b}
\vpa
\end{equation}
Now for $a', b'\in G$, take $a=\mfrakH(a'), b=\mfrakH(b')$ and so $\mfrakL(a)=a', \mfrakL(b)=b'$. Then Eq.~\eqref{eq:pairmap2b} becomes
\vpa
\begin{equation} \mathfrak{B}\mfrakH(a') \mathfrak{B}\mfrakH(b') =  \mathfrak{B}\mfrakH\Big( \mfrakL(a') \mathfrak{B}\mfrakH(a') \mfrakL(b') \mathfrak{B}\mfrakH(a')^{-1} \Big).
	\label{eq:pairmap2}
\vpa
\end{equation}
Hence $\frakB\mfrakH$ is a Rota-Baxter operator of weight $(\mfrakL,\mfrakH)$.
	
\eqref{it:pairmap5}
Under the assumption that $(\mfrakL,\mfrakH)$ is bijective, the two maps in Items~\eqref{it:pairmap4a} and \eqref{it:pairmap4} are the inverses of each other. Hence $\eta_\mfrakL:\rbo_1 \to \rbo_{(\mfrakL,\mfrakH)}$ is a bijection.
\end{proof}

\begin{remark}
\begin{enumerate}
\item For a group $G$, the set of Rota-Baxter operators with weight $1$ is recovered by the set of Rota-Baxter operators with pair-weight $(\mfrakL,\mfrakH)$ when $\mfrakL=\mfrakH=\id_G$. On the other hand, not every Rota-Baxter operator with pair-weight $(\mfrakL,\mfrakH)$ is one with weight $1$.
\item
When the pair $(\mfrakL,\mfrakH)$ is bijective, then by Proposition~\ref{pp:pairmap}, there is a bijection between the set of Rota-Baxter operators with pair-weight $(\mfrakL,\mfrakH)$ and the set of Rota-Baxter operators with weight $1$. In this sense, the study of a Rota-Baxter operator with pair-weight $(\mfrakL,\mfrakH)$ is equivalent to a Rota-Baxter operator with weight $1$. This is the case if $G$ is finite, since then an injection or a surjection is also a bijection. It is also the case considered in~\cite{BN}. We will focus on the case when the pair is not bijective. 	
\end{enumerate}
\end{remark}

We give an example of a Rota-Baxter group with pair-weight that is not a Rota-Baxter group with weight $1$, in which the maps $\mfrakL$ and $\mfrakH$ are group homomorphisms.

\begin{exam} Let $G$ be a group and let $G^\infty$ be the Cartisian product of countably many copies of $G$. Then with the componentwise product, $G^\infty$ is also a group. Note that we have $G^\infty \cong G\times G^\infty$, from which we write $G^\infty=G\times G'$ with $G'=G^\infty$. Define $\mfrakL:G^\infty\to G^\infty$ to be the projection to $G'$:
	$$ \mfrakL:G^\infty =G\times G' \to G' =G^\infty, \quad (a,w)\mapsto w, \quad a\in G, w\in G'=G^\infty.$$
Also define $\mfrakH: G^\infty \to G^\infty$ to be the inclusion of $G^\infty=G'$ into $G^\infty=G\times G'$:
	$$\mfrakH: G^\infty \to G' \subseteq G\times G', w\mapsto (e,w), \quad w\in G^\infty.$$
Then $\mfrakL\mfrakH=\id_G$ and so $G$ is a $(\mfrakL,\mfrakH)$-group.
	
Define
$\frakB: G^\infty \to G^\infty,\,  (a,w)\mapsto w^{-1}.$
	Then $\frakB\mfrakH(w)=\frakB(e,w)=w^{-1}$ and hence $\frakB\mfrakH$ is a Rota-Baxter operator with weight $1$ on $G^{\infty}$~\cite{GLS}. Proposition~\ref{pp:pairmap}~\eqref{it:pairmap4} then suggests that $\frakB$ is a Rota-Baxter operator with pair-weight $(\mfrakL,\mfrakH)$.
	
To verify this, for $(a,w), (b,u)\in G^\infty$ with $a, b\in G$ and $w,u\in G'$, we have
	\begin{equation}
		\frakB(a,w)\frakB(b,u)=w^{-1}u^{-1},
		\label{eq:cexamlhs}
	\end{equation}
	and
	$$
	\frakB\Big(\mfrakH(\mfrakL(a,w)\frakB(a,w)\mfrakL(b,u)\frakB(a,w)^{-1})\Big)
	= \frakB(\mfrakH( w w^{-1} u w))
	=\frakB(\mfrakH(uw))
	=\frakB(e,uw)=(uw)^{-1}.$$
	This agrees with Eq.~\eqref{eq:cexamlhs} and verifies Eq.~\eqref{eq:rbgw}, showing that $\frakB$ is indeed a Rota-Baxter operator with  pair-weight $(\mfrakL,\mfrakH)$.
	
However, in order for $\frakB$ to be a Rota-Baxter operator with weight $1$, we need to check Eq.~\eqref{eq:rbog}.
Let $(a,w)=(a,a',w'), (b,u)=(b,b',u')$ with $a,a',b,b'\in G$ and $w',u'\in G^\infty$. Then the right hand side of Eq.~\eqref{eq:rbog} for the Rota-Baxter relation with weight $1$ is
\begin{align*}
&\ \frakB\big((a,w)\frakB(a,w)(b,u)\frakB(a,w)^{-1}\big)
=\frakB\big((a,w)((a')^{-1},(w')^{-1})(b,u)(a',w')\big)\\
=&\ \frakB(a(a')^{-1}ba',w(w')^{-1}uw')
=(w(w')^{-1}uw')^{-1}=(w')^{-1}u^{-1}w'w^{-1}.
\end{align*}
This does not agree with Eq.~\eqref{eq:cexamlhs} when $G$ is not abelian. Hence $\frakB$ is not a Rota-Baxter operator with weight $1$.
\end{exam}

We also define Rota-Baxter Lie algebras with weights given by a pair of linear maps.
\begin{defn}
Let $\frakg$ be a Lie algebra and $\MfrakL,\MfrakH, B:\frakg \rightarrow \frakg$ be linear maps.
\begin{enumerate}
\item We call $(\frakg,(\MfrakL,\MfrakH))$ a {\bf $(\MfrakL,\MfrakH)$-Lie algebra} if $\MfrakL \MfrakH= \id_\frakg$.
\item A linear operator $B$ on a $(\phi,\psi)$-Lie algebra $\frakg$ is called a {\bf Rota-Baxter operator with pair-weight $(\MfrakL,\MfrakH)$}
if
\begin{equation}
[B(u),B(v)] =  B\Big(\MfrakH\big([B(u),\MfrakL(v)]+[\MfrakL(u),B(v)]+[\MfrakL(u),\MfrakL(v)]\big) \Big), \quad  u,v\in \frakg.
\label{eq:algpair}
\end{equation}
Then the triple $(\frakg,(\MfrakL,\MfrakH),B)$ is called a {\bf Rota-Baxter Lie algebra with pair-weight $(\MfrakL,\MfrakH)$}.  \end{enumerate}
\label{defn:wrbliealg}
\end{defn}

\begin{remark}
 Let $\lambda\in {\bfk} \backslash \{0\}$, $\MfrakL :=\lambda\, \id_\frakg$ and $\MfrakH:=\frac{1}{\lambda} \id_\frakg$. Then a Rota-Baxter Lie algebra with weight $(\MfrakL,\MfrakH)$ is reduced to the usual Rota-Baxter Lie algebra with weight $\lambda$.
\label{rk:weight lambda}
\end{remark}

We next establish a relation between Rota-Baxter groups with pair-weights and Rota-Baxter Lie algebras with pair-weights, beginning with a lemma.

\begin{lemma}
Let $G$ be a $(\mfrakL,\mfrakH)$-Lie group with $(\mfrakL,\mfrakH)$ a unital pair. Let $\frakg = T_e G$ be the Lie algebra of $G$ and $\mfrakL_{*e},\mfrakH_{*e}:\frakg\to \frakg$ the tangent maps of $\mfrakL$ and $\mfrakH$ at the identity $e$ respectively. Then $\frakg$ is a $(\mfrakL_{*e},\mfrakH_{*e})$-Lie algebra.
\label{lemma:idl}
\end{lemma}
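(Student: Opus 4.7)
The plan is to invoke functoriality of the tangent functor at the identity. The unitality of the pair $(\mfrakL,\mfrakH)$ guarantees $\mfrakL(e)=\mfrakH(e)=e$, so the differentials $\mfrakL_{*e}$ and $\mfrakH_{*e}$ are genuinely linear endomorphisms of the vector space $\frakg=T_eG$. Smoothness of $\mfrakL,\mfrakH$ comes free from the hypothesis that $G$ is a $(\mfrakL,\mfrakH)$-Lie group.

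Next I apply the chain rule to the composition $\mfrakL\mfrakH:G\to G$, evaluated at $e$. Since $\mfrakH(e)=e$, the chain rule gives
\[
(\mfrakL\mfrakH)_{*e}=\mfrakL_{*\mfrakH(e)}\circ \mfrakH_{*e}=\mfrakL_{*e}\circ \mfrakH_{*e}.
\]
On the other hand, the $(\mfrakL,\mfrakH)$-group axiom from Definition~\ref{defn:lhrbg}~\eqref{it:lhrbg1} forces $\mfrakL\mfrakH=\id_G$, and clearly $(\id_G)_{*e}=\id_{T_eG}=\id_\frakg$. Combining these two equalities yields $\mfrakL_{*e}\circ\mfrakH_{*e}=\id_\frakg$, which is exactly the defining condition for $\frakg$ to be a $(\mfrakL_{*e},\mfrakH_{*e})$-Lie algebra.

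There is essentially no obstacle here: the argument is a single chain-rule computation. The only point worth flagging is that unitality of the pair is strictly necessary, because without $\mfrakH(e)=e$ the tangent $\mfrakL_{*\mfrakH(e)}$ lives in $\mathrm{Hom}(T_{\mfrakH(e)}G,T_eG)$ rather than in $\mathrm{End}(\frakg)$, and the composition $\mfrakL_{*e}\circ\mfrakH_{*e}$ would not even be defined as an endomorphism of $\frakg$. This is where the hypothesis is used decisively.
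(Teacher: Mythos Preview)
Your proof is correct and is essentially the same as the paper's: both arguments amount to applying the chain rule to $\mfrakL\mfrakH=\id_G$ at $e$, the paper merely making this explicit via the curve $t\mapsto e^{tu}$ rather than invoking functoriality of the tangent map abstractly. Your remark on why unitality is needed is accurate and a useful clarification.
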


\begin{proof}
For each $u\in \frakg$,  we have
\begin{align*}
u&=\frac{\mathrm{d}}{\mathrm{d}t }\bigg|_{t=0} e^{tu}
=\frac{\mathrm{d}}{\mathrm{d}t }\bigg|_{t=0} \mfrakL\mfrakH\Big(e^{tu}\Big)
=\mfrakL_{*e}\bigg(\frac{\mathrm{d}}{\mathrm{d}t }\bigg|_{t=0} \mfrakH\Big(e^{tu}\Big)\bigg)
=\mfrakL_{*e}\mfrakH_{*e}\Big(\frac{\mathrm{d}}{\mathrm{d}t }\bigg|_{t=0} e^{tu}\Big)
=\mfrakL_{*e}\mfrakH_{*e}(u),
\end{align*}
and so $ \mfrakL_{*e}\mfrakH_{*e}=\id_\frakg$.
\end{proof}

\begin{theorem}
Let $(G, \frakB)$ be a Rota-Baxter Lie group with pair-weight $(\mfrakL,\mfrakH)$ where $(\mfrakL,\mfrakH)$ is unital, and let $\frakg = T_e G$ be the Lie algebra of $G$. Let the linear operators
$B :=\frakB_{*e},\, \MfrakL :=\mfrakL_{*e},\, \MfrakH :=\mfrakH_{*e}$
on $\frakg$ be the tangent maps of $\frakB$, $\mfrakL$ and $\mfrakH$ at the identity $e$, respectively. Then $(\frakg, B)$ is a Rota-Baxter Lie algebra with pair-weight $(\MfrakL,\MfrakH)$.
\label{thm:tgop}
\end{theorem}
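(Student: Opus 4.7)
The plan is to reduce the pair-weighted tangent statement to the already established weight-one case of \cite[Theorem~2.10]{GLS} by passing to the composition $\tilde\frakB:=\frakB\mfrakH$, and then to rewrite the resulting Lie algebra identity in terms of $B$, $\MfrakL$, and $\MfrakH$ using the bijectivity of $\MfrakH$ at the Lie algebra level.

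First I would check that $\frakB(e)=e$: plugging $a=b=e$ into \eqref{eq:rbgw} and invoking the unitality $\mfrakL(e)=\mfrakH(e)=e$ collapses the right-hand side to $\frakB(\mfrakH(e))=\frakB(e)$, giving $\frakB(e)^{2}=\frakB(e)$ and hence $\frakB(e)=e$. Combined with $\mfrakH(e)=e$, this guarantees that $\tilde\frakB$ fixes $e$, and by the chain rule $\tilde\frakB_{*e}=\frakB_{*e}\,\mfrakH_{*e}=B\MfrakH$.

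By Proposition~\ref{pp:pairmap}\eqref{it:pairmap5}, the map $\tilde\frakB=\frakB\mfrakH$ is a Rota-Baxter operator of weight $1$ on the group $G$; it is smooth as a composition of smooth maps, so $(G,\tilde\frakB)$ is a Rota-Baxter Lie group with weight $1$. Applying \cite[Theorem~2.10]{GLS} to this data yields the weight-$1$ Rota-Baxter identity for $\tilde B=B\MfrakH$ on $\frakg$,
\begin{equation*}
[B\MfrakH(u'),B\MfrakH(v')]=B\MfrakH\bigl([B\MfrakH(u'),v']+[u',B\MfrakH(v')]+[u',v']\bigr),\qquad u',v'\in\frakg.
\end{equation*}
By Lemma~\ref{lemma:idl} we have $\MfrakL\MfrakH=\id_\frakg$; since $\frakg$ is finite-dimensional this forces $\MfrakH$ to be bijective with inverse $\MfrakL$, and in particular $\MfrakH\MfrakL=\id_\frakg$. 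Substituting $u'=\MfrakL(u)$ and $v'=\MfrakL(v)$ into the displayed identity now produces exactly \eqref{eq:algpair} for arbitrary $u,v\in\frakg$, whence $(\frakg,B)$ is a Rota-Baxter Lie algebra with pair-weight $(\MfrakL,\MfrakH)$.

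The one genuine technical point is the step that lets us pass from the identity indexed by $u'\in\frakg$ to one indexed by $u=\MfrakH(u')$, which requires surjectivity of $\MfrakH$; this is supplied for free by finite dimensionality of $\frakg$ combined with Lemma~\ref{lemma:idl}. A more direct alternative would be to substitute $a=e^{su}$ and $b=e^{tv}$ into \eqref{eq:rbgw} and extract the mixed $\partial_s\partial_t$-coefficient at the origin via Baker-Campbell-Hausdorff, but such a computation would require tracking the second-order jets of $\mfrakL$, $\mfrakH$, and $\frakB$ in exponential coordinates (whose quadratic parts are \emph{a priori} unconstrained), which is precisely what the reduction to the weight-one case sidesteps.
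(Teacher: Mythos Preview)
Your proof is correct and takes a genuinely different route from the paper's. The paper proves the identity directly: it writes $[B(u),B(v)]=\frac{d^2}{dt\,ds}\big|_{t,s=0}\frakB(e^{tu})\frakB(e^{sv})\frakB(e^{-tu})$, applies the group identity \eqref{eq:rbgw} twice, and reads off the three bracket terms. You instead reduce to the weight-one theorem of \cite{GLS} via the composition $\tilde\frakB=\frakB\mfrakH$, and then undo the composition at the Lie algebra level using $\MfrakH\MfrakL=\id_\frakg$.

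Your approach is slicker, but it genuinely needs the extra ingredient $\MfrakH\MfrakL=\id_\frakg$, which you obtain from finite-dimensionality of $\frakg$. The paper's direct computation never needs $\MfrakH$ to be invertible on $\frakg$, only $\MfrakL\MfrakH=\id_\frakg$ (via Lemma~\ref{lemma:idl}), so it goes through verbatim for infinite-dimensional Lie groups where $\MfrakH$ may be a strict injection. Your closing remark, that the direct route would require second-order jets of $\mfrakL$, $\mfrakH$, $\frakB$, is not accurate: as the paper's computation shows, the mixed derivative $\partial_t\partial_s|_{t=s=0}$ of the relevant products of curves through $e$ depends only on their first-order data, and the second-order terms of the individual curves never enter the $ts$-coefficient. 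So the direct computation is not being sidestepped for the reason you suggest; it is simply a different, equally valid proof.
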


\begin{proof}
Since $\frakg$ is a $(\MfrakL,\MfrakH)$-Lie algebra by Lemma~\ref{lemma:idl}, we just need to verify Eq.~\eqref{eq:algpair} as follows.
{\small
\begin{align*}
\left [ B(u), B(v) \right ]
=&\  \frac{\mathrm{d}^2}{\mathrm{d}t \mathrm{d}s}\bigg|_{t,s=0} e^{tB(u)}e^{sB(v)}e^{-tB(u)}\\
=&\  \frac{\mathrm{d}^2}{\mathrm{d}t \mathrm{d}s}\bigg|_{t,s=0} \frakB(e^{tu}) \frakB(e^{sv}) \frakB(e^{-tu})\\
=&\  \frac{\mathrm{d}^2}{\mathrm{d}t \mathrm{d}s}\bigg|_{t,s=0} \frakB\bigg( \mfrakH\Big( \mfrakL(e^{tu}) \frakB(e^{tu}) \mfrakL(e^{sv}) \frakB(e^{tu})^{-1} \Big) \bigg)\frakB(e^{-tu})\\
=&\  \frac{\mathrm{d}^2}{\mathrm{d}t \mathrm{d}s}\bigg|_{t,s=0} \frakB\mfrakH \Big( \mfrakL(e^{tu}) \frakB(e^{tu}) \mfrakL(e^{sv}) \frakB(e^{sv}) \mfrakL(e^{-tu}) \frakB(e^{sv})^{-1} \frakB(e^{tu})^{-1}\Big)\\
=&\  \frakB_{*e}\mfrakH_{*e}\Bigg(\frac{\mathrm{d}^2}{\mathrm{d}t \mathrm{d}s}\bigg|_{t,s=0}\frakB(e^{tu})\mfrakL(e^{sv})\frakB(e^{tu})^{-1}+\frac{\mathrm{d}^2}{\mathrm{d}t \mathrm{d}s}\bigg|_{t,s=0}\frakB(e^{sv})\mfrakL(e^{-tu})\frakB(e^{sv})^{-1}\\ & \quad \quad \quad+\frac{\mathrm{d}^2}{\mathrm{d}t \mathrm{d}s}\bigg|_{t,s=0}\mfrakL(e^{tu})\mfrakL(e^{sv})\mfrakL(e^{-tu}) \Bigg)\\
=&\    B\bigg(\MfrakH\Big([B(u),\MfrakL(v)]+[\MfrakL(u),B(v)]+[\MfrakL(u),\MfrakL(v)]\Big) \bigg). \qedhere
\end{align*}
}
\end{proof}

We give more examples of Rota-Baxter groups with pair-weights.

\begin{prop}
Let $(G, (\mfrakL,\mfrakH))$ be a $(\mfrakL,\mfrakH)$-group, $G_{+}$ and $G_{-}$ be two subgroups with $G$ such that $G = G_{+}G_{-}$ and $G_{+}\cap G_{-} = \{ e\}$. For $a\in G$, denote $\mfrakL(a)  =\mfrakL(a)_{+}\mfrakL(a)_{-}$ with $\mfrakL(a)_{+}\in G_{+}$ and $\mfrakL(a)_{-}\in G_{-}$. For the operator
$$\frakB: G \rightarrow G, \quad a \mapsto (\mfrakL(a)_{-})^{-1},
$$
the pair $(G,\frakB)$ is a Rota-Baxter group with pair-weight $(\mfrakL, \mfrakH)$.
\label{Prop:dkrb1}
\end{prop}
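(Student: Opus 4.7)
The plan is to verify the pair-weight Rota-Baxter identity \eqref{eq:rbgw} by direct computation, exploiting the uniqueness of the factorization $G=G_+G_-$ together with $\mfrakL\mfrakH=\id_G$. Fix $a,b\in G$ and write the decompositions $\mfrakL(a)=\mfrakL(a)_+\mfrakL(a)_-$ and $\mfrakL(b)=\mfrakL(b)_+\mfrakL(b)_-$, so that by the very definition of $\frakB$,
\[
\frakB(a)=\mfrakL(a)_-^{-1},\qquad \frakB(b)=\mfrakL(b)_-^{-1}.
\]
The left hand side of \eqref{eq:rbgw} is therefore simply
\[
\frakB(a)\frakB(b)=\mfrakL(a)_-^{-1}\mfrakL(b)_-^{-1}.
\]

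First I would simplify the argument of $\frakB\mfrakH(\cdot)$ on the right hand side. The key observation is the telescoping identity $\mfrakL(a)\frakB(a)=\mfrakL(a)_+\mfrakL(a)_-\cdot\mfrakL(a)_-^{-1}=\mfrakL(a)_+$, so
\[
\mfrakL(a)\frakB(a)\mfrakL(b)\frakB(a)^{-1}=\mfrakL(a)_+\,\mfrakL(b)_+\mfrakL(b)_-\,\mfrakL(a)_-=\bigl(\mfrakL(a)_+\mfrakL(b)_+\bigr)\bigl(\mfrakL(b)_-\mfrakL(a)_-\bigr).
\]
Since $G_+$ and $G_-$ are subgroups, $\mfrakL(a)_+\mfrakL(b)_+\in G_+$ and $\mfrakL(b)_-\mfrakL(a)_-\in G_-$, so this is already the $G_+G_-$-decomposition of the displayed element.

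The remaining step uses $\mfrakL\mfrakH=\id_G$: setting $y:=\mfrakH\bigl(\mfrakL(a)\frakB(a)\mfrakL(b)\frakB(a)^{-1}\bigr)$, we get $\mfrakL(y)=\mfrakL(a)\frakB(a)\mfrakL(b)\frakB(a)^{-1}$, whose unique $G_+G_-$-decomposition was just computed. Hence $\mfrakL(y)_-=\mfrakL(b)_-\mfrakL(a)_-$, and
\[
\frakB(y)=\mfrakL(y)_-^{-1}=\bigl(\mfrakL(b)_-\mfrakL(a)_-\bigr)^{-1}=\mfrakL(a)_-^{-1}\mfrakL(b)_-^{-1},
\]
which matches the left hand side. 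I do not anticipate a genuine obstacle here; the only mild subtlety is recognizing that $\mfrakL\mfrakH=\id_G$ suffices (one does not need $\mfrakH$ to be a homomorphism or even to respect the decomposition), since the factorization is recovered after applying $\mfrakL$ to $\mfrakH$ of the element. This is essentially the same computation that underlies the weight-$1$ case of Atkinson's factorization, now perturbed through the section $\mfrakH$.
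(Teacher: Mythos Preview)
Your proof is correct and follows essentially the same approach as the paper's: both simplify $\mfrakL(a)\frakB(a)=\mfrakL(a)_+$, obtain the $G_+G_-$-factorization $(\mfrakL(a)_+\mfrakL(b)_+)(\mfrakL(b)_-\mfrakL(a)_-)$, and then use $\mfrakL\mfrakH=\id_G$ together with uniqueness of the factorization to conclude. Your added remark that only $\mfrakL\mfrakH=\id_G$ is needed (not any compatibility of $\mfrakH$ with the decomposition) is a worthwhile clarification but does not depart from the paper's argument.
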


\begin{proof}
Let $\mfrakL(a)=\mfrakL(a)_{+}\mfrakL(a)_{-}$ and $\mfrakL(b)=\mfrakL(b)_{+}\mfrakL(b)_{-}$ be two elements in $G$ with $\mfrakL(a)_{+},\mfrakL(b)_{+} \in G_{+}$ and $\mfrakL(a)_{-},\mfrakL(b)_{-} \in G_{-}$. Then
\begin{align*}
\frakB\bigg(\mfrakH\Big(\mfrakL(a)\frakB(a)\mfrakL(b)\frakB(a)^{-1}\Big)\bigg) = &\
\frakB\bigg(\mfrakH\Big(\mfrakL(a)_{+} \mfrakL(a)_{-} (\mfrakL(a)_{-})^{-1}\mfrakL(b)_{+}
\mfrakL(b)_{-}\mfrakL(a)_{-}\Big)\bigg)\\
= &\ \frakB\bigg(\mfrakH\Big(\mfrakL(a)_{+}\mfrakL(b)_{+}\mfrakL(b)_{-}\mfrakL(a)_{-}\Big)\bigg)\\
=&\ (\mfrakL(b)_{-}\mfrakL(a)_{-})^{-1} \hspace{1cm} \text{(by the definition of $\frakB$ and $\mfrakL\mfrakH = \id_G$)}\\
=&\   (\mfrakL(a)_{-})^{-1} (\mfrakL(b)_{-})^{-1} \\
=&\  \frakB(a)\frakB(b). \qedhere
\end{align*}
\end{proof}

As in the case of Rota-Baxter algebras~\cite[Proposition 2.5]{GLS}, a pair-weighted Rota-Baxter operator on a group $G$
induces another Rota-Baxter operator on $G$ of the same pair-weight.

\begin{prop}
Let $(G, \frakB)$ be a Rota-Baxter group with pair-weight $(\mfrakL,\mfrakH)$, where
$(\mfrakL,\mfrakH)$ is a pair of inverse preserving maps. Define
\begin{equation}
\widetilde{\frakB}:G\rightarrow G, \quad a\mapsto \mfrakL(a^{-1})\mathfrak{B}(a^{-1}).
	\label{eq:trb0}
\end{equation}
Then $(G, \widetilde{\frakB})$ is a Rota-Baxter group with pair-weight $(\mfrakL,\mfrakH)$.
\label{pp:trb}
\end{prop}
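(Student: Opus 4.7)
The plan is to verify the pair-weighted Rota-Baxter identity~\eqref{eq:rbgw} for $\widetilde{\frakB}$ by a direct computation that reduces to the identity~\eqref{eq:rbgw} for $\frakB$ evaluated at the pair $(a^{-1},b^{-1})$. The inverse-preserving hypothesis on $(\mfrakL,\mfrakH)$ is precisely what is needed to align the argument of $\mfrakH$ on the $\widetilde{\frakB}$-side with that on the $\frakB$-side after the substitution $a\mapsto a^{-1}$, $b\mapsto b^{-1}$.

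First, expanding the left-hand side using~\eqref{eq:trb0} yields
$$\widetilde{\frakB}(a)\widetilde{\frakB}(b)=\mfrakL(a^{-1})\frakB(a^{-1})\mfrakL(b^{-1})\frakB(b^{-1}).$$
For the right-hand side, using $\mfrakL(a^{-1})=\mfrakL(a)^{-1}$ gives $\mfrakL(a)\widetilde{\frakB}(a)=\frakB(a^{-1})$ and $\widetilde{\frakB}(a)^{-1}=\frakB(a^{-1})^{-1}\mfrakL(a)$, so, setting $w:=\mfrakL(a)\widetilde{\frakB}(a)\mfrakL(b)\widetilde{\frakB}(a)^{-1}$, one obtains
$$w=\frakB(a^{-1})\mfrakL(b)\frakB(a^{-1})^{-1}\mfrakL(a),\qquad w^{-1}=\mfrakL(a^{-1})\frakB(a^{-1})\mfrakL(b^{-1})\frakB(a^{-1})^{-1},$$
and the latter is exactly the argument of $\mfrakH$ in the pair-weighted Rota-Baxter identity for $\frakB$ at $(a^{-1},b^{-1})$.

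Now put $c:=\mfrakH(w)$. Inverse preservation of $\mfrakH$ gives $c^{-1}=\mfrakH(w^{-1})$, so $\mfrakL\mfrakH=\id_G$ yields $\mfrakL(c^{-1})=w^{-1}$, while identity~\eqref{eq:rbgw} for $\frakB$ evaluated at $(a^{-1},b^{-1})$ yields $\frakB(c^{-1})=\frakB(a^{-1})\frakB(b^{-1})$. Substituting these two facts into $\widetilde{\frakB}(c)=\mfrakL(c^{-1})\frakB(c^{-1})$, the inner cluster $\frakB(a^{-1})^{-1}\frakB(a^{-1})$ cancels, and the product collapses precisely to $\widetilde{\frakB}(a)\widetilde{\frakB}(b)$, proving~\eqref{eq:rbgw} for $\widetilde{\frakB}$. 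There is no genuine obstacle in the argument; what requires care is the bookkeeping of inverses and the two separate invocations of the inverse-preserving hypothesis (once on $\mfrakL$ to bring $w^{-1}$ into the exact form demanded by~\eqref{eq:rbgw}, and once on $\mfrakH$ to relate $c^{-1}$ to $\mfrakH(w^{-1})$).
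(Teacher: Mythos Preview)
Your proof is correct and follows essentially the same approach as the paper: both arguments reduce the identity for $\widetilde{\frakB}$ to the Rota-Baxter identity~\eqref{eq:rbgw} for $\frakB$ at $(a^{-1},b^{-1})$, using $\mfrakL\mfrakH=\id_G$ together with the inverse-preserving property of $\mfrakL$ and $\mfrakH$. The paper computes forward from $\widetilde{\frakB}(a)\widetilde{\frakB}(b)$ by inserting $\frakB(a^{-1})^{-1}\frakB(a^{-1})$ and then manipulating toward the right-hand side, whereas you introduce the intermediate variables $w$ and $c=\mfrakH(w)$ and verify the two sides meet; these are cosmetic differences in bookkeeping, not in substance.
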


\begin{proof}
The conclusion follows from
{\small
\begin{align*}	
&\ \widetilde{\frakB}(a)\widetilde{\frakB}(b)\\
=&\ \mfrakL(a^{-1})\frakB(a^{-1})\mfrakL(b^{-1})\frakB(b^{-1})= \mfrakL(a^{-1})\frakB(a^{-1})
\mfrakL(b^{-1})\frakB(a^{-1})^{-1}
\frakB(a^{-1})\frakB(b^{-1})\\
=&\ \mfrakL(a^{-1})\frakB(a^{-1})\mfrakL(b^{-1})
\frakB(a^{-1})^{-1}\frakB\Big(\mfrakH(\mfrakL(a^{-1})
\frakB(a^{-1})\mfrakL(b^{-1})\frakB(a^{-1})^{-1})\Big)\\
=&\ \mfrakL\bigg(\mfrakH\Big(\mfrakL(a^{-1})\frakB(a^{-1})\mfrakL(b^{-1})
\frakB(a^{-1})^{-1}\Big)\bigg)\frakB\bigg(\mfrakH(\mfrakL(a^{-1})
\frakB(a^{-1})\mfrakL(b^{-1})\frakB(a^{-1})^{-1})\bigg) \quad \text{(by $\mfrakL \mfrakH=\id_G$)}\\
=&\ \widetilde{\frakB}\bigg(\mfrakH\Big(\mfrakL(a^{-1})
\frakB(a^{-1})\mfrakL(b^{-1})\frakB(a^{-1})^{-1}
\Big)^{-1}\bigg) \hspace{1cm} \text{(by Eq.~(\ref{eq:trb0}))}\\
=&\ \widetilde{\frakB}\bigg(\mfrakH\Big(\mfrakL(a)^{-1}
\frakB(a^{-1})\mfrakL(b)^{-1}\frakB(a^{-1})^{-1}
\Big)^{-1}\bigg)  \hspace{1cm}\text{(by $\mfrakL$ being inverse preserving)}\\
=&\ \widetilde{\frakB}\bigg(\mfrakH\Big(
\frakB(a^{-1})\mfrakL(b)\frakB(a^{-1}
)^{-1}\mfrakL(a) \Big)\bigg) \hspace{1cm}\text{(by $\mfrakH$ being inverse preserving)}\\
=&\ \widetilde{\frakB}\bigg(\mfrakH\Big( \mfrakL(a)\mfrakL(a)^{-1}
\frakB(a^{-1}) \mfrakL(b) \frakB(a^{-1}
)^{-1}\mfrakL(a) \Big)\bigg)  \\
=&\ \widetilde{\frakB}\bigg(\mfrakH\Big(\mfrakL(a)\mfrakL(a^{-1})
\frakB(a^{-1})\mfrakL(b)\big(\mfrakL(a^{-1}) \frakB(a^{-1})\big)^{-1} \Big)\bigg)\\
=&\ \widetilde{\frakB}\bigg(\mfrakH\Big(\mfrakL(a)\widetilde{\frakB}
(a)\mfrakL(b)\widetilde{B}(a)^{-1}\Big)\bigg). \qedhere
\end{align*}
}
\end{proof}

\subsection{Limit-weighted Rota-Baxter groups and Rota-Baxter Lie algebras} \label{ss:rbgwl}
Our next notions are based on a condition where an iterated limit can be obtained by taking the limit in the diagonal direction. Since the limit in a topological space might not be unique, we will emphasize the uniqueness condition of a limit.

\begin{defn}
\label{defn:synchronized-limit}
Let $T$ be a topological space and $f_n: T\times T\rightarrow T, n\in \PP,$ be a sequence of maps such that $\lim\limits_{n\to \infty} f_n(a,b)$ uniquely exists for each $(a,b)\in T\times T$. We call the resulting limit function $\lim\limits_{n\to\infty}f_n:T\times T\to T$ {\bf \complim}
if, for any sequences $ a_{n},b_{n}, n\in \PP$ in $T$ with $\lim\limits_{n\to \infty}a_{n}= a$ and $\lim\limits_{n\to\infty} b_{n}=b$, we have
\begin{equation}
\label{eq:jx}
\lim_{n\to\infty}f_n(a,b)=\lim_{n\to\infty}f_n(\lim_{m\to \infty} a_m,\lim_{k\to \infty}b_k)=\lim_{n\to\infty}f_n(a_n,b_n).
\end{equation}
\end{defn}

\begin{exam}
Let $V$ be a linear metric space and $f_{n}:V\times V\rightarrow V$ be maps  uniformly converging to a continuous function $f$. Then $f$ is \complim. Indeed, for all $a_n,b_n\in V$ with $\lim\limits_{n\to\infty}a_n=a$ and $\lim\limits_{n\to\infty}b_n=b$, and for each given $ \epsilon > 0$, there exist $N_1,N_2\in \PP$, such that for any $ n \geq N_1$, there are 
    \begin{align*}
        \max_{(a',b')\in V\times V}|f(a',b')-f_n(a',b')|\leq \frac{\epsilon}{2},
    \end{align*}
    and, for any $ n \geq N_2$, there are 
    \begin{align*}
        |f(a,b)-f(a_n,b_n)|\leq \frac{\epsilon}{2}.
    \end{align*}
    So, for each given $ \epsilon > 0$ and $ n \geq \max\{N_1,N_2\}$, there are
    \begin{align*}
        |f(a,b)-f_n(a_n,b_n)|\leq |f(a,b)-f(a_n,b_n)| + |f(a_n,b_n)-f_n(a_n,b_n)|\leq \epsilon. 
    \end{align*}
    Thus we have $\lim\limits_{n\to\infty}f_n(a,b)=\lim\limits_{n\to\infty}f_n(a_n,b_n)$.
\end{exam}

We now present a limit version of the Rota-Baxter operators with pair-weights.

\begin{defn} \label{defn:lhnrb}
Let $(G,\cdot)$ be a topological group, and $\mfrakL_{\frac{1}{n}},\mfrakH_{\frac{1}{n}},\frakB:G\rightarrow G, n\in \PP$, be maps.
\begin{enumerate}
\item We call $\big(G,(\mfrakL_{\frac{1}{n}},\mfrakH_{\frac{1}{n}})\big)$  a  {\bf $\lim\limits_{n\to \infty}(\mfrakL_{\frac{1}{n}},\mfrakH_{\frac{1}{n}})$-group}
if $G$ is a $(\mfrakL_{\frac{1}{n}}, \mfrakH_{\frac{1}{n}})$-group for each $n\in \PP$, the limit $\lim\limits_{n\to\infty}\mfrakH_{\frac{1}{n}}\Big(\mfrakL_{\frac{1}{n}}(a)
\mfrakL_{\frac{1}{n}}(b)\Big)$ uniquely exists for each $(a,b)\in G\times G$, and the map $$\lim\limits_{n\to\infty}\mfrakH_{\frac{1}{n}}\Big(\mfrakL_{\frac{1}{n}}\cdot
\mfrakL_{\frac{1}{n}}\Big): G\times G \rightarrow G$$ is \complim.
\label{it:lhnrb1}

\item For a $\lim\limits_{n\to \infty}(\mfrakL_{\frac{1}{n}},\mfrakH_{\frac{1}{n}})$-group $G$, a map $\mathfrak{B}:G\to G$ is called a {\bf Rota-Baxter operator with limit-weight} $\lim\limits_{n\to \infty}(\mfrakL_{\frac{1}{n}},\mfrakH_{\frac{1}{n}})$ if for each $(a,b)\in G\times G$, the limit
$$\lim\limits_{n\to \infty}  \mfrakH_{\frac{1}{n}} \Big( \mathfrak{B}(a) \mfrakL_{\frac{1}{n}}(b) \mathfrak{B}(a)^{-1} \Big) $$
uniquely exists and there is the identity
\begin{equation}
\frakB(a) \frakB(b) =  \frakB \bigg(\lim\limits_{n\to \infty}  \mfrakH_{\frac{1}{n}} \Big( \mfrakL_{\frac{1}{n}}(a) \frakB(a) \mfrakL_{\frac{1}{n}}(b) \frakB(a)^{-1} \Big) \bigg).
\label{eq:limitwt}
\end{equation}
Then the group $G$ is called a {\bf Rota-Baxter group with limit-weight $\lim\limits_{n\to \infty}(\mfrakL_{\frac{1}{n}},\mfrakH_{\frac{1}{n}})$}.
\item For a Rota-Baxter group $(G,\frakB)$ with limit-weight $\lim\limits_{n\to \infty}(\mfrakL_{\frac{1}{n}},\mfrakH_{\frac{1}{n}})$, if in addition, the limit $\lim\limits_{n\to \infty} \mfrakL_{\frac{1}{n}}(a)$ uniquely exists and equals to $e$ for each $a\in G$,
then we call $\frakB$ (resp. $(G,\frakB)$) a {\bf Rota-Baxter operator} (resp. {\bf Rota-Baxter group}) {\bf with limit-weight zero}.
\item A $\lim\limits_{n\to\infty}(\mfrakL_{\frac{1}{n}},\mfrakH_{\frac{1}{n}})$-group $G$ is called a {\bf $\lim\limits_{n\to\infty}(\mfrakL_{\frac{1}{n}},\mfrakH_{\frac{1}{n}})$-Lie group} if  the group $G$ is a Lie group and $\mfrakL_{\frac{1}{n}}$ and $\mfrakH_{\frac{1}{n}}$ are smooth maps.
\item \label{it:lhnrb2}
A Rota-Baxter group with  limit-weight    $\lim\limits_{n\to\infty}(\mfrakL_{\frac{1}{n}},\mfrakH_{\frac{1}{n}})$ (resp. limit-weight zero) in which $G$ is a Lie group and $\mfrakL_{\frac{1}{n}},\mfrakH_{\frac{1}{n}},\frakB$ are smooth maps, is called a {\bf Rota-Baxter Lie group with limit-weight $\lim\limits_{n\to\infty}(\mfrakL_{\frac{1}{n}},\mfrakH_{\frac{1}{n}})$} (resp. {\bf limit-weight zero}).
\end{enumerate}
\end{defn}

\begin{remark}
\begin{enumerate}
\item The condition $\lim\limits_{n\to \infty}  \mfrakH_{\frac{1}{n}} \Big( \mathfrak{B}(a) \mfrakL_{\frac{1}{n}}(b) \mathfrak{B}(a)^{-1} \Big)$  implies that the limit 
\begin{align*}
\lim\limits_{n\to \infty}  \mfrakH_{\frac{1}{n}} \Big( \mfrakL_{\frac{1}{n}}(a)\mathfrak{B}(a) \mfrakL_{\frac{1}{n}}(b) \mathfrak{B}(a)^{-1} \Big)= &\ \lim\limits_{n\to \infty}  \mfrakH_{\frac{1}{n}} \Bigg( \mfrakL_{\frac{1}{n}}(a)\mfrakL_{\frac{1}{n}}\bigg(\mfrakH_{\frac{1}{n}} \Big( \mathfrak{B}(a) \mfrakL_{\frac{1}{n}}(b) \mathfrak{B}(a)^{-1} \Big)\bigg)  \Bigg)\\
=&\ \lim\limits_{n\to \infty}  \mfrakH_{\frac{1}{n}} \Bigg( \mfrakL_{\frac{1}{n}}(a)\mfrakL_{\frac{1}{n}}\bigg(\lim_{m\to\infty}\mfrakH_{\frac{1}{m}} \Big( \mathfrak{B}(a) \mfrakL_{\frac{1}{m}}(b) \mathfrak{B}(a)^{-1} \Big)\bigg)  \Bigg)
\end{align*} uniquely exists. 
\item In the definition of a Rota-Baxter group with limit-weight $\lim\limits_{n\to \infty}(\mfrakL_{\frac{1}{n}},\mfrakH_{\frac{1}{n}})$, we do not require that $\frakB$ is a Rota-Baxter operator with pair-weight $(\mfrakL_{\frac{1}{n}},\mfrakH_{\frac{1}{n}})$ for each $n\in \PP$, or that either of the limits $\lim\limits_{n\to \infty}\mfrakL_{\frac{1}{n}}(a)$ and $\lim\limits_{n\to \infty}\mfrakH_{\frac{1}{n}}(a)$ uniquely exists for each $a\in G$. The limit $\lim\limits_{n\to \infty}\mfrakL_{\frac{1}{n}}(a)$ is required to uniquely exist (and equal to $e$) only for a Rota-Baxter group with {\it limit-weight zero}.

\item For a topological group $G$, a Rota-Baxter operator with pair-weight $(\mfrakL,\mfrakH)$ is a special case of a Rota-Baxter group with limit-weight $\lim\limits_{n\to \infty}(\mfrakL_{\frac{1}{n}},\mfrakH_{\frac{1}{n}})$ by taking  $\mfrakL_{\frac{1}{n}} :=\mfrakL$ and $\mfrakH_{\frac{1}{n}} :=\mfrakH$ for all $n\in \mathbb{P}$.
\end{enumerate}
\end{remark}

We will give an example of a Rota-Baxter operator with limit-weight zero on a group in Proposition~\ref{prop:rb33} after giving the notion of a $\mathbb{Q}$-group. For the moment, we present the Lie algebra version of the above notions.

\begin{defn}
Let $\frakg$ be a topological Lie algebra and $\MfrakL_{\frac{1
}{n}},\MfrakH_{\frac{1}{n}}, B:\frakg \rightarrow \frakg, n\in \PP,$ be linear maps.
\begin{enumerate}
\item We call $\big(\frakg,\big(\MfrakL_{\frac{1}{n}},\MfrakH_{\frac{1}{n}}\big)\big)$  a {\bf $\lim\limits_{n\to\infty}(\MfrakL_{\frac{1}{n}},\MfrakH_{\frac{1}{n}})$-Lie algebra} if $\frakg$ is a $(\MfrakL_{\frac{1}{n}}, \MfrakH_{\frac{1}{n}})$-Lie algebra for all $n\in \mathbb{P}$, the limit $\lim\limits_{n\to\infty}\MfrakH_{\frac{1}{n}}\Big([\MfrakL_{\frac{1}{n}}(u),\MfrakL_{\frac{1}{n}}(v)]\Big)$ for $(u,v)\in \frakg\times \frakg$ uniquely exists, and the map $\lim\limits_{n\to\infty}\MfrakH_{\frac{1}{n}}\Big([\MfrakL_{\frac{1}{n}},\MfrakL_{\frac{1}{n}}]\Big):\frakg \times \frakg \rightarrow \frakg$  is \complim.
\item For a $\lim\limits_{n\to\infty}(\MfrakL_{\frac{1}{n}},\MfrakH_{\frac{1}{n}})$-Lie algebra $\frakg$, a linear operator $B$ on $\frakg$
is called a {\bf Rota-Baxter operator with limit-weight $\lim\limits_{n\to\infty}(\MfrakL_{\frac{1}{n}},\MfrakH_{\frac{1}{n}})$} if for any $u,v\in \frakg$, the limit
$$\lim\limits_{n\to \infty}\MfrakH_{\frac{1}{n}}\Big([B(u),\MfrakL_{\frac{1}{n}}(v)]\Big)$$
uniquely exists and the identity
\begin{equation}
[B(u),B(v)] =  B\Big(\lim_{n\to \infty}\MfrakH_{\frac{1}{n}}\Big([B(u),\MfrakL_{\frac{1}{n}}(v)]+[\MfrakL_{\frac{1}{n}}(u),B(v)]+[\MfrakL_{\frac{1}{n}}(u),\MfrakL_{\frac{1}{n}}(v)]\Big) \Big)
\label{eq:rblim}
\end{equation}
holds. In this case, we call $(\frakg,B)$ a {\bf Rota-Baxter Lie algebra with limit-weight $\lim\limits_{n\to\infty}(\MfrakL_{\frac{1}{n}},\MfrakH_{\frac{1}{n}})$}.
\item A Rota-Baxter Lie algebra $(\frakg,B)$ with limit-weight $\lim\limits_{n\to\infty}(\MfrakL_{\frac{1}{n}},\MfrakH_{\frac{1}{n}})$ that satisfies  $\lim\limits_{n\to\infty}\MfrakL_{\frac{1}{n}}(u)=0$  for all $u\in \frakg$ is called a {\bf Rota-Baxter Lie algebra with \limwtzero}.
\delete{
\item
A Rota-Baxter Lie algebra $(\frakg,B)$ with \limwtzero that satisfies $\lim\limits_{n\to\infty}[\MfrakL_{\frac{1}{n}}(u),\MfrakL_{\frac{1}{n}}(v)]=0$ for all $u,v\in \frakg$ is called a {\bf limit-abelian Rota-Baxter Lie algebra with limit-weight zero}.
}
\end{enumerate}
\label{defn:rblielim}
\end{defn}

\begin{remark}
As can be checked directly in Eq.~\eqref{eq:rblim}, by taking
$\MfrakL_{\frac{1}{n}}:=\frac{1}{n} \id_\frakg$ (resp. $\MfrakL_{\frac{1}{n}}:=\lambda \id_\frakg$) and $\MfrakH_{\frac{1}{n}}:=n\, \id_\frakg$ (resp. $\MfrakH_{\frac{1}{n}}:=\frac{1}{\lambda} \id_\frakg$), a Rota-Baxter Lie algebra with limit-weight $\lim\limits_{n\to\infty}(\MfrakL_{\frac{1}{n}},\MfrakH_{\frac{1}{n}})$
reduces to the usual Rota-Baxter Lie algebra with weight $0$ (resp. weight $\lambda$).
\label{rk:weight 0}
\end{remark}

The following is the limit version of Theorem~\ref{thm:tgop}. Before that, we need a lemma. Let $G$ be a Lie group and $\frakg$ its Lie algebra. We equip a topology on $\frakg$ by taking the open sets of $\frakg$ to be the inverse images of the open sets of $G$ under the map $\exp:\frakg\to G$, called the {\bf topology on $\frakg$ induced from its Lie group $G$}.

\begin{lemma}
Let $G$ be a $\lim\limits_{n\to\infty}(\mfrakL_{\frac{1}{n}},\mfrakH_{\frac{1}{n}})$-Lie group with $(\mfrakL_{\frac{1}{n}},\mfrakH_{\frac{1}{n}})$ a unital pair for each $n\in \mathbb{P}$. Let $\frakg = T_e G$ be the Lie algebra of $G$, and let $
 \MfrakL_{\frac{1}{n}}:=(\mfrakL_{\frac{1}{n}})_{*e}$ and $\MfrakH_{\frac{1}{n}}:=(\mfrakH_{\frac{1}{n}})_{*e}$
be the tangent maps of $\mfrakL_{\frac{1}{n}}$ and $\mfrakH_{\frac{1}{n}}$ at the identity $e$, respectively. Then with the induced topology from $G$, $\frakg$ is a $\lim\limits_{n\to\infty}(\MfrakL_{\frac{1}{n}},\MfrakH_{\frac{1}{n}})$-Lie algebra.
\label{lemma:groupyr}
\end{lemma}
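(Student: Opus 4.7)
The three conditions to verify, per Definition~\ref{defn:rblielim}, are: (i) $\MfrakL_{\frac{1}{n}}\MfrakH_{\frac{1}{n}}=\id_\frakg$ for each $n\in\PP$, (ii) the pointwise limit $\lim\limits_{n\to\infty}\MfrakH_{\frac{1}{n}}\big([\MfrakL_{\frac{1}{n}}(u),\MfrakL_{\frac{1}{n}}(v)]\big)$ uniquely exists for every $(u,v)\in\frakg\times\frakg$, and (iii) the resulting map $\frakg\times\frakg\to\frakg$ is \complim. Condition (i) is immediate: for each fixed $n$ the pair $(\mfrakL_{\frac{1}{n}},\mfrakH_{\frac{1}{n}})$ is unital, so Lemma~\ref{lemma:idl} applied to this single pair gives $\MfrakL_{\frac{1}{n}}\MfrakH_{\frac{1}{n}}=\id_\frakg$.

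For (ii), the plan is to convert the Lie-algebra expression into a group-level one and invoke the hypothesized synchronized convergence. Since $\mfrakL_{\frac{1}{n}}(e)=e$ and $\MfrakL_{\frac{1}{n}}$ is linear, the three smooth curves $\gamma(t):=\mfrakL_{\frac{1}{n}}(e^{tu})$, $\delta(s):=\mfrakL_{\frac{1}{n}}(e^{sv})$, and $\eta(t):=\mfrakL_{\frac{1}{n}}(e^{-tu})$ all pass through $e$ with tangent vectors $\MfrakL_{\frac{1}{n}}(u)$, $\MfrakL_{\frac{1}{n}}(v)$, $-\MfrakL_{\frac{1}{n}}(u)$ respectively. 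A second-order Baker--Campbell--Hausdorff computation in an exponential chart shows that for any such triple of curves one has
\begin{equation*}
[\MfrakL_{\frac{1}{n}}(u),\MfrakL_{\frac{1}{n}}(v)] \;=\; \frac{\partial^2}{\partial t\,\partial s}\bigg|_{t=s=0}\gamma(t)\,\delta(s)\,\eta(t).
\end{equation*}
Applying $\MfrakH_{\frac{1}{n}}$, which commutes with the partial derivatives as the tangent of $\mfrakH_{\frac{1}{n}}$ at $e$, and then using $\mfrakL_{\frac{1}{n}}\mfrakH_{\frac{1}{n}}=\id_G$ to absorb $\mfrakL_{\frac{1}{n}}(e^{tu})\mfrakL_{\frac{1}{n}}(e^{sv})=\mfrakL_{\frac{1}{n}}\big(\mu_n(e^{tu},e^{sv})\big)$, where $\mu_n(a,b):=\mfrakH_{\frac{1}{n}}\big(\mfrakL_{\frac{1}{n}}(a)\mfrakL_{\frac{1}{n}}(b)\big)$, I rewrite the whole quantity as the nested two-factor expression
\begin{equation*}
\MfrakH_{\frac{1}{n}}\big([\MfrakL_{\frac{1}{n}}(u),\MfrakL_{\frac{1}{n}}(v)]\big) \;=\; \frac{\partial^2}{\partial t\,\partial s}\bigg|_{t=s=0}\mu_n\big(\mu_n(e^{tu},e^{sv}),\,e^{-tu}\big).
\end{equation*}
As $n\to\infty$, the synchronized convergence of $\mu_n$ together with the continuity of the exponential map lets me replace each inner $\mu_n$ by its limit $\mu:=\lim\limits_{n\to\infty}\mu_n$, so that the nested expression converges to $\mu\big(\mu(e^{tu},e^{sv}),e^{-tu}\big)$; its second mixed partial derivative at the origin is the required limit in $\frakg$.

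For (iii), if $u_k\to u$ and $v_k\to v$ in the topology on $\frakg$ induced from $G$, then $e^{tu_k}\to e^{tu}$ and $e^{sv_k}\to e^{sv}$ in $G$ for each fixed $t,s$, so the \complim property of $\mu_n$ at the group level transfers, via the same nested expression, to the Lie-algebra map $(u,v)\mapsto\lim\limits_{n\to\infty}\MfrakH_{\frac{1}{n}}\big([\MfrakL_{\frac{1}{n}}(u),\MfrakL_{\frac{1}{n}}(v)]\big)$. The principal technical obstacle will be justifying the interchange of $\lim\limits_{n\to\infty}$ with the mixed partial derivatives $\partial^2/\partial t\,\partial s|_{t=s=0}$: the synchronized pointwise convergence of $\mu_n$ does not by itself yield $C^2$-convergence, so the needed local regularity must be extracted from the smoothness of the individual $\mfrakL_{\frac{1}{n}}$ and $\mfrakH_{\frac{1}{n}}$ together with the synchronization hypothesis--most cleanly by working in an exponential chart near $e$, where the BCH expansion gives explicit second-order control and reduces the issue to controlled estimates on the bilinear part of $\mu_n$ at $(e,e)$.
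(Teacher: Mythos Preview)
Your argument is essentially correct and shares the paper's overall strategy---lift the Lie-algebra expression to a group-level one governed by the synchronized hypothesis, then push limits through---but you take a slightly different route for expressing the bracket. The paper uses the antisymmetric two-factor formula
\[
\lim_{n\to\infty}\MfrakH_{\frac{1}{n}}\big([\MfrakL_{\frac{1}{n}}(u),\MfrakL_{\frac{1}{n}}(v)]\big)
=\frac{\partial^2}{\partial t\,\partial s}\bigg|_{0}\!\lim_{n\to\infty}\mfrakH_{\frac{1}{n}}\big(\mfrakL_{\frac{1}{n}}(e^{tu})\mfrakL_{\frac{1}{n}}(e^{sv})\big)
-\frac{\partial^2}{\partial t\,\partial s}\bigg|_{0}\!\lim_{n\to\infty}\mfrakH_{\frac{1}{n}}\big(\mfrakL_{\frac{1}{n}}(e^{sv})\mfrakL_{\frac{1}{n}}(e^{tu})\big),
\]
which has the advantage that each term is literally $\mu_n(a,b)$ for a pair $(a,b)$, so the hypothesis on $\lim_n\mfrakH_{\frac{1}{n}}(\mfrakL_{\frac{1}{n}}(\cdot)\mfrakL_{\frac{1}{n}}(\cdot))$ applies directly with no nesting. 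Your conjugation formula forces you to write a three-factor product as $\mu_n(\mu_n(\cdot,\cdot),\cdot)$ and then invoke synchronization twice; this works, but the paper's version is cleaner. For the \complim check the two arguments are identical in spirit. Finally, the interchange of $\lim_{n\to\infty}$ with $\partial^2/\partial t\,\partial s|_{0}$ that you rightly flag as the delicate point is also performed without further justification in the paper's own proof, so on that score you are being more scrupulous, not less.
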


\begin{proof}
By Lemma~\ref{lemma:idl}, $\frakg$ is a $(\MfrakL_{\frac{1}{n}},\MfrakH_{\frac{1}{n}})$-Lie algebra for $n\in\PP$. Further,
{\small
\begin{align*}
&\ \frac{\mathrm{d}^2}{\mathrm{d}t \mathrm{d}s}\bigg|_{t,s=0}\lim_{n\to\infty}\mfrakH_{\frac{1}{n}}
\Big(\mfrakL_{\frac{1}{n}}(e^tu)\mfrakL_{\frac{1}{n}}(e^sv)\Big)-\frac{\mathrm{d}^2}{\mathrm{d}t \mathrm{d}s}\bigg|_{t,s=0}\lim_{n\to\infty}\mfrakH_{\frac{1}{n}}
\Big(\mfrakL_{\frac{1}{n}}(e^sv)\mfrakL_{\frac{1}{n}}(e^tu)\Big)\\
=&\  \lim_{n\to\infty}(\mfrakH_{\frac{1}{n}})_{*e}\Big(\frac{\mathrm{d}^2}{\mathrm{d}t \mathrm{d}s}\bigg|_{t,s=0}e^{\MfrakL(tu)}e^{\mfrakL(sv)}-\frac{\mathrm{d}^2}{\mathrm{d}t \mathrm{d}s}\bigg|_{t,s=0}e^{\MfrakL(sv)}e^{\mfrakL(tu)}\Big)\\
= &\ \lim_{n\to\infty}(\mfrakH_{\frac{1}{n}})_{*e}\Big(\frac{\mathrm{d}^2}{\mathrm{d}t \mathrm{d}s}\bigg|_{t,s=0}e^{\MfrakL(tu)+\mfrakL(sv)+\frac{1}{2}[\mfrakL(tu),\mfrakL(sv)]+\cdots}-\frac{\mathrm{d}^2}{\mathrm{d}t \mathrm{d}s}\bigg|_{t,s=0}e^{\MfrakL(tu)+\mfrakL(sv)+\frac{1}{2}[\mfrakL(sv),\mfrakL(tu)]+\cdots}\Big)\\
&\ \quad \text{(by the Baker-Campbell-Hausdorff formula)}\\
= &\ \lim_{n\to\infty}\MfrakH_{\frac{1}{n}}
\Big([\MfrakL_{\frac{1}{n}}(u),\MfrakL_{\frac{1}{n}}(v)]\Big).
\end{align*}
}
Hence the limit uniquely exists.
It remains to check that the limit is \complim. Indeed, for $u_n,v_n \in \frakg$ with $\lim\limits_{n\to\infty}u_n=u$ and $\lim\limits_{n\to\infty}v_n=v$, we have
{\small
\begin{align*}
&\ \lim_{n\to\infty}\MfrakH_{\frac{1}{n}}
\Big([\MfrakL_{\frac{1}{n}}(u),\MfrakL_{\frac{1}{n}}(v)]\Big)\\
=&\  \lim_{n\to\infty}\Big(\mfrakH_{\frac{1}{n}}\Big)_{*e}\Big(\frac{\mathrm{d}^2}{\mathrm{d}t \mathrm{d}s}\bigg|_{t,s=0}
\mfrakL_{\frac{1}{n}}(e^{tu})\mfrakL_{\frac{1}{n}}(e^{sv})-\frac{\mathrm{d}^2}{\mathrm{d}t \mathrm{d}s}\bigg|_{t,s=0}
\mfrakL_{\frac{1}{n}}(e^{sv})\mfrakL_{\frac{1}{n}}(e^{tu})\Big)\\
=&\ \frac{\mathrm{d}^2}{\mathrm{d}t \mathrm{d}s}\bigg|_{t,s=0}\lim_{n\to\infty}\mfrakH_{\frac{1}{n}}
\Big(\mfrakL_{\frac{1}{n}}(e^{tu})\mfrakL_{\frac{1}{n}}(e^{sv})\Big)-\frac{\mathrm{d}^2}{\mathrm{d}t \mathrm{d}s}\bigg|_{t,s=0}\lim_{n\to\infty}\mfrakH_{\frac{1}{n}}
\Big(\mfrakL_{\frac{1}{n}}(e^{sv})\mfrakL_{\frac{1}{n}}(e^{tu})\Big)\\
=&\ \frac{\mathrm{d}^2}{\mathrm{d}t \mathrm{d}s}\bigg|_{t,s=0}\lim_{n\to\infty}\mfrakH_{\frac{1}{n}}
\Big(\mfrakL_{\frac{1}{n}}(e^{tu_n})\mfrakL_{\frac{1}{n}}(e^{sv_n})\Big)-\frac{\mathrm{d}^2}{\mathrm{d}t \mathrm{d}s}\bigg|_{t,s=0}\lim_{n\to\infty}\mfrakH_{\frac{1}{n}}
\Big(\mfrakL_{\frac{1}{n}}(e^{sv_n})\mfrakL_{\frac{1}{n}}(e^{tu_n})\Big)\hspace{1cm}\text{(by Eq.~\eqref{eq:jx})}\\
=&\ \lim_{n\to\infty}\MfrakH_{\frac{1}{n}}
\Big([\MfrakL_{\frac{1}{n}}(u_n),\MfrakL_{\frac{1}{n}}(v_n)]\Big). \qedhere
\end{align*}
}
\end{proof}

\begin{theorem}
Let $(G, \frakB)$ be a Rota-Baxter Lie group with limit-weight $\lim\limits_{n\to\infty}(\mfrakL_{\frac{1}{n}},\mfrakH_{\frac{1}{n}})$ from unital pairs $(\mfrakL_{\frac{1}{n}},\mfrakH_{\frac{1}{n}}), n\in \mathbb{P}$. Let $\frakg = T_e G$ be the Lie algebra of $G$, and let
\vpb
$$
B:=\frakB_{*e},\quad \MfrakL_{\frac{1}{n}}:=(\mfrakL_{\frac{1}{n}})_{*e},\quad \MfrakH_{\frac{1}{n}}:=(\mfrakH_{\frac{1}{n}})_{*e}
\vpb
$$
be the tangent maps at $e$. Then $(\frakg, B)$ is a Rota-Baxter Lie algebra with limit-weight $\lim\limits_{n\to\infty}(\MfrakL_{\frac{1}{n}},\MfrakH_{\frac{1}{n}})$, where the topology on $\frakg$ is induced from $G$. Furthermore, if $(G, \frakB)$ is a Rota-Baxter group with \limwtzero, then $(\frakg, B)$ is a Rota-Baxter Lie algebra with \limwtzero.
\label{thm:tanglim}
\end{theorem}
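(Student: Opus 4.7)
The proof plan is to imitate the argument of Theorem~\ref{thm:tgop}, differentiating the limit-weighted Rota-Baxter identity Eq.~\eqref{eq:limitwt} twice along the one-parameter subgroups $e^{tu}, e^{sv}$, and to carry the extra outer $\lim_{n\to\infty}$ through both tangent maps and through $\frac{\mathrm{d}^2}{\mathrm{d}t\,\mathrm{d}s}\big|_{t,s=0}$ by invoking the synchronized condition of Definition~\ref{defn:synchronized-limit}. The preparatory Lemma~\ref{lemma:groupyr} already yields that $\frakg$, with the topology induced from $G$, is a $\lim_{n\to\infty}(\MfrakL_{\frac{1}{n}},\MfrakH_{\frac{1}{n}})$-Lie algebra, so the right-hand side of Eq.~\eqref{eq:rblim} is meaningful and the associated limit is itself synchronized.

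For the main computation, start from
\[
[B(u),B(v)] = \frac{\mathrm{d}^2}{\mathrm{d}t\,\mathrm{d}s}\bigg|_{t,s=0} e^{tB(u)}e^{sB(v)}e^{-tB(u)} = \frac{\mathrm{d}^2}{\mathrm{d}t\,\mathrm{d}s}\bigg|_{t,s=0} \frakB(e^{tu})\frakB(e^{sv})\frakB(e^{-tu}),
\]
where the second equality uses $\frakB_{*e}=B$ and that only first-order data in each variable contributes. Applying Eq.~\eqref{eq:limitwt} twice -- first to collapse $\frakB(e^{tu})\frakB(e^{sv})$ and then to absorb the remaining factor $\frakB(e^{-tu})$, the two applications tied together by a common diagonal index that the synchronized hypothesis allows -- exactly as in the pair-weighted calculation of Theorem~\ref{thm:tgop}, puts the three-factor product under a single outer $\frakB$:
\[
\frac{\mathrm{d}^2}{\mathrm{d}t\,\mathrm{d}s}\bigg|_{t,s=0} \frakB\!\left(\lim_{n\to\infty}\mfrakH_{\frac{1}{n}}\!\bigl(\mfrakL_{\frac{1}{n}}(e^{tu})\frakB(e^{tu})\mfrakL_{\frac{1}{n}}(e^{sv})\frakB(e^{sv})\mfrakL_{\frac{1}{n}}(e^{-tu})\frakB(e^{sv})^{-1}\frakB(e^{tu})^{-1}\bigr)\right).
\]
Pulling $B=\frakB_{*e}$ and $\MfrakH_{\frac{1}{n}}=(\mfrakH_{\frac{1}{n}})_{*e}$ out of the derivative, and then swapping $\frac{\mathrm{d}^2}{\mathrm{d}t\,\mathrm{d}s}\big|_{t,s=0}$ with $\lim_{n\to\infty}$ by the same synchronized device used in the proof of Lemma~\ref{lemma:groupyr}, the Baker--Campbell--Hausdorff expansion of the inner product cancels constant and single-variable terms and retains only the three cross contributions, yielding
\[
[B(u),B(v)] = B\Bigl(\lim_{n\to\infty}\MfrakH_{\frac{1}{n}}\bigl([B(u),\MfrakL_{\frac{1}{n}}(v)]+[\MfrakL_{\frac{1}{n}}(u),B(v)]+[\MfrakL_{\frac{1}{n}}(u),\MfrakL_{\frac{1}{n}}(v)]\bigr)\Bigr),
\]
which is Eq.~\eqref{eq:rblim}.

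For the limit-weight zero statement, assume $\lim_{n\to\infty}\mfrakL_{\frac{1}{n}}(a)=e$ for every $a\in G$. Specializing at $a=e^{tu}$ and using the synchronized property once more to exchange $\frac{\mathrm{d}}{\mathrm{d}t}\big|_{t=0}$ with $\lim_{n\to\infty}$, we obtain
\[
\lim_{n\to\infty}\MfrakL_{\frac{1}{n}}(u)=\lim_{n\to\infty}\frac{\mathrm{d}}{\mathrm{d}t}\bigg|_{t=0}\mfrakL_{\frac{1}{n}}(e^{tu})=\frac{\mathrm{d}}{\mathrm{d}t}\bigg|_{t=0}\lim_{n\to\infty}\mfrakL_{\frac{1}{n}}(e^{tu})=\frac{\mathrm{d}}{\mathrm{d}t}\bigg|_{t=0}e=0,
\]
so $(\frakg,B)$ is a Rota-Baxter Lie algebra with limit-weight zero in the sense of Definition~\ref{defn:rblielim}. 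The main technical obstacle throughout is the repeated interchange of $\lim_{n\to\infty}$ with differentiation at the identity and with the tangent functor; this is the precise role of Definition~\ref{defn:synchronized-limit}, which permits diagonal replacement of limits and is the same device already exploited in Lemma~\ref{lemma:groupyr}.
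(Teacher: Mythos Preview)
Your proposal is correct and follows essentially the same route as the paper's proof: invoke Lemma~\ref{lemma:groupyr}, differentiate $\frakB(e^{tu})\frakB(e^{sv})\frakB(e^{-tu})$ twice, apply Eq.~\eqref{eq:limitwt} twice using the synchronized condition to pass to a single diagonal index, swap $\lim_{n\to\infty}$ with $\frac{\mathrm{d}^2}{\mathrm{d}t\,\mathrm{d}s}\big|_{t,s=0}$, and then read off the three bracket terms; the limit-weight zero clause is handled exactly as in Eq.~\eqref{eq:tangentzero}. The only point you leave implicit is the separate verification that $\lim_{n\to\infty}\MfrakH_{\frac{1}{n}}\bigl([B(u),\MfrakL_{\frac{1}{n}}(v)]\bigr)$ exists (required by Definition~\ref{defn:rblielim}), which the paper records as a one-line consequence of the hypothesis on $\lim_{n\to\infty}\mfrakH_{\frac{1}{n}}\bigl(\frakB(a)\mfrakL_{\frac{1}{n}}(b)\frakB(a)^{-1}\bigr)$.
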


\begin{proof}
By Lemma~\ref{lemma:groupyr},  $\frakg$ is a $\lim\limits_{n\to \infty} (\MfrakL,\MfrakH)$-Lie algebra. Also we have
\vpb
\small{\begin{align*}
    \lim\limits_{n\to \infty}\MfrakH_{\frac{1}{n}}\Big([B(u),\MfrakL_{\frac{1}{n}}(v)]\Big)= \frac{\mathrm{d}^2}{\mathrm{d}t \mathrm{d}s}\bigg|_{t,s=0}\lim\limits_{n\to \infty}  \mfrakH_{\frac{1}{n}} \Big( \mathfrak{B}(e^{tu}) \mfrakL_{\frac{1}{n}}(e^{sv}) \mathfrak{B}(e^{tu})^{-1} \Big).
\end{align*}}
So we just need to verify Eq.~\eqref{eq:rblim} as follows.
%
{\small
\begin{align*}
\left [ B(u), B(v) \right ]
=&\  \frac{\mathrm{d}^2}{\mathrm{d}t \mathrm{d}s}\bigg|_{t,s=0} e^{tB(u)}e^{sB(v)}e^{-tB(u)}=  \frac{\mathrm{d}^2}{\mathrm{d}t \mathrm{d}s}\bigg|_{t,s=0} \frakB(e^{tu}) \frakB(e^{sv}) \frakB(e^{-tu})\\
=&\  \frac{\mathrm{d}^2}{\mathrm{d}t \mathrm{d}s}\bigg|_{t,s=0} \frakB\bigg(\lim_{n\to \infty} \mfrakH_{\frac{1}{n}}\Big( \mfrakL_{\frac{1}{n}}(e^{tu}) \frakB(e^{tu}) \mfrakL_{\frac{1}{n}}(e^{sv}) \frakB(e^{tu})^{-1} \Big) \bigg)\frakB(e^{-tu})\\
=&\  \frac{\mathrm{d}^2}{\mathrm{d}t \mathrm{d}s}\bigg|_{t,s=0} \frakB\Bigg(\lim_{m\to\infty}\mfrakH_{\frac{1}{n}} \bigg(\mfrakL_{\frac{1}{n}}\Big(\lim_{n\to \infty} \mfrakH_{\frac{1}{n}}\big( \mfrakL_{\frac{1}{n}}(e^{tu}) \frakB(e^{tu}) \mfrakL_{\frac{1}{n}}(e^{sv}) \frakB(e^{tu})^{-1} \big) \Big)\\
&\ \hspace{2cm}\frakB(e^{tu})\frakB(e^{sv})\mfrakL_{\frac{1}{n}}(b)\Big(\frakB(e^{tu})\frakB(e^{sv})\Big)^{-1} \bigg)\Bigg)\\
=&\  \frac{\mathrm{d}^2}{\mathrm{d}t \mathrm{d}s}\bigg|_{t,s=0} \frakB\bigg(\lim_{n\to\infty}\mfrakH_{\frac{1}{n}} \Big( \mfrakL_{\frac{1}{n}}(e^{tu}) \frakB(e^{tu}) \mfrakL_{\frac{1}{n}}(e^{sv}) \frakB(e^{sv}) \mfrakL_{\frac{1}{n}}(e^{-tu}) \frakB(e^{sv})^{-1} \frakB(e^{tu})^{-1}\Big)\bigg)\\
&\hspace{1cm}\text{(by Eq.~(\ref{eq:jx}))}\\
=&\  \lim_{n\to \infty} \frac{\mathrm{d}^2}{\mathrm{d}t \mathrm{d}s}\bigg|_{t,s=0} \frakB\mfrakH_{\frac{1}{n}} \Big( \mfrakL_{\frac{1}{n}}(e^{tu}) \frakB(e^{tu}) \mfrakL_{\frac{1}{n}}(e^{sv}) \frakB(e^{sv}) \mfrakL_{\frac{1}{n}}(e^{-tu}) \frakB(e^{sv})^{-1} \frakB(e^{tu})^{-1}\Big)\\
=&\    B\bigg(\lim_{n\to\infty}\MfrakH_{\frac{1}{n}}\Big([B(u),\MfrakL_{\frac{1}{n}}(v)]+[\MfrakL_{\frac{1}{n}}(u),B(v)]+[\MfrakL_{\frac{1}{n}}(u),\MfrakL_{\frac{1}{n}}(v)]\Big) \bigg).
\end{align*}
}
Further,  if $(G, \frakB)$ is a Rota-Baxter group with \limwtzero, then for any $u\in \frakg$, we have
\begin{equation}
    \lim_{n\to\infty}\MfrakL_{\frac{1}{n}}(u)=\lim_{n\to\infty}\frac{\mathrm{d}}{\mathrm{d}t} \bigg|_{t=0}\mfrakL_{\frac{1}{n}}(e^{tu})=\frac{\mathrm{d}}{\mathrm{d}t} \bigg|_{t=0}\lim_{n\to\infty}\mfrakL_{\frac{1}{n}}(e^{tu})=\frac{\mathrm{d}}{\mathrm{d}t} \bigg|_{t=0}e=0,
\label{eq:tangentzero}
\end{equation}
and so $(\frakg, B)$ is a Rota-Baxter Lie algebra with \limwtzero.
\end{proof}

\subsection{Limit-weighted differential groups and differential Lie algebras}
For differential groups, we bypass the pair-weighted case and go directly to limit-weighted differential groups.

\begin{defn}
Let $G$ be a topological group, and $\mfrakL_{\frac{1}{n}},\mfrakH_{\frac{1}{n}},\frakD:G\rightarrow G$ maps for $n\in \mathbb{P}$.
A $\lim\limits_{n\to \infty}(\mfrakL_{\frac{1}{n}},\mfrakH_{\frac{1}{n}})$-group $G$
is called a {\bf differential group with limit-weight $\lim\limits_{n\to \infty}(\mfrakL_{\frac{1}{n}},\mfrakH_{\frac{1}{n}})$} if for any $a,b\in G$, the limit 
\vpb
$$\lim\limits_{n\to \infty}\mfrakH_{\frac{1}{n}}\left(
a\mfrakL_{\frac{1}{n}}\left(\mathfrak{D}
\left(b\right)\right)a^{-1}\right) 
\vpa
$$
uniquely exists and
\vpa
\begin{equation}
\mathfrak{D}\left(ab\right)=\lim\limits_{n\to \infty}\mfrakH_{\frac{1}{n}}\left(
\mfrakL_{\frac{1}{n}}\left(\mathfrak{D}\left(a\right)\right)a\mfrakL_{\frac{1}{n}}\left(\mathfrak{D}
\left(b\right)\right)a^{-1}\right).
\label{eq:diffgplh1}
\vpb
\end{equation}
If in addition, the limit $\lim\limits_{n\to \infty}\mfrakL_{\frac{1}{n}}(a)$ uniquely exists and equals to $e$ for each $a\in G$, then we call $G$ a {\bf differential group with limit-weight zero}. For either of these two notions, if $G$ is a Lie group and $\mfrakL_{\frac{1}{n}},\mfrakH_{\frac{1}{n}},\frakD$ are smooth maps, then we call $(G,\frakD)$ a {\bf differential Lie group with limit-weight $\lim\limits_{n\to \infty}(\mfrakL_{\frac{1}{n}},\mfrakH_{\frac{1}{n}})$} and respectively, {\bf with limit-weight zero}.
\label{defn:diffgplh1}
\end{defn}

An example of differential group with limit-weight $0$ will be provided in Example~\ref{ex:diffgpzero} based on the notation of a $\mathbb{Q}$-group.
We also define the Lie algebra counterparts of the above notions.

\begin{defn} Let $\frakg$ be a topological Lie algebra and $\MfrakL_{\frac{1}{n}},\MfrakH_{\frac{1}{n}}, \frakD:\frakg\rightarrow \frakg$ linear maps. A $\lim\limits_{n\to\infty}(\MfrakL_{\frac{1}{n}},\MfrakH_{\frac{1}{n}})$-Lie algebra $\frakg$
is called a {\bf differential Lie algebra with limit-weight $\lim\limits_{n\to\infty}(\MfrakL_{\frac{1}{n}},\MfrakH_{\frac{1}{n}})$} if for any $u,v\in \frakg$, the limit
\vpb 
$$\lim\limits_{n\to\infty}\MfrakH_{\frac{1}{n}}\left( [\MfrakL_{\frac{1}{n}} D(u),v]\right)
\vpa
$$
uniquely exists and there is the identity
\vpb
\begin{equation*}
D\left([u,v]\right)=\lim_{n\to\infty}\MfrakH_{\frac{1}{n}}\left( [\MfrakL_{\frac{1}{n}} D(u),v]+[u,\MfrakL_{\frac{1}{n}} D(v)]+[\MfrakL_{\frac{1}{n}} D(u),\MfrakL_{\frac{1}{n}} D(v)]\right).
\label{eq:difdb}
\vpb
\end{equation*}
Further, if for each $u\in \frakg$, $\lim\limits_{n\to\infty}\MfrakL_{\frac{1}{n}}(u)=0$, we call $(\frakg,D)$ a {\bf  differential Lie algebra with \limwtzero}.
\label{defn:difdb}
\end{defn}

\delete{
\begin{remark}
Similar to Remark~\ref{rk:weight 0}, a diferential Lie algebra with weight $\lim\limits_{n\to\infty}(\MfrakL_{\frac{1}{n}},\MfrakH_{\frac{1}{n}})$ can be reduced to a usual different Lie algebra with weight $0$.
\end{remark}
}

\begin{theorem}
Let $(G, \frakD)$ be a differential Lie group with limit-weight $\lim\limits_{n\to\infty}(
\mfrakL_{\frac{1}{n}},\mfrakH_{\frac{1}{n}})$ for which $(\mfrakL_{\frac{1}{n}},\mfrakH_{\frac{1}{n}}), n\in \PP,$ are unital pairs. Let $\frakg = T_e G$ be the Lie algebra of $G$, and let the linear maps
\vpb
$$D=\frakD_{*e},\quad\MfrakL_{\frac{1}{n}}=(\mfrakL_{\frac{1}{n}})_{*e},\quad \MfrakH_{\frac{1}{n}}=(\mfrakH_{\frac{1}{n}})_{*e}
\vpb
$$
on $\frakg$ be the tangent maps of $\frakD$, $\mfrakL$ and $\mfrakH$ at the identity $e$, respectively.
Then $\left(\mathfrak{g},D\right)$ is a differential Lie algebra with limit-weight $\lim\limits_{n\to\infty}(\MfrakL_{\frac{1}{n}},\MfrakH_{\frac{1}{n}})$, where the topology on $\frakg$ is induced from $G$.  If $(G, \frakB)$ is a differential group with limit-weight zero, then $(\frakg, B)$ is a differential Lie algebra with \limwtzero.
\label{thm:diffgLie1}
\end{theorem}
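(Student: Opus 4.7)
The plan is to mirror the proof of Theorem~\ref{thm:tanglim}, replacing the Rota--Baxter identity with the differential identity~\eqref{eq:diffgplh1}. By Lemma~\ref{lemma:groupyr}, $\frakg$ with the induced topology is already a $\lim_{n\to\infty}(\MfrakL_{\frac{1}{n}},\MfrakH_{\frac{1}{n}})$-Lie algebra, so the ambient structure is in place. As a preliminary, setting $a=b=e$ in~\eqref{eq:diffgplh1} and using unitality of the pairs forces $\frakD(e)=e$, which ensures the tangent map $D:=\frakD_{*e}:\frakg\to\frakg$ is well-defined.

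To verify the differential identity of~\eqref{eq:difdb}, I would express
\[
 D([u,v]) = \frac{\mathrm{d}^2}{\mathrm{d}t\,\mathrm{d}s}\bigg|_{t,s=0} \frakD\bigl(e^{tu}e^{sv}e^{-tu}\bigr),
\]
using $[u,v]=\frac{\mathrm{d}^2}{\mathrm{d}t\,\mathrm{d}s}\big|_{t,s=0} e^{tu}e^{sv}e^{-tu}$ together with linearity of tangent maps. Then I would unfold $\frakD$ on the product via two nested applications of~\eqref{eq:diffgplh1}: first to $e^{tu}\cdot(e^{sv}e^{-tu})$, and next to $e^{sv}\cdot e^{-tu}$. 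This yields a nested expression under $\mfrakH_{\frac{1}{n}}$ involving $\mfrakL_{\frac{1}{n}}(\frakD(e^{tu}))$, $\mfrakL_{\frac{1}{n}}(\frakD(e^{sv}))$, and $\mfrakL_{\frac{1}{n}}(\frakD(e^{-tu}))$, all suitably conjugated by powers of $e^{tu}$ and $e^{sv}$. Using the \complim condition via~\eqref{eq:jx}, I can swap $\lim_{n\to\infty}$ past $\frac{\mathrm{d}^2}{\mathrm{d}t\,\mathrm{d}s}\big|_{t,s=0}$; the product and chain rules, together with $\frakD(e)=e$, then isolate precisely the three contributions $[\MfrakL_{\frac{1}{n}}D(u),v]$, $[u,\MfrakL_{\frac{1}{n}}D(v)]$, and $[\MfrakL_{\frac{1}{n}}D(u),\MfrakL_{\frac{1}{n}}D(v)]$ inside $\MfrakH_{\frac{1}{n}}$, with the outer $\lim_{n\to\infty}$ surviving. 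Unique existence of the auxiliary limit $\lim_{n\to\infty}\MfrakH_{\frac{1}{n}}([\MfrakL_{\frac{1}{n}}D(u),v])$ follows en route from the same identification.

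For the \limwtzero claim, I would transplant the argument of~\eqref{eq:tangentzero} verbatim: differentiating the hypothesis $\lim_{n\to\infty}\mfrakL_{\frac{1}{n}}(e^{tu})=e$ at $t=0$, and interchanging the derivative with the limit (again justified by the synchronized-limit condition), yields $\lim_{n\to\infty}\MfrakL_{\frac{1}{n}}(u)=0$, which is exactly the \limwtzero condition on the Lie algebra side.

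The main obstacle is the bookkeeping: the repeated use of~\eqref{eq:diffgplh1} generates a cascade of nested limits, and at each step one must verify that the intermediate quantities have uniquely existing limits before swapping them with $\partial_t\partial_s\big|_{t,s=0}$. The \complim hypothesis on the map $\lim_{n\to\infty}\mfrakH_{\frac{1}{n}}(\mfrakL_{\frac{1}{n}}\cdot\mfrakL_{\frac{1}{n}})$ is precisely what legitimizes each such interchange, just as it does in Theorem~\ref{thm:tanglim}.
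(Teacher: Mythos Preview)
Your proposal is correct and follows essentially the same approach as the paper: both compute $D[u,v]$ via $\frac{\mathrm{d}^2}{\mathrm{d}t\,\mathrm{d}s}\big|_{t,s=0}\frakD(e^{tu}e^{sv}e^{-tu})$, unfold $\frakD$ through two nested applications of~\eqref{eq:diffgplh1}, invoke the \complim condition~\eqref{eq:jx} to interchange limits with differentiation, and extract the three bracket terms by the product rule, with the \limwtzero claim handled via~\eqref{eq:tangentzero}. Your explicit invocation of Lemma~\ref{lemma:groupyr} and the preliminary remark on $\frakD(e)=e$ are slightly more careful than the paper's terse presentation, but the argument is the same.
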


\begin{proof}
The first claim follows from 
\small{\begin{align}
    \lim\limits_{n\to\infty}\MfrakH_{\frac{1}{n}}\left( [\MfrakL_{\frac{1}{n}} D(u),v]\right)=\left.\dfrac{\dd^2}{\dd t\dd s}\right|_{t,s=0}\lim\limits_{n\to \infty}\mfrakH_{\frac{1}{n}}\left(
e^{tu}\mfrakL_{\frac{1}{n}}\left(\mathfrak{D}
\left(e^{sv}\right)\right)e^{-tu}\right)
\end{align}}
and
{\small
\begin{align*}
D\left[u,v\right] =&\ \left.\dfrac{\dd^2}{\dd t\dd s}\right|_{t,s=0}\frakD\left(e^{tu}e^{sv}e^{-tu}\right)\\
=&\ \left.\dfrac{\dd^2}{\dd t\dd s}\right|_{t,s=0} \lim_{n\to\infty}\mfrakH_{\frac{1}{n}}\bigg( \mfrakL_{\frac{1}{n}}\frakD(e^{tu})e^{tu}
\mfrakL_{\frac{1}{n}}\frakD(e^{sv})e^{sv}\mfrakL_{\frac{1}{n}}\frakD\left(e^{-tu}\right)e^{-sv}e^{-tu}
\Big)  \quad \text{(by Eq.~\eqref{eq:jx})}\\
=&\ \lim_{n\to\infty} (\mfrakH_{\frac{1}{n}})_{*e} \Big( \left.\dfrac{\dd^2}{\dd t\dd s}\right|_{t,s=0}e^{sv}\mfrakL_{\frac{1}{n}}\frakD(e^{-tu})e^{-sv}+\left.\dfrac{\dd^2}{\dd t\dd s}\right|_{t,s=0} e^{tu}\mfrakL_{\frac{1}{n}}\frakD(e^{sv})e^{-tu}\\
&\ +\left.\dfrac{\dd^2}{\dd t\dd s}\right|_{t,s=0} \mfrakL_{\frac{1}{n}}\frakD(e^{tu})\mfrakL_{\frac{1}{n}}\frakD(e^{sv})\mfrakL_{\frac{1}{n}}\frakD(e^{-tu})\bigg)  \\
=&\ \lim_{n\to\infty}\MfrakH_{\frac{1}{n}}\left( [\MfrakL_{\frac{1}{n}} D(u),v]+[u,\MfrakL_{\frac{1}{n}} D(v)]+[\MfrakL_{\frac{1}{n}} D(u),\MfrakL_{\frac{1}{n}} D(v)]\right), \quad u,v\in \frakg.
\end{align*}
}
Further,  if $(G, \frakB)$ is a differential group with limit-weight zero, then by Eq.~\eqref{eq:tangentzero}, $(\frakg, D)$ is a differential Lie algebra with \limwtzero.
\end{proof}

\subsection{Integration and derivation of group-values functions}
\label{ss:intgroup}
As is well-known, Rota-Baxter operators and differential operators with weight zero on algebras are generalizations of integral operators and derivations in analysis.
This subsection is devoted to introducing the integration and derivation for maps with values in groups.

\begin{defn}
Let $G$ be a topological group and $\mfrakL_{\lambda},\mfrakH_{\lambda}$:$G\rightarrow G$ for $\lambda\in (0,1)$ be maps. For a subset $S$ of $G$, define a family of sets $S_{\lambda}:=S\cap \mfrakH_{\lambda}^{-1}(S)$.
\begin{enumerate}
\item We call $S$ a {\bf $(\mfrakL_{\lambda},\mfrakH_{\lambda})$-subset of  $G$} if $(\mfrakL_{\lambda}\mfrakH_{\lambda})|_{S_{\lambda}}=\id_{S_\lambda}$ for $\lambda\in (0,1)$.

\item If in addition, $\lim\limits_{\lambda \to 0}\mfrakL_{\lambda}(a)$ uniquely exists and equal to $e$ for all $a\in S$ and $\lambda\in (0,1)$, then we call $S$ a {\bf $(\mfrakL_{\lambda},\mfrakH_{\lambda})$-subset of $G$ with limit-weight zero}.
For notational convenience, we also define $\mfrakL_0:G\to G, a\mapsto e$, so as to have $\lim\limits_{\lambda \to 0}\mfrakL_{\lambda}(a)=\mfrakL_0(a)$ for each $a\in S$.
%
\end{enumerate}
\mlabel{defn:ppgroup}
\end{defn}

\begin{remark}
In particular, if $S_\lambda=S$ and $(\mfrakL_{\lambda}\mfrakH_{\lambda})|_{S}=\id_S$, then $S$ is a  $(\mfrakL_{\lambda},\mfrakH_{\lambda})$-subset. If in addition $S=G$, then $G$ is a  $(\mfrakL_{\lambda},\mfrakH_{\lambda})$-group.
\end{remark}

We expose an example of a $(\mfrakL_{\lambda},\mfrakH_{\lambda})$-subset.

\begin{exam}
Let $G$ be a Lie group  with $\frakg$ as its Lie algebra. Let $W$ be a subset of $\frakg$ such that for any $t\in (0,1)$ and $u\in W$, we have $tu\in W$, and such that $\exp|_W$ is injective.
\delete{
for any linearly independent $u,v \in W $ and for any $s,t\in (0,1)$,
$\exp(tu)\neq \exp(sv)$ and $tu\in W$
.
}
Define $S:=\exp(W)$ and two maps
\vpb
$$\mfrakL_{\lambda}:G\rightarrow G, \quad a\mapsto\Bigg\{\begin{array}{cc}
\exp(\lambda u), & \text{if }a\in S,\, a=\exp(u),  \\
         I, & \text{otherwise,}
\end{array}$$
$$\mfrakH_{\lambda}:G\rightarrow G, \quad a\mapsto \Bigg\{
\begin{array}{cc}
\exp(\frac{1}{\lambda} u),&  \text{if } a\in S,\, a=\exp(u),\, \exp(\frac{1}{\lambda} u)\in S,\\
         I, & \text{otherwise.}
\end{array}  $$
For each $a=\exp(u)$ in $S_{\lambda}$, we have $\frac{1}{\lambda}u \in W$ and  \vpb
$$\mfrakL_{\lambda}\mfrakH_{\lambda}(a)=\mfrakL_{\lambda}
\Big(\exp(\frac{1}{\lambda}u)\Big)=\exp(u)=a.
\vpb$$
For each $b=\exp(v)\in S$, we get
\vpb
$$\lim\limits_{n\to\infty}\mfrakL_{\frac{1}{n}}(b)=
\lim\limits_{n\to\infty}\exp(\frac{1}{n}v)=I.$$
Thus $S$ is a $(\mfrakL_{\lambda},\mfrakH_{\lambda})$-subset of $G$.
\end{exam}

Another example will be given in Example~\ref{ex:ljfz}.
We are going to define the integral in a $(\mfrakL_{\lambda},\mfrakH_{\lambda})$-subset in $S$ with limit-weight zero.
As usual, a {\bf partition} $P$ of a closed interval $[\ell,m]\subseteq \mathbb{R}$ is a finite set of points $\{t_0,t_1,\cdots, t_n\}$ such that $\ell= t_0\leq t_1 \leq\cdots \leq t_n=m$. Denote $\triangle_i:= t_i-t_{i-1}$ and $|P|:= \max\limits_{1\leqslant i\leqslant n}\triangle_i$.

\begin{defn}
Let $G$ be a topological group and $S$ a $(\mfrakL_{\lambda},\mfrakH_{\lambda})$-subset of $G$ with weight zero. A map $a:\mathbb{R}\to S$ is called {\bf $(\mfrakL_{\lambda},\mfrakH_{\lambda})$-integrable} on a closed interval $[\ell,m]\subset \RR$ if the limit
\vpb$$\lim\limits_{|P|\to 0 }\prod\limits_{k=1}^{n}\mfrakL_{{\triangle_{n+1-k}}}\Big(a(\xi_{n+1-k})\Big)$$
uniquely exists in $S$ for arbitrary choices of $\xi_k \in [t_{k-1},t_{k}]$. In this case, we denote the limit by \vpa
$$\int_{\ell}^{m}a(t)d_S\,t:=\int_{\ell}^{m}a(t)
d_S^{(\mfrakL_{\lambda},\mfrakH_{\lambda})}t
\vpa$$
and call it the {\bf $(\mfrakL_{\lambda},\mfrakH_{\lambda})$-integral} of the map $a$ on $[\ell,m]$.
\label{defn:IntLH}
\end{defn}

Before stating the main theorem in this subsection, we need to introduce more notions.

\begin{defn}
\label{defn:intsnow}
Let $G$ be a topological group and $S$ a $(\mfrakL_{\lambda},\mfrakH_{\lambda})$-subset of $G$ with weight zero.
\begin{enumerate}
\item Let $a_n:[0,x]\to S, n\in \PP,$ be a sequence of maps such that $\lim\limits_{n\to \infty} a_n(t)$ uniquely exists for each $t\in [0,x]$. We call $\lim\limits_{n\to \infty} a_n$ {\bf \compint on $[0,x]$} if $\lim\limits_{n\to\infty}a_{n}:[0,x]\rightarrow S$ is a $(\mfrakL_{\lambda},\mfrakH_{\lambda})$-integrable map, the limit $\lim\limits_{n \to \infty }\prod\limits_{k=1}^{n}
\mfrakL_{\frac{1}{n}}\bigg(a_n\Big({\frac{n+1-k}{n}}x\Big)\bigg)$ uniquely exists in $S$ and the integral can be obtained as
\vpb
\begin{align}
\int_{0}^{x}\lim_{n\to\infty}a_{n}(t)d_{S}t
=\lim\limits_{n \to \infty }\prod\limits_{k=1}^{n}
\mfrakL_{\frac{1}{n}}\bigg(a_n\Big({\frac{n+1-k}{n}}x\Big)\bigg).
\label{eq:int-commutative}
\vpc
\end{align}
\label{it:int-commutative}
\item We call $S$ a {\bf \compgroup subset} if for each $a_{n,k},b_{n,k},c_{n,k,m},d_{n,k,m}\in S$ with
$\lim\limits_{m\to\infty}c_{n,k,m}=c_{n,k}$
and $\lim\limits_{m\to\infty}d_{n,k,m}=c_{n,k}^{-1}$, the limits
\vpb
$$    \lim\limits_{n \to \infty }\prod\limits_{k=1}^{n}\mfrakL_{\frac{1}{n}}(a_{n,k})c_{n,k}
\mfrakL_{\frac{1}{n}}(b_{n,k})c_{n,k}^{-1} \,\text{ and }\, \lim\limits_{n \to \infty }\prod\limits_{k=1}^{n}\mfrakL_{\frac{1}{n}}(a_{n,k})c_{n,k,n}\mfrakL_{\frac{1}{n}}(b_{n,k})d_{n,k,n}
\vpb
$$
uniquely exist in $S$ and coincide:
\vpb
\begin{align}
\label{eq:int-algebraic}
    \lim\limits_{n \to \infty }\prod\limits_{k=1}^{n}\mfrakL_{\frac{1}{n}}(a_{n,k})c_{n,k}\mfrakL_{\frac{1}{n}}(b_{n,k})c_{n,k}^{-1}=\lim\limits_{n \to \infty }\prod\limits_{k=1}^{n}\mfrakL_{\frac{1}{n}}(a_{n,k})c_{n,k,n}\mfrakL_{\frac{1}{n}}(b_{n,k})d_{n,k,n}.
\end{align}
\end{enumerate}
\end{defn}

The integral defined above satisfies the integration-by-parts formula in the following sense.

\begin{theorem}
\label{thm:HIfml}
Let $G$ be a topological group, and $S$ a $(\mfrakL_{\lambda},\mfrakH_{\lambda})$-subset of $G$ with weight zero and also a \compgroup subset. Let $a,b:[0,x]\to S$ be $(\mfrakL_{\lambda},\mfrakH_{\lambda})$-integrable maps on $[0,t]$ for each $t\in [0,x]$. If for each $\lambda\in (0,1)$ and $t\in [0,x]$, we have \begin{equation}
\mfrakL_{\lambda}\Big(a(t)\Big)\ad_{\Big(\int_{0}^{t}a(s)
d_S^{(\mfrakL_{\lambda},\mfrakH_{\lambda})}s\Big)} \mfrakL_{\lambda}\Big(b(t)\Big)\in S_{\lambda}
\mlabel{eq:inslam}
\end{equation} and the limit
\[
[0, x]\rightarrow S, \quad t\mapsto \lim\limits_{n\to \infty}\mfrakH_{\frac{1}{n}}\bigg(\mfrakL_{\frac{1}{n}}\Big(a(t)\Big)\ad_{\Big(\int_{0}^{t}a(s)d_S^{(\mfrakL_{\lambda},\mfrakH_{\lambda})}s\Big)} \mfrakL_{\frac{1}{n}}\Big(b(t)\Big)\bigg)
\]
is \compint on $[0,x]$, then the relation for a Rota-Baxter operator with limit-weight in Eq.~\eqref{eq:limitwt} holds:
    \begin{align*}
    \bigg(\int_{0}^{x}a(t)d_S^{(\mfrakL_{\lambda},\mfrakH_{\lambda})}t\bigg)\bigg(\int_{0}^{x}b(t)d_S^{(\mfrakL_{\lambda},\mfrakH_{\lambda})}t\bigg)=\int_{0}^{x}\lim_{n\to \infty}\mfrakH_{\frac{1}{n}}\bigg(\mfrakL_{\frac{1}{n}}\Big(a(t)\Big)\ad_{\Big(\int_{0}^{t}a(s)d_S^{(\mfrakL_{\lambda},\mfrakH_{\lambda})}s\Big)} \mfrakL_{\frac{1}{n}}\Big(b(t)\Big)\bigg)d_S^{(\mfrakL_{\lambda},\mfrakH_{\lambda})}t.
    \end{align*}
\end{theorem}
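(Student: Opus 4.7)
The plan is to discretize both sides as Riemann-type products over a partition of $[0,x]$ and match them through a telescoping identity, with the two \complim hypotheses providing the necessary limit exchanges. I would fix a partition $P = \{0 = t_0 < t_1 < \cdots < t_n = x\}$ with tags $\xi_k \in [t_{k-1},t_k]$ and introduce the partial Riemann products
\[
A_k := \mfrakL_{\triangle_k}(a(\xi_k))\cdots\mfrakL_{\triangle_1}(a(\xi_1)), \qquad B_k := \mfrakL_{\triangle_k}(b(\xi_k))\cdots\mfrakL_{\triangle_1}(b(\xi_1)),
\]
which by Definition~\ref{defn:IntLH} converge respectively to $A(x) := \int_0^x a(t)\,d_S t$ and $B(x) := \int_0^x b(t)\,d_S t$ as $|P|\to 0$. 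From the recursion $A_k = \mfrakL_{\triangle_k}(a(\xi_k))A_{k-1}$ together with $B_k = \mfrakL_{\triangle_k}(b(\xi_k))B_{k-1}$ and the insertion of $A_{k-1}^{-1}A_{k-1}$ between the leading $A_k$- and $B_k$-factors, an induction on $k$ would yield the telescoping identity
\[
A_n B_n = \prod_{k=1}^{n} \mfrakL_{\triangle_{n+1-k}}(a(\xi_{n+1-k}))\, A_{n-k}\, \mfrakL_{\triangle_{n+1-k}}(b(\xi_{n+1-k}))\, A_{n-k}^{-1}, \qquad A_0 = e,
\]
whose left-hand side tends to $A(x)B(x)$ as $|P|\to 0$.

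Next I would invoke the \compgroup subset hypothesis~\eqref{eq:int-algebraic} with $c_{n,k} := A(\xi_{n+1-k})$ and the diagonal discretization $c_{n,k,n} := A_{n-k}$, which approximates $c_{n,k}$ since the partial product $A_{n-k}$ is itself a Riemann approximation of $A$ evaluated near $\xi_{n+1-k}$, in order to replace each $A_{n-k}$ (respectively $A_{n-k}^{-1}$) in the telescoped product by $A(\xi_{n+1-k})$ (respectively $A(\xi_{n+1-k})^{-1}$) without changing the limit. By hypothesis~\eqref{eq:inslam} each resulting factor $\mfrakL_{\triangle_{n+1-k}}(a(\xi_{n+1-k}))A(\xi_{n+1-k})\mfrakL_{\triangle_{n+1-k}}(b(\xi_{n+1-k}))A(\xi_{n+1-k})^{-1}$ lies in $S_{\triangle_{n+1-k}}$, so the relation $\mfrakL_\lambda\mfrakH_\lambda = \id$ on $S_\lambda$ allows rewriting it as $\mfrakL_{\triangle_{n+1-k}}\bigl(\mfrakH_{\triangle_{n+1-k}}(\cdots)\bigr)$.

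For the final step I would use the assumed \compint property of
\[
c(t) := \lim_{m\to\infty}\mfrakH_{1/m}\bigl(\mfrakL_{1/m}(a(t))A(t)\mfrakL_{1/m}(b(t))A(t)^{-1}\bigr)
\]
on $[0,x]$ together with~\eqref{eq:int-commutative}: taking the diagonal $1/m = \triangle_{n+1-k}$ synchronizes the inner limit defining $c$ with the partition refinement, so the telescoped expression converges exactly to the Riemann-type product that defines $\int_0^x c(t)\,d_S t$. Equating this with the limit of $A_nB_n = A(x)B(x)$ from the first step then produces the claimed formula.

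The main obstacle is the simultaneous handling of two limiting processes — the partition refinement $|P|\to 0$ and the inner limit $m\to\infty$ in the definition of $c$ — while keeping every intermediate factor inside the appropriate $S_\lambda$ so that $\mfrakL_\lambda\mfrakH_\lambda = \id$ is applicable pointwise. Hypotheses~\eqref{eq:inslam}, \eqref{eq:int-commutative} and \eqref{eq:int-algebraic} are calibrated precisely to license this diagonalization, but verifying that they chain together cleanly — in particular that \eqref{eq:int-algebraic} accommodates the possibly nonuniform step sizes $\triangle_k$ rather than only the uniform $1/n$, and that the replacement $A_{n-k}\rightsquigarrow A(\xi_{n+1-k})$ can be carried out uniformly in $k$ — is the delicate technical point.
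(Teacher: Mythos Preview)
Your outline is essentially the paper's argument run in the opposite direction: the paper starts from the right-hand side, applies \eqref{eq:int-commutative} and \eqref{eq:inslam} to strip the outer $\mfrakL_{\frac1n}\mfrakH_{\frac1n}$, expands each inner integral $\int_0^{\frac{n+1-k}{n}x}a(s)\,d_Ss$ via a carefully chosen partition, invokes \eqref{eq:int-algebraic} to diagonalize $m=n$, and then the product telescopes down to the two Riemann products for $\int_0^x a$ and $\int_0^x b$. Your telescoping identity $A_nB_n=\prod_k \mfrakL_{\triangle_{n+1-k}}(a(\xi_{n+1-k}))A_{n-k}\mfrakL_{\triangle_{n+1-k}}(b(\xi_{n+1-k}))A_{n-k}^{-1}$ is exactly the same mechanism, and the three hypotheses are used in the same roles.

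The one point you flag yourself is real and needs a fix rather than a caveat: both \eqref{eq:int-commutative} and \eqref{eq:int-algebraic} are stated only for the \emph{uniform} scale $\mfrakL_{1/n}$, so a general tagged partition with step sizes $\triangle_k$ is not covered. The paper handles this by working from the outset with the uniform partition $t_k=\tfrac{k}{n}x$, $\xi_k=\tfrac{k}{n}x$; more subtly, when it expands $\bigl(\int_0^{\frac{n+1-k}{n}x}a\bigr)^{-1}$ it does \emph{not} use the same partition as for $\int_0^{\frac{n+1-k}{n}x}a$, but inserts an extra node at $\tfrac{n-k}{n}x$ so that after setting $m=n$ via \eqref{eq:int-algebraic} the consecutive factors $(\cdots)^{-1}\cdot(\cdots)$ cancel exactly. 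If you rewrite your argument on the uniform partition and build the $A_{n-k}$ so that this pairwise cancellation is exact (rather than appealing to a further approximation $A_{n-k}\to A(\xi_{n+1-k})$), your proof will line up with the paper's.
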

Note that here we cannot yet claim to have a Rota-Baxter operator with limit-weight zero since there still needs to be a group that is closed under the operation
$$ a \mapsto \int_{0}^{x}a(t)d_S^{(\mfrakL_{\lambda},\mfrakH_{\lambda})}t.
$$

\begin{proof}
We have
{\small
\begin{align*}
&\int_{0}^{x}\lim_{n\to \infty}\mfrakH_{\frac{1}{n}}\bigg(\mfrakL_{\frac{1}{n}}\Big(a(t)\Big)
\Big(\int_{0}^{t}a(s)d_S^{(\mfrakL_{\lambda},\mfrakH_{\lambda})}s\Big) \mfrakL_{\frac{1}{n}}\Big(a(t)\Big)\Big(\int_{0}^{t}a(s)d_S^{(\mfrakL_{\lambda},
\mfrakH_{\lambda})}s\Big)^{-1}\bigg)d_S^{(\mfrakL_{\lambda},\mfrakH_{\lambda})}t\\
=&\   \lim_{\substack{m=n\to \infty}}\prod_{k=1}^{m}\Bigg[\mfrakL_{\frac{1}{m}}\mfrakH_{\frac{1}{n}}\Bigg(\bigg(\mfrakL_{\frac{1}{n}}\Big(a\bigg(\frac{m+1-k
}{m}x\bigg)\Big)\Big(\int_{0}^{\frac{m+1-k
}{m}x}\hspace{-.7cm}a(s)d_S^{(\mfrakL_{\lambda},\mfrakH_{\lambda})}s\Big) \mfrakL_{\frac{1}{n}}\Big(b\bigg(\frac{m+1-k
}{m}x\bigg)\Big)\Big(\int_{0}^{\frac{m+1-k
}{m}x}\hspace{-.7cm} a(s)d_S^{(\mfrakL_{\lambda},\mfrakH_{\lambda})}s\Big)^{-1}\bigg)\Bigg)\Bigg]\\
& \hspace{7cm} \text{(by Eq.~\eqref{eq:int-commutative}) }\\
=&\   \lim_{\substack{n\to\infty}}\prod_{k=1}^{n}\Bigg[\mfrakL_{\frac{1}{n}}\Big(a\bigg(\frac{n+1-k
}{n}x\bigg)\Big)\Big(\int_{0}^{\frac{n+1-k
}{n}x}a(s)d_S^{(\mfrakL_{\lambda},\mfrakH_{\lambda})}s\Big) \mfrakL_{\frac{1}{n}}\Big(b\bigg(\frac{n+1-k
}{n}x\bigg)\Big)\Big(\int_{0}^{\frac{n+1-k
        }{n}x}a(s)d_S^{(\mfrakL_{\lambda},\mfrakH_{\lambda})}s\Big)^{-1}\Bigg].\\
 & \hspace{7cm} \text{(by Eq.~\eqref{eq:inslam}) }
\end{align*}
}
We will use a special partition for the integrals to convert the above product into a telescopic type product, so that most factors can be canceled.
Thanks to Definition~\ref{defn:IntLH}, for each $k$, we have
$${\small \int_{0}^{\frac{n+1-k
}{n}x}a(s)d_S^{(\mfrakL_{\lambda},\mfrakH_{\lambda})}s=\lim_{m\to\infty}\prod_{s=1}^{m}\mfrakL_{\frac{n+1-k}{nm}}
\bigg(a\Big(\frac{n+1-k
}{n}\frac{m+1-s}{m}x\Big)\bigg)
}
$$
and
$$
{\small \Big(\int_{0}^{\frac{n+1-k
        }{n}x}a(s)d_S^{(\mfrakL_{\lambda},\mfrakH_{\lambda})}s\Big)^{-1}=\Bigg(\lim_{m\to\infty}\mfrakL_{\frac{1}{n}}\bigg(a\Big(\frac{n-k}{n}x\Big)\bigg)
\prod_{s=1}^{m}\mfrakL_{\frac{n-k}{nm}}\bigg(a\Big(\frac{n-k
}{n}\frac{m+1-s}{m}x\Big)\bigg)\Bigg)^{-1},
}$$
which implies
{\small
\begin{align*}
&\int_{0}^{x}\lim_{n\to \infty}\mfrakH_{\frac{1}{n}}\bigg(\mfrakL_{\frac{1}{n}}\Big(a(t)\Big)
\Big(\int_{0}^{t}a(s)d_S^{(\mfrakL_{\lambda},\mfrakH_{\lambda})}s\Big) \mfrakL_{\frac{1}{n}}\Big(a(t)\Big)\Big(\int_{0}^{t}a(s)d_S^{(\mfrakL_{\lambda},
\mfrakH_{\lambda})}s\Big)^{-1}\bigg)d_S^{(\mfrakL_{\lambda},\mfrakH_{\lambda})}t\\
=&\ \lim_{\substack{n\to\infty}}\prod_{k=1}^{n}\Bigg[\mfrakL_{\frac{1}{n}}
\Big(a\bigg(\frac{n+1-k
}{n}x\bigg)\Big)\bigg(\lim_{m\to\infty}\prod_{s=1}^{m}\mfrakL_{\frac{n+1-k}{nm}}
\Big(a\bigg(\frac{n+1-k
}{n}\frac{m+1-s}{m}x\bigg)\Big)\bigg)\\
&\qquad \qquad \mfrakL_{\frac{1}{n}}\Big(b\bigg(\frac{n+1-k
}{n}x\bigg)\Big)\bigg(\lim_{m\to\infty}\mfrakL_{\frac{1}{n}}\Big(a\bigg(\frac{n-k}{n}x\bigg)\Big)
\prod_{s=1}^{m}\mfrakL_{\frac{n-k}{nm}}\Big(a\bigg(\frac{n-k
}{n}\frac{m+1-s}{m}x\bigg)\Big)\bigg)^{-1}\Bigg]\\
=&\ \lim_{\substack{n=m\to\infty}}\prod_{k=1}^{n}\Bigg[\mfrakL_{\frac{1}{n}}
\Big(a\bigg(\frac{n+1-k
}{n}x\bigg)\Big)\Big(\prod_{s=1}^{m}\mfrakL_{\frac{n+1-k}{nm}}\Bigg(a\bigg(\frac{n+1-k
}{n}\frac{m+1-s}{m}x\bigg)\Bigg)\\
&\qquad \qquad \ \ \ \mfrakL_{\frac{1}{n}}\Big(b\bigg(\frac{n+1-k
}{n}x\bigg)\Big)\bigg(\mfrakL_{\frac{1}{n}}\Big(a\bigg(\frac{n-k}{n}x\bigg)\Big)\prod_{s=1}^{m}
\mfrakL_{\frac{n-k}{nm}}\Big(a\bigg(\frac{n-k
}{n}\frac{m+1-s}{m}x\bigg)\Big)\bigg)^{-1}\Bigg]
\ \ \  \text{(by Eq.~\eqref{eq:int-algebraic})}\\
=&\ \lim_{\substack{n\to\infty}}\prod_{k=1}^{n}\Bigg[\mfrakL_{\frac{1}{n}}\Big(a(\frac{n+1-k
}{n}x)\Big)\bigg(\prod_{s=1}^{n}\mfrakL_{\frac{n+1-k}{n^2}}\Big(a\bigg(\frac{n+1-k
}{n}\frac{n+1-s}{n}x\bigg)\Big)\bigg)\\
&\qquad\qquad \mfrakL_{\frac{1}{n}}\Big(b\bigg(\frac{n+1-k
}{n}x\bigg)\Big)\bigg(\mfrakL_{\frac{1}{n}}\Big(a\bigg(\frac{n-k}{n}x\bigg)\Big)\prod_{s=1}^{n}
\mfrakL_{\frac{n-k}{n^2}}\Big(a\bigg(\frac{n-k
}{n}\frac{n+1-s}{n}x\bigg)\Big)\bigg)^{-1}\Bigg]\\
=&\ \lim_{\substack{n\to\infty}}\mfrakL_{\frac{1}{n}}\Big(a(x)\Big)\bigg(\prod_{s=1}^{n}
\mfrakL_{\frac{1}{n}}\Big(a\bigg(\frac{n+1-s}{n}x\bigg)\Big)\bigg)\prod_{k=1}^{n}\mfrakL_{\frac{1}{n}}
\Big(b\bigg(\frac{n+1-k}{n}x\bigg)\Big)\bigg(\mfrakL_{\frac{1}{n}}\Big(a(0)\Big)\prod_{s=1}^{n}\mfrakL_0
\Big(a(0)\Big)\bigg)^{-1}\\
&\text{\bigg(by $\bigg(\mfrakL_{\frac{1}{n}}\Big(a\bigg(\frac{n-k}{n}x\bigg)\Big)\prod_{s=1}^{n}
\mfrakL_{\frac{n-k}{n^2}}\Big(a\bigg(\frac{n-k
}{n}\frac{n+1-s}{n}\bigg)x\Big)\bigg)^{-1}\bigg(\mfrakL_{\frac{1}{n}}\Big(a\bigg(\frac{n-k}{n}x\bigg)\Big)\prod_{s=1}^{n}
\mfrakL_{\frac{n-k}{n^2}}\Big(a\bigg(\frac{n-k
}{n}\frac{n+1-s}{n}x\bigg)\Big)\bigg)=e$\bigg)}\\
=&\  \bigg(\int_{0}^{x}a(t)d_S^{(\mfrakL_{\lambda},\mfrakH_{\lambda})}t\bigg)\bigg(\int_{0}^{x}b(t)
d_S^{(\mfrakL_{\lambda},\mfrakH_{\lambda})}t\bigg).
\end{align*}
}
Here the last step employs 
\vpb
$$
\hspace{1.5cm}
\lim_{\substack{n\to\infty}}\mfrakL_{\frac{1}{n}}\Big(f(x)\Big)=e\ \text{ and }\ 
\int_{0}^{x}f(t)d_S^{(\mfrakL_{\lambda},\mfrakH_{\lambda})}t= \lim\limits_{\substack{n\to\infty}}\prod_{s=1}^{n}\mfrakL_{\frac{1}{n}}
\bigg(f\bigg(\frac{n+1-s}{n}x\bigg)\bigg). \hspace{2cm} \qedhere$$
\end{proof}

Now we turn to the differential analog of Definition~\ref{defn:IntLH}.
Taking $\Psi_\lambda(a)$ as an abstraction of $a^{\frac{1}{\lambda}}$, we propose the following multiplicative notion of derivatives. 

\begin{defn}
Let $G$ be a topological group and $S$ a $(\mfrakL_{\lambda},\mfrakH_{\lambda})$-subset of $G$ and $a:\mathbb{R}\to S$ a map. We say that the map $a$ is {\bf $(\mfrakL_{\lambda},\mfrakH_{\lambda})$-differentiable} at $x$ if, for each $\lambda\in (0,1)$ and $x\in \mathbb{R}$, the element $a(x+\lambda)a(x)^{-1}$ is in $S_{\lambda}$ and $\lim\limits_{\lambda \to 0 }\mfrakH_{\lambda}(a(x+\lambda)a(x)^{-1})$  uniquely exists in $S$. In this case, for each $x \in \mathbb{R}$, we denote the limit by 
\vpb
$$\frac{d_S}{d_S x}\Big(a(x)\Big):=\frac{d_S}{d_S^{(\mfrakL_{\lambda},\mfrakH_{\lambda})}x}\Big(a(x)\Big)
\vpb$$ 
and call it the {\bf $(\mfrakL_{\lambda},\mfrakH_{\lambda})$-derivative of $a$ at $x$}.
\label{defn:DfLH}
\end{defn}

The above derivative satisfies the Leibniz rule in the following sense.

\begin{theorem}
Let $G$ be a topological group and $S$ a $(\mfrakL_{\lambda},\mfrakH_{\lambda})$-subset of $G$. Suppose that the maps $a,b :\mathbb{R}\to S$ are $(\mfrakL_{\lambda},\mfrakH_{\lambda})$-differentiable at each $x\in \RR$. If for each $\lambda\in (0,1)$, $u,v\in S$ and $x\in \mathbb{R}$ the element $\mfrakL_{\lambda} (u)a(x)\mfrakL_{\lambda}(v)a(x)^{-1}$ is in $S_{\lambda}$ and
\vpb
\begin{align*}
F_S:S\times S \rightarrow S,\quad
  (u,v) \mapsto  \lim\limits_{n\to \infty}\mfrakH_{\frac{1}{n}}
\bigg(\mfrakL_{\frac{1}{n}}
(u)
a(x)\mfrakL_{\frac{1}{n}}(v)a(x)^{-1}\bigg)
\vpb
\end{align*}
is a \complim map for each $x\in \mathbb{R}$, then Eq.~\eqref{eq:diffgplh1} holds:
\begin{align*}
        \frac{d_S}{d_S^{(\mfrakL_{\lambda},\mfrakH_{\lambda})}x}\Big(a(x)b(x)\Big)=\lim_{n\to \infty}\mfrakH_{\frac{1}{n}}\bigg[\mfrakL_{\frac{1}{n}}\bigg(\frac{d_S}{d_S^{(\mfrakL_{\lambda},\mfrakH_{\lambda})}x}\Big(a(x)\Big)\bigg)a(x)\mfrakL_{\frac{1}{n}}\bigg(\frac{d_S}{d_S^{(\mfrakL_{\lambda},\mfrakH_{\lambda})}x}\Big(b(x)\Big)\bigg)a(x)^{-1}\bigg].
    \end{align*}
\label{thm:HDfml}
\end{theorem}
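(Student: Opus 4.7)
The plan is to unfold the definition of the $(\mfrakL_\lambda,\mfrakH_\lambda)$-derivative of $a(x)b(x)$, insert a telescoping factor $a(x)^{-1}a(x)$ between $a(x+\lambda)$ and $b(x+\lambda)$, introduce $\mfrakL_\lambda\mfrakH_\lambda$ on each difference quotient via the identity $\mfrakL_\lambda\mfrakH_\lambda|_{S_\lambda}=\id_{S_\lambda}$, and finally apply the synchronized limit hypothesis on $F_S$ to separate the two derivatives.

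Concretely, Definition~\ref{defn:DfLH} gives
$$\frac{d_S}{d_S^{(\mfrakL_\lambda,\mfrakH_\lambda)}x}\big(a(x)b(x)\big)=\lim_{\lambda\to 0}\mfrakH_\lambda\Big(a(x+\lambda)b(x+\lambda)\big(a(x)b(x)\big)^{-1}\Big),$$
and the argument factors as
$$\big(a(x+\lambda)a(x)^{-1}\big)\,a(x)\,\big(b(x+\lambda)b(x)^{-1}\big)\,a(x)^{-1}.$$
By the differentiability hypothesis, both $a(x+\lambda)a(x)^{-1}$ and $b(x+\lambda)b(x)^{-1}$ lie in $S_\lambda$, hence are fixed by $\mfrakL_\lambda\mfrakH_\lambda$. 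Setting
$$u_n:=\mfrakH_{1/n}\big(a(x+1/n)a(x)^{-1}\big), \quad v_n:=\mfrakH_{1/n}\big(b(x+1/n)b(x)^{-1}\big),$$
so that $u_n\to u$ and $v_n\to v$ where $u$ and $v$ are the $(\mfrakL_\lambda,\mfrakH_\lambda)$-derivatives of $a$ and $b$ at $x$, and restricting the $\lambda$-limit to $\lambda=1/n$, the left-hand side becomes
$$\lim_{n\to\infty}\mfrakH_{1/n}\Big(\mfrakL_{1/n}(u_n)\,a(x)\,\mfrakL_{1/n}(v_n)\,a(x)^{-1}\Big).$$

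To conclude, I invoke the synchronized limit hypothesis on $F_S$: by Eq.~\eqref{eq:jx}, the approximating pair $(u_n,v_n)$ may be replaced by its limit $(u,v)$ inside the iterated limit, producing exactly the right-hand side of the claimed Leibniz rule. The membership hypothesis $\mfrakL_\lambda(u)a(x)\mfrakL_\lambda(v)a(x)^{-1}\in S_\lambda$ for $u,v\in S$ ensures that the outer application of $\mfrakH_{1/n}$ is well-defined at each finite stage.

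The main obstacle I anticipate is matching the continuous limit $\lambda\to 0$ in the definition of the derivative with the sequential limit $n\to\infty$ demanded by Definition~\ref{defn:synchronized-limit}. This is addressed by the uniqueness part of differentiability, which forces every sequential sublimit to coincide with the continuous limit, so no information is lost along the cofinal subsequence $\lambda=1/n$. Once restricted, the synchronized hypothesis is tailor-made to permit the required interchange of the inner limit hidden inside $u_n,v_n$ with the outer limit indexed by $n$, finishing the argument.
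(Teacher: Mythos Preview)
Your proposal is correct and follows essentially the same route as the paper's proof: factor the difference quotient of $ab$ via the telescoping insertion $a(x)^{-1}a(x)$, use $\mfrakL_\lambda\mfrakH_\lambda|_{S_\lambda}=\id_{S_\lambda}$ on the individual difference quotients, and invoke the \complim property of $F_S$ to exchange the inner and outer limits. The only cosmetic difference is direction: the paper starts from the right-hand side and simplifies to the derivative of $ab$, while you start from the derivative of $ab$ and reach the right-hand side; your explicit remark on passing from the continuous limit $\lambda\to 0$ to the sequential $\lambda=1/n$ via uniqueness is a point the paper leaves implicit.
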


\begin{remark}
As with the remark made after Theorem~\ref{thm:HIfml},
we cannot yet claim to have a differential operator with limit-weight zero since there still needs to identify a group that is closed under the assignment
\vpb
$$ a \mapsto \frac{d_S}{d_S^{(\mfrakL_{\lambda},\mfrakH_{\lambda})}x}\Big(a(x)\Big).
\vpa
$$
It would be interesting to find the closedness conditions for these operators. It would also be interesting to see whether a type of First Fundamental Theorem of calculus holds for the operators $\frac{d_S}{d_S^{(\mfrakL_{\lambda},\mfrakH_{\lambda})}x}\Big(a(x)\Big)$ and $\int_{0}^{x}a(t)d_S^{(\mfrakL_{\lambda},\mfrakH_{\lambda})}t$. See Proposition~\ref{pp:fftc} for a related result.
\end{remark}

\begin{proof}[Proof of Theorem~\ref{thm:HDfml}]
We have
\vpb
{\small
\begin{align*}
&\ \lim_{n\to \infty}\mfrakH_{\frac{1}{n}}\bigg[\mfrakL_{\frac{1}{n}}\bigg(\frac{d_S}
{d_S^{(\mfrakL_{\lambda},\mfrakH_{\lambda})}x}\Big(a(x)\Big)\bigg)a(x)
\mfrakL_{\frac{1}{n}}\bigg(\frac{d_S}{d_S^{(\mfrakL_{\lambda},\mfrakH_{\lambda})}x}\Big(b(x)
\Big)\bigg)a(x)^{-1}\bigg]\\
=&\ \lim_{\substack{n \to \infty }}\mfrakH_{\frac{1}{n}}\bigg[\mfrakL_{\frac{1}{n}}\bigg(\lim_{\substack{ m \to \infty }}\mfrakH_{\frac{1}{m}}\Big(a\bigg(x+\frac{1}{m}\bigg)a(x)^{-1}\Big)\bigg)a(x)
\mfrakL_{\frac{1}{n}}\bigg(\lim_{\substack{ m \to \infty }}\mfrakH_{\frac{1}{m}}\bigg(b\Big(x+\frac{1}{m}\Big)b(x)^{-1}\bigg)\bigg)a(x)^{-1}\bigg]\\
&\hspace{5cm}\text{\bigg(by the definition of $\frac{d_S}{d_S^{(\mfrakL_{\lambda},\mfrakH_{\lambda})}x}\Big(a(x)\Big)$\bigg)}\\
=&\ \lim_{\substack{n \to \infty}}\mfrakH_{\frac{1}{n}}\bigg[\mfrakL_{\frac{1}{n}}
\bigg(\mfrakH_{\frac{1}{n}}\Big(a\bigg(x+\frac{1}{n}\bigg)a(x)^{-1}\Big)\bigg)
a(x)\mfrakL_{\frac{1}{n}}\bigg(\mfrakH_{\frac{1}{n}}\Big(b\bigg(x+\frac{1}{n}\bigg)b(x)^{-1}\Big)
\bigg)a(x)^{-1}\bigg]\\
&\hspace{5cm}\text{(by the \complim property of $F_S$)}\\
=&\ \lim_{\substack{n \to \infty}}\mfrakH_{\frac{1}{n}}\bigg[a(x+\frac{1}{n})a(x)^{-1}
a(x)b(x+\frac{1}{n})b(x)^{-1}a(x)^{-1}\bigg]\\
=& \lim_{\substack{n \to \infty}}\mfrakH_{\frac{1}{n}}
\bigg[a\bigg(x+\frac{1}{n}\bigg)b\bigg(x+\frac{1}{n}\bigg)
b(x)^{-1}a(x)^{-1}\bigg]\\
=&\ \lim_{\substack{n \to \infty}}
\mfrakH_{\frac{1}{n}}\bigg[a\bigg(x+\frac{1}{n}\bigg)
b\bigg(x+\frac{1}{n}\bigg)
\Big(a(x)b(x)\Big)^{-1}\bigg] \\
=& \frac{d_S}{d_S^{(\mfrakL_{\lambda},\mfrakH_{\lambda})}x}\Big(a(x)b(x)\Big). \qedhere
\vpe
\end{align*}
}
\vpd
\end{proof}
\vspace{-.5cm}

\section{Rota-Baxter $\mathbb{Q}$-groups and differential $\mathbb{Q}$-groups}
\label{sec:qrgroup}

To provide both motivations and applications of the above general results, we now focus on a special case of a Rota-Baxter group with weight $\lim\limits_{n\to\infty}(\mfrakL_{\frac{1}{n}},\mfrakH_{\frac{1}{n}})$, namely a Rota-Baxter $\mathbb{Q}$-group. Here the constructions can be made explicitly and are directly related to the classical notions of integrations.

\subsection{Rota-Baxter $\QQ$-groups with weight zero}

Let us begin with the following concept.

\begin{defn}
\label{defn:rgroup}
\begin{enumerate}
\item A topological group $G$ is called a {\bf $\mathbb{Q}$-group} if for each $n\in \mathbb{Z}\backslash\{0\}$, the map
$\pown:G \rightarrow G, \, a\mapsto a^{n}$
is bijective and if the limit $\lim\limits_{n\to\infty}a^{\frac{1}{n}}$ uniquely exists and equals to $e$ for each $a\in G$.
Further, if $G$ is a Lie group, then it is called a {\bf Lie $\mathbb{Q}$-group.}
\item A topological group $G$ is called an {\bf $\mathbb{R}$-group} if for each $a\in G$, there is uniquely a one-parameter subgroup $K_a(\RR)$ defined by a continuous map
$K_a: \RR \to G$
such that $K_a(0)=e$, $K_a(1)=a$ and $K_a(s+t)=K_a(s)K_a(t)$.
For a fixed $r\in \RR$, we define the {\bf $r$-th power map}
\vpa
\begin{equation*}
	P_r:G\to G, \quad a\mapsto a^{r} :=K_a(r).
	\label{eq:power}
\vpa
\end{equation*}
\end{enumerate}
\end{defn}

\begin{lemma}
Let $G$ be an $\RR$-group. Then for each $r$ in $\RR\backslash\{0\}$, $P_{\frac{1}{r}}P_r=\id_G$ and an $\RR$-group is a $\mathbb{Q}$-group and a $(P_{\lambda},P_{\frac{1}{\lambda}})$-group.
\end{lemma}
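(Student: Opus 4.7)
The plan is to first prove $P_{1/r} P_r = \id_G$ for any $r \in \RR \setminus \{0\}$, since both of the remaining claims will follow quickly from this. Fix $a \in G$ and set $b := P_r(a) = K_a(r)$. I will show that $K_b$ equals the rescaled one-parameter subgroup $t \mapsto K_a(rt)$, so that $P_{1/r}(b) = K_b(1/r) = K_a(1) = a$.

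The key step is the following identification via uniqueness. Define $L : \RR \to G$ by $L(t) := K_a(rt)$. Then $L$ is continuous (composition of $K_a$ with scalar multiplication by $r$), and satisfies $L(0) = K_a(0) = e$, $L(1) = K_a(r) = b$, and
\[
L(s+t) = K_a(rs + rt) = K_a(rs) K_a(rt) = L(s) L(t).
\]
By the uniqueness of one-parameter subgroups assumed in Definition~\ref{defn:rgroup}, $K_b = L$, so $K_b(1/r) = K_a(1) = a$. This gives $P_{1/r} P_r = \id_G$, and by symmetry (swapping the roles of $r$ and $1/r$) also $P_r P_{1/r} = \id_G$. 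Hence each $P_r$ is a bijection with inverse $P_{1/r}$.

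From this, both remaining claims are immediate. Specializing to integers $n \in \ZZ \setminus \{0\}$, the map $P_n : a \mapsto a^n$ is bijective. For the limit condition, the continuity of each $K_a$ together with $K_a(0) = e$ gives
\[
\lim_{n \to \infty} a^{1/n} = \lim_{n \to \infty} K_a(1/n) = K_a(0) = e,
\]
with uniqueness of the limit coming from the uniqueness of $K_a$; hence $G$ is a $\QQ$-group. Finally, taking $\mfrakL := P_\lambda$ and $\mfrakH := P_{1/\lambda}$ in Definition~\ref{defn:lhrbg}\eqref{it:lhrbg1}, the identity $P_\lambda P_{1/\lambda} = \id_G$ already established shows that $G$ is a $(P_\lambda, P_{1/\lambda})$-group.

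The only nontrivial point is the uniqueness step $K_b = L$; this is really just an application of the defining uniqueness axiom for one-parameter subgroups in an $\RR$-group, so I do not anticipate any real obstacle, provided that axiom is read (as the definition suggests) to apply to every element of $G$.
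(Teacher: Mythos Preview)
Your proof is correct and follows essentially the same approach as the paper: both arguments identify $K_{a^r}$ with the rescaled one-parameter subgroup $t \mapsto K_a(rt)$ via the uniqueness axiom, deduce $(a^r)^{1/r}=a$, and then read off the $\QQ$-group and $(P_\lambda,P_{1/\lambda})$-group properties. The only cosmetic difference is that the paper phrases it as two maps $f(x)=K_a(rx)$ and $g(x)=K_{a^r}(x)$ agreeing, while you name the element $b=a^r$ and the map $L$; the content is identical.
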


\begin{proof}
Notice that for each $a\in G,r\in \mathbb{R}\backslash \{0\}$, the two maps 
\vpa
$$f:x\mapsto K_a(rx), \quad g:x\mapsto K_{a^r}(x),
\vpa
$$ 
are both one-parameter subgroup $K_a(\RR)$ satisfying $f(0)=g(0)=e$ and $f(1)=g(1)=a^{r}$. So by the uniqueness of the one-parameter subgroup, we have $f=g$. Thus  
\vpb
\begin{equation}
(a^{r})^{\frac{1}{r}}=K_{a^r}\bigg(\frac{1}{r}x\bigg)=K_{a}\bigg(r\frac{1}{r}\bigg)=a,
\label{eq:opg}
\vpb
\end{equation}
which implies $P_{\frac{1}{r}}P_r=\id_G$ and $G$ is a $(P_{\lambda},P_{\frac{1}{\lambda}})$-group. Now
 taking respectively $r:=n$ and $r:=\frac{1}{n}$ in Eq.~\eqref{eq:opg} and noting that $\lim\limits_{n\to\infty}a^{\frac{1}{n}}=K_a(0)=e$, we conclude that an $\mathbb{R}$-group is a $\mathbb{Q}$-group.
\end{proof}

We expose an example.

\begin{exam} \cite[Theorem 1.127]{Kn}
Let $G$ be a simply connected nilpotent analytic group with  Lie algebra $g$.
Then the exponential map $\exp$ is a diffeomorphism from $\frakg$ onto $G$. So for each $r\in \mathbb{R}$ and $e^{u}\in G$, we can define the $r$ index map of $e^{u}$ as $(e^{u})^{r}:=e^{ru}$. The inverse of $\pown:e^u\mapsto e^{nu}$ is $\pown^{-1}:e^u\mapsto e^{\frac{u}{n}}$ because of the bijectivity of $\exp:\frakg \rightarrow G$. Thus $G$ is a $\mathbb{Q}$-group. For each $e^u\in G$, there is a continuous map $K_{e^{u}}: \mathbb{R}\rightarrow G$, $K_{e^{u}}(r)=e^{r u}$, where 
\vpa
$$K_{e^{u}}(s+t)=e^{(s+t)u}=e^{su+tu}=e^{su}e^{tu}=K_{e^{u}}(s) K_{e^{u}}(t).
\vpa
$$
Thus $K_{e^{u}}(\cdot)$ is a one-parameter subgroup. If there is another one-parameter subgroup $K(t):\mathbb{R}\rightarrow G$ satisfying $K(0)=e^{0}$ and $K(1)=u$, then for each $n\in\mathbb{Z}$ and $m\in \mathbb{P}$, we get $K(n)=e^{nu}$, and $K(\frac{n}{m})=e^{\frac{n}{m}u}$. By the continuity of $K$, for each $r\in\RR$, $K(r)=e^{ru}$ holds. Therefore, $K=K_{e^{u}}$. Thus $G$ is an $\mathbb{R}$-group.
\end{exam}

\begin{remark}
On an $\RR$-group $G$, take $\lambda\in {\bfk} \backslash \{0\}$ and $\mfrakL(a) := a^\lambda$, $\mfrakH(a) :=  a^{\frac{1}{\lambda}}$. Then Eq.~(\ref{eq:rbgw}) gives the notion of the {\bf Rota-Baxter operator with weight $\lambda$}:
\vpa
$$\frakB(a) \frakB(b) =  \frakB \Big(   \big( a^{\lambda} \frakB(a) b^{\lambda} \frakB(a)^{-1} \big)^{\frac{1}{\lambda}} \Big), \quad  a,b\in G,
\vpa
$$
a notion that first appeared formally in~\cite{BN}.
\label{exam:wtlambda}
\end{remark}

Now we give another example of of $(\mfrakL_{\lambda},\mfrakH_{\lambda})$-subsets.
\begin{exam}
Let $g$ be a $\mathbb{C}^{n\times n}$ matrix Lie algebra with exponential map $\exp$ bijective. Thus $\exp(g)$ an $\mathbb{R}$-group. Define $G:= \exp(\frakg) \cdot \{I,-I\}$, where $I$ is the identity matrix in $\mathbb{C}^{n\times n}$ and $\cdot$ is the multiplication of matrices. Define the set $S:=\exp(\frakg)\cdot (-I)$ and maps 
\vpa
$$
\mfrakL_{\lambda}:G\rightarrow G, a\mapsto\Bigg\{
\begin{array}{cc}
         -I \cdot P_{\lambda}(a),& \text{ if } a\notin S, \\
         P_{\lambda}(-I\cdot a),& \text{ if } a\in S,
\end{array}  
\quad \mfrakH_{\lambda}:G\rightarrow G, a\mapsto \Bigg\{ 
\begin{array}{cc}
      P_{\frac{1}{\lambda}}(-I \cdot a),   &  \text{ if } a \in S, \\
     -I \cdot P_{\frac{1}{\lambda}}(a),   &  \text{ if } a \notin S.
\end{array}  
\vpa
$$
For each $a$ in $S$, we have 
$$\mfrakL_{\lambda}\mfrakH_{\lambda}(a)=-I\cdot P_{\lambda}P_{\frac{1}{\lambda}}(-I\cdot a)=a\,\text{ and }\, \lim\limits_{n\to\infty}\mfrakL_{\frac{1}{n}}(a)=\lim\limits_{n\to\infty}P_{\frac{1}{n}}(-I\cdot a)=I.$$
Thus $S$ is a $(\mfrakL_{\lambda},\mfrakH_{\lambda})$-subset of $G$.
\label{ex:ljfz}
\end{exam}

As an example of Definition~\ref{defn:lhnrb}, we have the following concept by taking $\mfrakL_{\frac{1}{n}}:=\pown^{-1}$ and $\mfrakH_{\frac{1}{n}}:=\pown$.

\begin{defn}
\label{defn:rbg0}
Let $G$ be a $\mathbb{Q}$-group and also a $\lim\limits_{n\to\infty}(P_n^{-1},P_n)$-group.
A map $\frakB:G\rightarrow G$ is called a {\bf Rota-Baxter operator with limit-weight zero}
if for each $(a,b)\in G\times G$, $\lim\limits_{n\to \infty}  \Big( a^{\frac{1}{n}} \frakB(a) b^{\frac{1}{n}} \frakB(a)^{-1}\Big) ^n $ uniquely exists and
\vpa
\begin{equation}
\frakB(a) \frakB(b) =  \frakB \bigg(\lim_{n\to \infty}  \Big( a^{\frac{1}{n}} \frakB(a) b^{\frac{1}{n}} \frakB(a)^{-1}\Big) ^n \bigg).
\label{eq:rbo0}
\vpa
\end{equation}
Then we call $(G,\frakB)$ a {\bf Rota-Baxter $\mathbb{Q}$-group with limit-weight zero}. Further, if $G$ is a Lie group and $\frakB$ is a smooth map, then we call  $(G,\frakB)$ a {\bf Rota-Baxter Lie $\mathbb{Q}$-group with limit-weight zero}.
\end{defn}

The above notion of Rota-Baxter $\mathbb{Q}$-group with limit-weight zero is justified by the following relation with a Rota-Baxter Lie algebra with weight zero.
	
\begin{theorem}
Let $(G, \frakB)$ be a Rota-Baxter Lie $\mathbb{Q}$-group with limit-weight zero.
Let $\frakg = T_e G$ be the Lie algebra of $G$ and $B:\frakg \rightarrow \frakg$
the tangent map of $\frakB$ at the identity $e$. Then $(\frakg, B)$ is a Rota-Baxter Lie algebra with limit-weight $\lim\limits_{n\to\infty}(\frac{1}{n}\id_\frakg ,n\,\id_\frakg)$, and hence a Rota-Baxter Lie algebra with weight zero by Remark~\ref{rk:weight 0}.
\label{thm:rbg2rbl0}
\end{theorem}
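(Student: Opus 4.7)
The plan is to obtain Theorem~\ref{thm:rbg2rbl0} as a direct specialization of Theorem~\ref{thm:tanglim} applied to the pair of smooth maps $\mfrakL_{\frac{1}{n}} := P_n^{-1}$ and $\mfrakH_{\frac{1}{n}} := P_n$ on $G$. The first step is to check that a Rota-Baxter $\mathbb{Q}$-group with limit-weight zero in the sense of Definition~\ref{defn:rbg0} really is a Rota-Baxter Lie group with limit-weight $\lim\limits_{n\to\infty}(P_n^{-1},P_n)$ in the sense of Definition~\ref{defn:lhnrb}, and moreover with limit-weight zero there. The relation $P_{\frac{1}{n}} P_n = \id_G$ proved in the lemma preceding Definition~\ref{defn:rbg0} gives the $(\mfrakL_{\frac{1}{n}},\mfrakH_{\frac{1}{n}})$-group structure; the hypothesis of Definition~\ref{defn:rbg0} about the unique existence of the limit in Eq.~\eqref{eq:rbo0} rewrites verbatim as Eq.~\eqref{eq:limitwt} in this specialization; and the $\mathbb{Q}$-group condition $\lim\limits_{n\to\infty} a^{\frac{1}{n}} = e$ is exactly the \limwtzero condition $\lim\limits_{n\to\infty}\mfrakL_{\frac{1}{n}}(a)=e$. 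Unitality of each pair $(P_n^{-1},P_n)$ is immediate since $e^r=e$ for every $r$.

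Next, I would compute the tangent maps. Via the exponential map, for each $u\in\frakg$ one has $P_n(\exp(tu)) = \exp(tu)^n = \exp(ntu)$, so differentiating at $t=0$ gives $(P_n)_{*e}(u) = nu$, that is, $\MfrakH_{\frac{1}{n}} = n\,\id_\frakg$. Similarly $P_n^{-1}(\exp(tu)) = \exp(tu/n)$, whence $\MfrakL_{\frac{1}{n}} = \frac{1}{n}\id_\frakg$. In particular $\MfrakL_{\frac{1}{n}}\MfrakH_{\frac{1}{n}} = \id_\frakg$ (as also guaranteed by Lemma~\ref{lemma:idl}).

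With these identifications in place, Theorem~\ref{thm:tanglim} applies directly: since $(G,\frakB)$ is a Rota-Baxter Lie group with limit-weight zero for the pair $(P_n^{-1},P_n)$, the tangent operator $B = \frakB_{*e}$ is a Rota-Baxter Lie algebra operator with limit-weight $\lim\limits_{n\to\infty}(\frac{1}{n}\id_\frakg, n\,\id_\frakg)$, and the conclusion $\lim\limits_{n\to\infty}\MfrakL_{\frac{1}{n}}(u)=\lim\limits_{n\to\infty}\tfrac{u}{n}=0$ (also obtainable from Eq.~\eqref{eq:tangentzero}) confirms this is limit-weight zero at the Lie algebra level. Finally, invoking Remark~\ref{rk:weight 0} with $\MfrakL_{\frac{1}{n}} = \frac{1}{n}\id_\frakg$ and $\MfrakH_{\frac{1}{n}} = n\,\id_\frakg$ collapses the defining identity Eq.~\eqref{eq:rblim} to the classical Rota-Baxter identity of weight zero, giving the second assertion.

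The only nontrivial points worth checking carefully are (i) that the synchronization hypothesis of Definition~\ref{defn:lhnrb}\eqref{it:lhnrb1} is automatic here because in any $\mathbb{Q}$-group the map $(a,b)\mapsto(a^{\frac{1}{n}}b^{\frac{1}{n}})^n$ reduces via the Baker-Campbell-Hausdorff expansion to $ab$ in the Trotter-product sense, where the computation in the proof of Lemma~\ref{lemma:groupyr} already carries over verbatim; and (ii) that the uniqueness clauses in Definition~\ref{defn:lhnrb} for the limits involving $\frakB$ are guaranteed by the uniqueness of $n$-th roots in a $\mathbb{Q}$-group together with the hypothesis of Definition~\ref{defn:rbg0}. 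I expect the only mild obstacle is wording these two verifications precisely enough to legitimately invoke Theorem~\ref{thm:tanglim}; once that is done, the proof collapses to the chain of identifications above.
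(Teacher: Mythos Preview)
Your proposal is correct and follows essentially the same route as the paper: identify the tangent maps $(P_n)_{*e} = n\,\id_\frakg$ and $(P_n^{-1})_{*e} = \tfrac{1}{n}\id_\frakg$, invoke the limit-weight tangent theorem (the paper cites Theorem~\ref{thm:tgop} but uses the limit formula), and let $\tfrac{1}{n}[u,v]\to 0$ to obtain the weight-zero identity. One small clarification on your item~(i): the synchronization hypothesis need not be derived from a Trotter-type argument, since Definition~\ref{defn:rbg0} already assumes that $G$ is a $\lim\limits_{n\to\infty}(P_n^{-1},P_n)$-group, so it is part of the hypotheses.
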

	
\begin{proof}
We only need to verify
\vpa
$$\left [ B(u), B(v) \right ] = B\Big([B(u),v]+[u,B(v)]\Big), \quad u,v\in \frakg,
\vpa$$
Since $P_n$ and $P_n^{-1}$ are unital maps and, for each $u\in\frakg$, we have
\vpa
$$(\pown)_{*e}(u)=nu\,\text{ and }\,(\pown^{-1})_{*e}(u)=\frac{u}{n}.
\vpa$$
By Theorem~\ref{thm:tgop},
\vpb
\begin{align*}
[B(u),B(v)] &=  B\bigg(\lim_{n\to\infty}(\pown^{-1})_{*e}\Big([B(u),(\pown)_{*e}(v)]+[(\pown)_{*e}(u),B(v)]+[(\pown)_{*e}(u),(\pown)_{*e}(v)]\Big) \bigg)\\
    &= B\Big(\lim_{n\to\infty} [B(u),v]+[u,B(v)]+\frac{1}{n}[u,v]\Big)= B\Big( [B(u),v]+[u,B(v)]\Big). \qedhere
\end{align*}
\end{proof}

The following concept of the integral on an $\mathbb{R}$-group is a special case of Definition ~\ref{defn:IntLH}, by taking $\mfrakL_\lambda:a\mapsto P_{\lambda}(a) = a^\lambda$ and $\mfrakH_\lambda:a\mapsto P_{\frac{1}{\lambda}}(a) = a^{\frac{1}{\lambda}}$ for $\lambda\neq 0$.

\begin{defn}
Let $G$ be an $\mathbb{R}$-group and $a:\mathbb{R}\to G$ a map. If $\lim\limits_{|P|\to 0 }\prod\limits_{k=1}^{n}\Big(a(\xi_{n+1-k})\Big)^{\triangle_{n+1-k}}$ uniquely exists for arbitrary partitions $\Delta$ of $[\ell, m]$ and arbitrary choices of $\xi_i$ in $[x_i,x_{i-1}], 1\leq i\leq n$, then we say that $a(x)$ is {\bf $\mathbb{R}$-integrable} on $[\ell,m]$, denote the limit by $\int_{\ell}^{m}a(x)d_Gx$ and call it the {\bf $\mathbb{R}$-integral of the map $a$ on $[\ell,m]$}.
\label{defn:IntR}
\end{defn}

\begin{remark}
In the special case when the $\mathbb{R}$-group $G$ is $\mathbb{R}$ and the multiplication of $G$ is the addition of $\mathbb{R}$, then
\vpb
\[
\lim\limits_{|P|\to 0 }\prod\limits_{k=1}^{n}\Big(a(\xi_{n+1-k})\Big)^{\triangle_{n+1-k}} = \lim\limits_{|P|\to 0 }\sum\limits_{k=1}^{n}{\triangle_{n+1-k}}a(\xi_{n+1-k})
\vpa
\]
is simply the Riemann integral of a real-valued function on $\mathbb{R}$.
See Lemma~\ref{lemma:intbm} for further examples.
\label{rk:Riem}
 \end{remark}

The following is the integration-by-parts formula for $\mathbb{R}$-integrals.

\begin{coro}
\label{co:rgpint}
Let $G$ be an $\mathbb{R}$-group which is also a \compgroup group, and $a, b:[0,t]\to G$ be two $\mathbb{R}$-integrable maps for each $t\in [0,x]$.
Suppose that the map
\[
[0,x]\to G,\quad t\mapsto \lim\limits_{n\to \infty}\bigg(a(t)^{\frac{1}{n}}\Big(\int_{0}^{t}a(s)d_Gs\Big) b(t)^{\frac{1}{n}}\Big(\int_{0}^{t}a(s)d_Gs\Big)^{-1}\bigg)^{n}
\]
is \compint on $[0,x]$, as defined in Definition~\ref{defn:intsnow}~\eqref{it:int-commutative}. Then
    \begin{align*}
        \bigg(\int_{0}^{x}a(t)d_Gt\bigg)\bigg(\int_{0}^{x}b(t)d_Gt\bigg)=\int_{0}^{x}\lim_{n\to \infty}\bigg(a(t)^{\frac{1}{n}}\Big(\int_{0}^{t}a(s)d_Gs\Big) b(t)^{\frac{1}{n}}\Big(\int_{0}^{t}a(s)d_Gs\Big)^{-1}\bigg)^{n}d_Gt.
    \end{align*}
\end{coro}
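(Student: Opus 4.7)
The plan is to derive this corollary as a direct specialization of Theorem~\ref{thm:HIfml} to the case $\mfrakL_\lambda := P_\lambda$, $\mfrakH_\lambda := P_{\frac{1}{\lambda}}$, and $S := G$. The bulk of the work is therefore a verification that all the hypotheses of Theorem~\ref{thm:HIfml} are satisfied in this setting, and that its conclusion translates verbatim into the one stated here.

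First I would verify the structural hypothesis: $G$ itself is a $(\mfrakL_\lambda,\mfrakH_\lambda)$-subset of $G$ with limit-weight zero in the sense of Definition~\ref{defn:ppgroup}. Indeed, by the lemma on $\RR$-groups we have $P_\lambda P_{\frac{1}{\lambda}}=\id_G$, so $G_\lambda=G\cap \mfrakH_\lambda^{-1}(G)=G$ and $(\mfrakL_\lambda\mfrakH_\lambda)|_G=\id_G$; and since an $\RR$-group is a $\QQ$-group, $\lim_{n\to\infty}\mfrakL_{\frac{1}{n}}(a)=\lim_{n\to\infty}a^{\frac{1}{n}}=e$ for every $a\in G$. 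Moreover the $\RR$-integral of Definition~\ref{defn:IntR} coincides, Riemann sum by Riemann sum, with the $(\mfrakL_\lambda,\mfrakH_\lambda)$-integral of Definition~\ref{defn:IntLH}, because $\mfrakL_{\triangle_{n+1-k}}(a(\xi_{n+1-k}))=a(\xi_{n+1-k})^{\triangle_{n+1-k}}$ by definition of $P_\lambda$. Consequently the $\RR$-integrability of $a$ and $b$ on $[0,t]$ for every $t\in[0,x]$ becomes the $(\mfrakL_\lambda,\mfrakH_\lambda)$-integrability hypothesis of Theorem~\ref{thm:HIfml}.

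Next I would discharge the two remaining hypotheses of the theorem. The condition \eqref{eq:inslam}, namely $\mfrakL_\lambda(a(t))\,\ad_{(\int_0^t a(s)\,d_S s)}\,\mfrakL_\lambda(b(t))\in S_\lambda$, is automatic because $S_\lambda=G$. The synchronized-integrability condition on the map
\[
t\;\mapsto\;\lim_{n\to\infty}\mfrakH_{\frac{1}{n}}\!\Big(\mfrakL_{\frac{1}{n}}(a(t))\,\ad_{(\int_0^t a(s)d_S s)}\,\mfrakL_{\frac{1}{n}}(b(t))\Big)
\]
translates under $\mfrakL_{\frac{1}{n}}=P_{\frac{1}{n}}$, $\mfrakH_{\frac{1}{n}}=P_n$ into exactly the synchronized-integrability hypothesis assumed in the corollary, since $P_n(x)=x^n$ and $P_{\frac{1}{n}}(a(t))=a(t)^{\frac{1}{n}}$. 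Likewise, the synchronized-group hypothesis of Theorem~\ref{thm:HIfml} on $S$ is precisely the assumption that $G$ is a synchronized group, read off Definition~\ref{defn:intsnow}.

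Finally, I would invoke Theorem~\ref{thm:HIfml} and rewrite the right-hand side of its conclusion using $\mfrakH_{\frac{1}{n}}=P_n$, i.e.\ $\mfrakH_{\frac{1}{n}}(u)=u^n$, which yields the displayed identity of the corollary. I do not anticipate a real obstacle here: the only point that deserves any care is checking that the various limits really land inside $G$ (rather than outside $S_\lambda$), but with $S=G$ this is vacuous. The whole proof should amount to a couple of lines recording ``apply Theorem~\ref{thm:HIfml} with $(\mfrakL_\lambda,\mfrakH_\lambda)=(P_\lambda,P_{\frac{1}{\lambda}})$ and $S=G$.''
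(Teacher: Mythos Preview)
Your proposal is correct and matches the paper's own proof, which consists of the single sentence ``It follows from Theorem~\ref{thm:HIfml} by taking $\mfrakL_{\frac{1}{n}}:=\pown^{-1}$ and $\mfrakH_{\frac{1}{n}}:=\pown$.'' Your more careful verification of the hypotheses (that $S_\lambda=G$ makes \eqref{eq:inslam} automatic, that the $\RR$-integral agrees with the $(\mfrakL_\lambda,\mfrakH_\lambda)$-integral, etc.) is a welcome elaboration but not a different argument.
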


\begin{proof}
It follows from Theorem ~\ref{thm:HIfml} by taking $\mfrakL_{\frac{1}{n}}:=\pown^{-1}$ and $\mfrakH_{\frac{1}{n}}:=\pown$.
\end{proof}

\subsection{Exponential Rota-Baxter groups with limit-weight zero}
In this subsection, we calculate Rota-Baxter operators with limit-weight zero in some special cases where the limit in Definition~\ref{defn:rbg0} can be eliminated.

Suppose that $G$ is a Lie group with a matrix Lie algebra $\frakg$ so that $\exp:\frakg \rightarrow G$ is bijective. Then we can write
\vpb
\begin{equation}
\label{eq:expA}
\exp(X)=\sum\limits_{k=1}^{\infty}\frac{X^{k}}{k!}, \quad  X\in \frakg.
\vpb
\end{equation}
Since the map $\pown:G \rightarrow G, \, a\mapsto a^{n}$ is bijective for each $n\in \mathbb{P}$, $G$ is a $\mathbb{Q}$-group.
Let $\frakB:G\rightarrow G$ be an operator and $X\in \frakg$.
There is a unique $Y_X\in g$ such that $\frakB(\exp(X)) = \exp(Y_X)$. Hence we obtain a map 
\vpc
\begin{equation}
\widetilde\frakB:\frakg\rightarrow\frakg, \quad X\mapsto Y_X,
\label{eq:tilfb}
\vpa
\end{equation}
called the {\bf \mcorr of $\frakB$ on $\frakg$}.

\begin{lemma}\cite[Proposition 3.35]{H1}
Let $X\in \mathbb{C}^{n\times n}$. Then
\vpa
\begin{equation}
\lim\limits_{n\to \infty}\bigg(1+\frac{X}{n}+o\Big(\frac{1}{n}\Big)\bigg)^{n}=\exp(X), \label{eq:expe}
\end{equation}
for the $\exp(X)$ given in Eq.~\eqref{eq:expA}.
\end{lemma}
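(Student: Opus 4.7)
The plan is to reduce the identity to a statement about the matrix logarithm combined with the continuity of the matrix exponential. Write $A_n := I + X/n + R_n$, where $R_n \in \mathbb{C}^{n\times n}$ encodes the $o(1/n)$ term, so that $n\|R_n\|\to 0$ as $n\to\infty$ in some fixed submultiplicative norm on $\mathbb{C}^{n\times n}$. For all sufficiently large $n$, we have $\|A_n - I\| < 1$, hence the matrix logarithm $\log(A_n) = \sum_{k\ge 1}(-1)^{k+1}(A_n-I)^k/k$ converges absolutely and satisfies $\exp(\log(A_n)) = A_n$, so in particular $A_n^n = \exp(n\log(A_n))$.

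Next, I would estimate $\log(A_n)$ carefully. Setting $Y_n := X/n + R_n$, we have $A_n = I + Y_n$ with $\|Y_n\| = O(1/n)$. Separating off the linear term of the log series, $\log(A_n) = Y_n + E_n$ where $\|E_n\| \le \sum_{k\ge 2}\|Y_n\|^k/k \le \|Y_n\|^2/(1-\|Y_n\|) = O(1/n^2)$. Consequently $n\log(A_n) = X + nR_n + nE_n$, and both $nR_n$ and $nE_n$ tend to the zero matrix, giving $n\log(A_n) \to X$.

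Finally, since the matrix exponential is continuous, we conclude $\lim_{n\to\infty} A_n^n = \lim_{n\to\infty}\exp(n\log(A_n)) = \exp(X)$, which is the claim. This matches the classical scalar identity $\lim_{n\to\infty}(1 + x/n)^n = e^x$ and extends it to a perturbation by $o(1/n)$.

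The main (and essentially only) obstacle is verifying that the $o(1/n)$ remainder interacts properly with the chosen matrix norm: one must ensure both that the logarithm series converges for large $n$ and that the quadratic tail bound $\|Y_n\|^2/(1-\|Y_n\|)$ truly contributes $o(1/n)$ after multiplication by $n$. Once a single submultiplicative norm is fixed, these reduce to elementary scalar estimates, which is why only a reference to Hall's textbook is needed.
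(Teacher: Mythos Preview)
The paper does not supply its own proof of this lemma; it is quoted as a black box from Hall's textbook and used without further argument. Your proof is correct and is essentially the standard one: pass to the matrix logarithm via the power series, isolate the linear term, bound the quadratic tail by $\|Y_n\|^2/(1-\|Y_n\|) = O(1/n^2)$, and conclude by continuity of $\exp$. This is in fact the approach taken in Hall's book, so there is nothing to compare.

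One cosmetic remark: the symbol $n$ is overloaded in the statement (it is both the matrix dimension in $\mathbb{C}^{n\times n}$ and the limit index), and your line ``$R_n\in\mathbb{C}^{n\times n}$'' inherits this ambiguity. It is harmless once understood, but renaming the matrix size to, say, $d$ would make your write-up cleaner.
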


As an immediate consequence, we have the following result.

\begin{coro}
\label{coro:0mult}
Let $G$ be a complex matrix Lie group with a complex matrix Lie algebra $\frakg$ and
$\exp:\frakg \rightarrow G$ bijective.
Then $G$ is a complete  $\lim\limits_{n\to \infty}(\pown^{-1},\pown)$-group with the multiplication given by
\begin{equation}
\label{eq:xinfy}
\exp(X)\cdot_{\frac{1}{\infty}}\exp(Y) :=\exp(X+Y), \quad \exp(X),\exp(Y)\in G.
\end{equation}
\end{coro}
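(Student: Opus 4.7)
The plan is to verify the three clauses of Definition~\ref{defn:lhnrb}\eqref{it:lhnrb1} for the pair $(\mfrakL_{\frac{1}{n}},\mfrakH_{\frac{1}{n}}):=(\pown^{-1},\pown)$ on $G$, and simultaneously to identify the resulting synchronized limit with the operation $\cdot_{\frac{1}{\infty}}$ of~\eqref{eq:xinfy}. The first clause is immediate: bijectivity of $\exp$ together with $\exp(X)^{n}=\exp(nX)$ gives $\pown^{-1}(\exp X)=\exp(X/n)$, whence $\pown^{-1}\pown=\id_{G}$, so $G$ is a $(\pown^{-1},\pown)$-group for every $n\in\PP$.

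For the second clause I would compute the pointwise limit via a Lie--Trotter style expansion. With $a=\exp X$ and $b=\exp Y$, multiplying the two exponential series gives
\begin{equation*}
\exp(X/n)\exp(Y/n)=I+\frac{X+Y}{n}+o\Big(\frac{1}{n}\Big),
\end{equation*}
and invoking Eq.~\eqref{eq:expe} yields
\begin{equation*}
\pown\big(\pown^{-1}(a)\pown^{-1}(b)\big)=\big(\exp(X/n)\exp(Y/n)\big)^{n}\longrightarrow \exp(X+Y),
\end{equation*}
which equals $a\cdot_{\frac{1}{\infty}} b$ by~\eqref{eq:xinfy}; uniqueness of the limit is automatic in the matrix norm topology.

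It remains to check the \complim property from Definition~\ref{defn:synchronized-limit}. Given $a_{k}=\exp X_{k}\to a=\exp X$ and $b_{k}=\exp Y_{k}\to b=\exp Y$, continuity of $\exp^{-1}$ forces $X_{k}\to X$ and $Y_{k}\to Y$ in $\frakg$, so the sequences $\{X_{k}\}$ and $\{Y_{k}\}$ are norm-bounded. This uniform bound makes the nonlinear tail in
\begin{equation*}
\exp(X_{n}/n)\exp(Y_{n}/n)=I+\frac{X_{n}+Y_{n}}{n}+R_{n}
\end{equation*}
satisfy $\|R_{n}\|=O(1/n^{2})$ with a constant depending only on those bounds, and writing $X_{n}+Y_{n}=X+Y+\epsilon_{n}$ with $\epsilon_{n}\to 0$ absorbs the remaining drift into a further $o(1/n)$ error. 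The proof of Eq.~\eqref{eq:expe} then applies verbatim to give $(I+(X+Y)/n+o(1/n))^{n}\to\exp(X+Y)=a\cdot_{\frac{1}{\infty}} b$, which is Eq.~\eqref{eq:jx}.

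The main obstacle is the last step: Eq.~\eqref{eq:expe} as stated delivers only pointwise convergence for a fixed $X$, whereas the \complim condition demands diagonal convergence along sequences $X_{n}+Y_{n}\to X+Y$ with a uniformly small remainder. The crucial ingredient is the uniform $O(1/n^{2})$ bound on $R_{n}$ arising from boundedness of $\{X_{k}\},\{Y_{k}\}$; once this is in hand the reduction to Eq.~\eqref{eq:expe} is mechanical.
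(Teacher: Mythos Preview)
Your proposal is correct and follows essentially the same route as the paper: verify $\pown^{-1}\pown=\id_G$, expand $\exp(X/n)\exp(Y/n)=I+(X+Y)/n+o(1/n)$ and apply Eq.~\eqref{eq:expe} for the pointwise limit, then handle the \complim clause by writing $X_n+Y_n=(X+Y)+\varepsilon_n$ with $\varepsilon_n\to 0$ and reapplying Eq.~\eqref{eq:expe}. Your treatment is in fact slightly more careful than the paper's, which simply asserts the diagonal limit without isolating the uniform $O(1/n^2)$ bound on the remainder or invoking continuity of $\exp^{-1}$ to pass from convergence in $G$ to convergence in $\frakg$.
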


\begin{proof}
We first have
\vpb
\begin{align*}
\lim_{n\to\infty}\exp(0)^{\frac{1}{n}}=&\ \exp(0), \quad \lim_{n\to\infty}\Big(\exp(nX)^{-1}\Big)^{\frac{1}{n}}=\ \exp(-X),\\
    \exp(X)\cdot_{\frac{1}{\infty}}\exp(Y)=&\ \lim\limits_{n\to \infty}\bigg(\exp\Big(\frac{X}{n}\Big)\exp\Big(\frac{Y}{n}\Big)\bigg)^{n}=\lim\limits_{n\to \infty}\bigg(\sum_{r,s=0}^{\infty}
    \frac{\big(\frac{X}{n}\big)^{r}\big(\frac{Y}{n}\big)^{s}}{r!s!}\bigg)^{n}\\
=&\ \lim\limits_{n\to \infty}\bigg(1+\frac{X+Y}{n}+o\Big(\frac{1}{n}\Big)\bigg)^n=\exp(X+Y).
\end{align*}
Here the second to the last step employs Eq.~(\ref{eq:expe}).

Now let $X_m,Y_m\in \frakg$ and $m\in \PP,$ with $\lim\limits_{m\to\infty}X_m=X$ and $\lim\limits_{m\to\infty}Y_m=Y$. So we have $X_m=X+\varepsilon_{m}^X$ and $Y_m=Y+\varepsilon_{m}^Y$ with $\lim\limits_{m\to\infty}\varepsilon_{m}^Y=\lim\limits_{m\to\infty}\varepsilon_{m}^X=0$. Then
\vpb
\begin{align*}
    \lim_{n\to\infty}\bigg(\exp\Big(\frac{X_n}{n}\Big)\exp\Big(\frac{Y_n}{n}\Big)\bigg)^{n}=&\ \lim\limits_{n\to \infty}\bigg(1+\frac{X+Y}{n}
    +\frac{\varepsilon_{n}(X)+\varepsilon_{n}(Y)}{n}+o\Big(\frac{1}{n}\Big)\bigg)^n\\
    =&\ \lim\limits_{n\to \infty}\bigg(1+\frac{X+Y}{n}+o\Big(\frac{1}{n}\Big)\bigg)^n=\exp(X+Y),
\end{align*}
showing that Eq.~\eqref{eq:jx} is satisfied. Thus $G$ is a complete  $\lim\limits_{n\to \infty}(\pown^{-1},\pown)$-group.
\end{proof}

Now we give an example built from functions on a nilpotent Lie group. Let
\begin{equation}
\begin{aligned}
\frakg :=&\ \{ (a_{ij}) \in \mathbb{C}^{3 \times 3} \mid a_{ij} = 0\,\text{ for }\,i\geqslant j\},\\
\frakJ:=&\ \{v:\mathbb{R} \rightarrow \frakg \text{ smooth}\}.
\end{aligned}
\label{eq:grj}
\end{equation}
Notice that $\frakg$ is a nilpotent matrix Lie algebra. Define
\begin{align*}
\exp:\frakJ\to\exp(\frakJ), \quad u\mapsto \exp(u),
\end{align*}
where $\exp(u)$ is defined pointwise by $\exp(u)(x):= \exp(u(x))$ for $x\in \RR$.

\begin{lemma}
The group $\exp(\frakJ)$ is a complete $\lim\limits_{n\to \infty}(\pown^{-1},\pown)$-group and a $\mathbb{R}$-group, where the topology is the product topology on $\mathbb{R}\times G$.
\label{Lemma:thklyf}
\end{lemma}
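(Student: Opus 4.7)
The plan is to leverage the two-step nilpotency of $\frakg$ (strictly upper triangular $3\times 3$ matrices satisfy $X^{3}=0$, so all double commutators vanish) to reduce everything to pointwise computations in the matrix Lie group $\exp(\frakg)$, after which Corollary~\ref{coro:0mult} will supply most of what we need. First I would verify that $\exp(\frakJ)$ is actually a group under pointwise matrix multiplication: for $u,v\in\frakJ$ the Baker--Campbell--Hausdorff series truncates to $\exp(u(x))\exp(v(x))=\exp\!\bigl(u(x)+v(x)+\tfrac{1}{2}[u(x),v(x)]\bigr)$, and since the right-hand side depends smoothly on $x$ it lies in $\exp(\frakJ)$. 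Inverses are handled identically via $\exp(u)^{-1}=\exp(-u)$. The topology, read as pointwise/product convergence (so that a net $\exp(u_\alpha)$ converges to $\exp(u)$ iff $u_\alpha(x)\to u(x)$ for every $x$), makes multiplication and inversion continuous.

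Next I would check the $\mathbb{R}$-group structure. For each $\exp(u)\in\exp(\frakJ)$ define $K_{\exp(u)}:\mathbb{R}\to\exp(\frakJ)$ by $K_{\exp(u)}(r):=\exp(ru)$. Continuity is clear, $K_{\exp(u)}(0)=e$, $K_{\exp(u)}(1)=\exp(u)$, and because $[su,tu]=0$, the additivity $K_{\exp(u)}(s+t)=K_{\exp(u)}(s)K_{\exp(u)}(t)$ follows from the two-step BCH identity above. Uniqueness of the one-parameter subgroup passing through $\exp(u)$ is the standard argument: any such $K$ is determined on $\mathbb{Z}$ by iterated multiplication, then on $\mathbb{Q}$ by $n$-th roots (which are unique since $P_n$ is bijective via $\exp(v)\mapsto\exp(nv)$), then on all of $\mathbb{R}$ by continuity. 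Consequently $P_r(\exp(u))=\exp(ru)$ for every $r\in\mathbb{R}$, and in particular $\lim_{n\to\infty}\exp(u)^{1/n}=\lim_{n\to\infty}\exp(u/n)=e$, so $\exp(\frakJ)$ is both a $\mathbb{Q}$-group and an $\mathbb{R}$-group.

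For the ``complete $\lim_{n\to\infty}(P_n^{-1},P_n)$-group'' claim I would transport the argument of Corollary~\ref{coro:0mult} pointwise. For any $u,v\in\frakJ$, the two-step BCH formula gives
\begin{equation*}
\bigl(\exp(u/n)\exp(v/n)\bigr)^{n}=\exp\!\Bigl(u+v+\tfrac{1}{2n}[u,v]\Bigr),
\end{equation*}
which converges pointwise (and uniformly on compacta in $x$) to $\exp(u+v)$ as $n\to\infty$, establishing unique existence of $\lim_{n\to\infty}P_n(P_n^{-1}(a)P_n^{-1}(b))$ together with the ``multiplication'' of Eq.~\eqref{eq:xinfy}. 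Synchronization is the last piece: given $u_n\to u$ and $v_n\to v$ pointwise in $\frakJ$, the same identity yields $\exp(u_n+v_n+\tfrac{1}{2n}[u_n,v_n])\to\exp(u+v)$, verifying Eq.~\eqref{eq:jx}.

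The main obstacle, as I see it, is purely bookkeeping rather than conceptual: one must be careful that $\frakJ$ is not literally a finite-dimensional matrix Lie algebra (the hypotheses of Corollary~\ref{coro:0mult}) but only a smooth-function space valued in one, so rather than invoking the corollary as a black box I would rederive its key identities using that $\frakg$-valued pointwise BCH terminates after the bracket term. Once this truncation is explicit, all the limit interchanges needed in Definition~\ref{defn:lhnrb}\eqref{it:lhnrb1} become elementary in the product topology.
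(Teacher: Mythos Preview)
Your proposal is correct and follows essentially the same route as the paper: both establish the group structure via the truncated two-step BCH formula, define $P_r(\exp u)=\exp(ru)$, and then verify the synchronized-limit property by a pointwise computation (the paper by invoking Corollary~\ref{coro:0mult} at each $x\in\mathbb{R}$, you via the closed form $(\exp(u/n)\exp(v/n))^n=\exp\bigl(u+v+\tfrac{1}{2n}[u,v]\bigr)$). Your version is slightly more self-contained---it sidesteps the objection you raise about $\exp(\frakJ)$ not being a finite-dimensional matrix Lie group, and it spells out the $\mathbb{R}$-group uniqueness argument that the paper leaves implicit.
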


\begin{proof}
Since $\exp:\frakg \rightarrow \exp(\frakg)$ is bijective, the map $\exp$ has a left inverse, denoted by $\widetilde{\exp}$. For each $\exp(u)\in\exp(\frakJ)$, we have $u=\widetilde{\exp}\exp(u)$ and so $\exp:\frakJ\rightarrow \exp(\frakJ)$ is injective and then bijective.
For each $u,v\in \frakJ$ and $x\in\RR$, it follows from the nilpotency of $\frakg$ that
\begin{align*}
\exp(u(x))\exp(v(x))=&\ \exp\Big(u(x)+v(x)+\frac{1}{2}[u(x),v(x)]\Big),\\
\exp(0)\exp(u(x))=&\ \exp(u(x))\exp(x)=\exp(u(x)),\\
\exp(u(x))\exp(-u(x))=&\ \exp(0).
\end{align*}
Notice that $u+v+\frac{1}{2}[u,v],\,0,\,-u$ are in $\frakJ$, and  $\exp(\frakJ)$ is a group.
For each $r\in\mathbb{R}$, define the map
$$
P_{r}:\exp(\frakJ)\rightarrow \exp(\frakJ), \quad \exp(u) \mapsto \exp(ru),$$
where $\exp(ru)$ is defined pointwise by $\exp(ru)(x):= \exp(ru(x))$ for $x\in \RR$.

In terms of Corollary~\ref{coro:0mult}, for each sequences $u_n,v_n\in \frakJ, n\in \PP,$ with $\lim\limits_{n\to\infty}u_n=u,\lim\limits_{n\to\infty}v_n=v$ and $x\in\mathbb{R}$, we have
\vpb
\begin{align*}
\exp(u(x))\cdot_{\frac{1}{\infty}}\exp(v(x))=&\ \exp(u(x)+v(x)),\\
\exp(u(x)+v(x))=&\ \lim_{n\to\infty}\Big(\exp(\frac{1}{n}u_n(x))\exp(\frac{1}{n}v_n(x))\Big)^{n},
\end{align*}
showing that $\exp(\frakJ)$ is a complete $\lim\limits_{n\to\infty}(\pown^{-1},\pown)$-group.
\end{proof}

\begin{theorem}
\label{thm:rboze}
Let $G$ be a matrix Lie group with a matrix Lie algebra $\frakg$ such that
$\exp:\frakg \rightarrow G$ is bijective $($resp. Let $\frakJ$ be given in Eq.~\eqref{eq:grj}$)$.
Then a map  $\frakB:G \rightarrow G$ (resp. $\frakB^{\frakJ}: \exp(\frakJ) \rightarrow \exp(\frakJ)$) is a Rota-Baxter operator with $($resp. limit-$)$weight zero if and only if
\begin{equation*}
\frakB\big(\exp(u\big)\big)\frakB\big(\exp\big(v\big)\big)=
\frakB\Big(\exp\big(u+\frakB(\exp(u))v\frakB(\exp(u))^{-1}\big)\Big), \quad  u, v\in \frakg.
\label{eq:rboze}
\end{equation*}
\begin{equation*}
   \Big(\text{resp. } \frakB^{\frakJ}\big(\exp(u\big)\big)\frakB^{\frakJ}\big(\exp\big(v\big)\big)=
\frakB^{\frakJ}\Big(\exp\big(u+\frakB^{\frakJ}(\exp(u))v\frakB^{\frakJ}
(\exp(u))^{-1}\big)\Big), \quad u,v\in\frakJ \Big).
\end{equation*}
\end{theorem}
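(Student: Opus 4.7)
The plan is to show directly that, under the hypothesis that $\exp:\frakg\to G$ is bijective, the ``limit-weight zero'' identity in Eq.~\eqref{eq:rbo0} reduces to the finite identity stated in the theorem by evaluating the limit
\[
\lim_{n\to\infty}\Big(a^{\frac{1}{n}}\frakB(a)b^{\frac{1}{n}}\frakB(a)^{-1}\Big)^n
\]
explicitly when $a=\exp(u)$ and $b=\exp(v)$ for $u,v\in\frakg$. The two key tools are the matrix conjugation identity $A\exp(X)A^{-1}=\exp(AXA^{-1})$ (valid for any invertible $A$ and any $X\in\frakg$), and the exponential limit formula of Eq.~\eqref{eq:expe}.

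\textbf{Main steps.} First I would write $a^{\frac{1}{n}}=\exp(u/n)$ and $b^{\frac{1}{n}}=\exp(v/n)$, which is unambiguous since $\exp$ is bijective and agrees with the power map coming from the one-parameter subgroup $t\mapsto\exp(tu)$. Setting $A:=\frakB(\exp(u))$ and $w:=AvA^{-1}\in\frakg$, the conjugation identity gives
\[
A\exp(v/n)A^{-1}=\exp\!\big(AvA^{-1}/n\big)=\exp(w/n).
\]
Hence
\[
a^{\frac{1}{n}}\frakB(a)b^{\frac{1}{n}}\frakB(a)^{-1}=\exp(u/n)\exp(w/n)A\cdot A^{-1}=\exp(u/n)\exp(w/n).
\]
Next, expanding each exponential via Eq.~\eqref{eq:expA} and multiplying gives
\[
\exp(u/n)\exp(w/n)=I+\frac{u+w}{n}+o\!\left(\tfrac{1}{n}\right),
\]
so by Eq.~\eqref{eq:expe} applied with $X=u+w$,
\[
\lim_{n\to\infty}\Big(\exp(u/n)\exp(w/n)\Big)^n=\exp(u+w)=\exp\!\big(u+\frakB(\exp(u))\,v\,\frakB(\exp(u))^{-1}\big).
\]
In particular the limit in Eq.~\eqref{eq:rbo0} automatically exists and is unique for matrix Lie groups with $\exp$ bijective, so no separate verification is needed. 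Substituting this value into Eq.~\eqref{eq:rbo0} yields the stated finite identity, and conversely the same computation run backwards shows that the finite identity implies Eq.~\eqref{eq:rbo0}. Since $\exp:\frakg\to G$ is bijective, the quantifier ``for all $u,v\in\frakg$'' is equivalent to ``for all $a,b\in G$,'' which closes both directions.

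\textbf{The $\frakJ$ case.} For the parallel statement about $\frakB^{\frakJ}:\exp(\frakJ)\to\exp(\frakJ)$, I would run the same argument pointwise on $\RR$: for fixed $x\in\RR$, the values $u(x),v(x)$ lie in the nilpotent matrix Lie algebra $\frakg$ of Eq.~\eqref{eq:grj}, so Lemma~\ref{Lemma:thklyf} guarantees that $\exp(\frakJ)$ is a \complim $\lim_{n\to\infty}(\pown^{-1},\pown)$-group. The conjugation and exponential-limit identities used above hold verbatim pointwise, and nilpotency ensures $u+\frakB^{\frakJ}(\exp(u))\,v\,\frakB^{\frakJ}(\exp(u))^{-1}$ remains in $\frakJ$, so the same equivalence follows.

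\textbf{Main obstacle.} The only nontrivial point is justifying that the limit really equals $\exp(u+w)$ cleanly, i.e. controlling the $o(1/n)$ remainder so that Eq.~\eqref{eq:expe} applies. This is a routine estimate in the matrix case (the remainder is $O(1/n^2)$ uniformly in $n$ on bounded subsets of $\frakg$), but some care is needed to package it as a genuine application of Eq.~\eqref{eq:expe} with a single limit. For the $\frakJ$ version, the subtlety is checking that the \complim condition and the closedness of $\frakJ$ under the bracket and sum operations ensure the pointwise limit indeed lies in $\exp(\frakJ)$, which follows from Lemma~\ref{Lemma:thklyf} and the smoothness assumption defining $\frakJ$.
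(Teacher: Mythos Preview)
Your proposal is correct and follows the same overall strategy as the paper: compute the limit in Eq.~\eqref{eq:rbo0} explicitly and show it equals $\exp\big(u+\frakB(\exp(u))v\frakB(\exp(u))^{-1}\big)$, whence the equivalence follows in both directions since $\exp$ is bijective.

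The one notable difference is how the limit is evaluated. The paper introduces the exponential adjoint $\widetilde\frakB$ with $\frakB(\exp(u))=\exp(\widetilde\frakB(u))$, expands all four factors $\exp(u/n)\exp(\widetilde\frakB(u))\exp(v/n)\exp(-\widetilde\frakB(u))$ as matrix power series, collects terms, and then applies Eq.~\eqref{eq:expe}. You instead first use the conjugation identity $A\exp(v/n)A^{-1}=\exp(AvA^{-1}/n)$ with $A=\frakB(\exp(u))$ to collapse the four-factor product to the two-factor product $\exp(u/n)\exp(w/n)$, and only then apply Eq.~\eqref{eq:expe}. Your route is shorter and avoids the auxiliary map $\widetilde\frakB$ entirely; the paper's route is more explicit about the series manipulations. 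Both are valid, and your observation that $w=AvA^{-1}=\mathrm{Ad}_A(v)\in\frakg$ (since $A\in G$) is what makes $u+w\in\frakg$ and hence $\exp(u+w)\in G$, which the paper gets implicitly from its series computation. One small addition you could make for completeness: cite Corollary~\ref{coro:0mult} to confirm that $G$ is a $\lim_{n\to\infty}(\pown^{-1},\pown)$-group, so that Definition~\ref{defn:rbg0} applies in the first place.
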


Note that the map $\frakB\circ \exp: \frakg \to G$ is the Rota-Baxter operator with weight zero in the sense of~\cite{LST2} (see Definition~3.3). See~\cite{GGH2} for relative Rota-Baxter operators on groups with limit-weights and their relation with pre-groups. 

\begin{proof}
We only consider the case of $\frakB:G \rightarrow G$.
Let $\frakB:G\rightarrow G$ be a map and $\widetilde\frakB$ be the \mcorr of $\frakB$ on $\frakg$. We have
{\small
\begin{align*}
&\ \lim_{n \to \infty}\bigg(\exp\Big(\frac{u}{n}\Big)\exp(\widetilde \frakB(u))\exp\Big(\frac{v}{n}\Big)\exp\Big(-\widetilde \frakB(u)\Big)\bigg)^{n}\\
=&\ \lim_{n \to \infty}\Bigg(\bigg(\sum_{r_1,s_1=0}^{\infty}\frac{(\frac{u}{n})^{r_1}\widetilde \frakB(u)^{s_1}}{r_1!s_1!}\bigg)
    \bigg(\sum_{r_2,s_2=0}^{\infty}\frac{(\frac{v}{n})^{r_2}(-\widetilde\frakB(u))^{s_2}}{r_2!s_2!}\bigg)\Bigg)^n\\
=&\ \lim_{n \to \infty}\Bigg(\bigg(I + \widetilde \frakB(u) + \frac{\widetilde \frakB(u)^2}{2!} +\cdots+\frac{u}{n}\Big(I + \widetilde \frakB(u) + \frac{\widetilde \frakB(u)^2 }{2!}+\cdots\Big)+o\Big(\frac{1}{n}\Big)\bigg)\\
&\hspace{1cm} \bigg(I - \widetilde \frakB(u) + \frac{\widetilde \frakB(u)^2 }{2!}+\cdots+\frac{v}{n}\Big(I - \widetilde \frakB(u) + \frac{\widetilde \frakB(u)^2}{2!} +\cdots\Big)+o\Big(\frac{1}{n}\Big)\bigg)\Bigg)^n\\
=   &\ \lim_{n \to \infty}\Bigg(\bigg(\exp(\widetilde \frakB(u))+\frac{u}{n}\exp(\widetilde \frakB(u))+o(\frac{1}{n})\bigg)\bigg(\exp(-\widetilde \frakB(u))+\frac{v}{n}\exp(-\widetilde \frakB(u))+o\Big(\frac{1}{n}\Big)\bigg)\Bigg)^n\\
=&\ \lim_{n \to \infty}\bigg(I+\frac{1}{n}\Big(u+\exp(\widetilde \frakB(u))v\exp(-\widetilde \frakB(u))\Big)+o\Big(\frac{1}{n}\Big)\bigg)^n\\
=&\ \exp\bigg(u+\exp(\widetilde \frakB(u))v\exp\Big(-\widetilde \frakB(u)\Big)\bigg)\quad \text{(by Eq.~(\ref{eq:expe}))}\\
=&\ \exp\bigg(u+\frakB\Big(\exp(u)\Big)v\frakB\Big(\exp(u)\Big)^{-1}\bigg).
\end{align*}
}
Now if $\frakB$ is a Rota-Baxter operator with limit-weight zreo, then by Eq.~(\ref{eq:rbo0}) we obtain
{\small
\begin{align*}
\frakB\big(\exp(u\big)\big)\frakB\big(\exp\big(v\big)\big)&=\frakB\Bigg( \lim_{n \to \infty}\bigg(\exp\Big(\frac{u}{n}\Big) \frakB(\exp(u))\exp\Big(\frac{v}{n}\Big)\frakB(\exp(u))^{-1}\bigg)^{n} \Bigg)\\
&=\frakB\Bigg( \lim_{n \to \infty}\bigg(\exp\Big(\frac{u}{n}\Big)\exp(\widetilde \frakB(u))\exp\Big(\frac{v}{n}\Big)\exp(-\widetilde \frakB(u))\bigg)^{n} \Bigg)\\
&=\frakB\bigg(\exp\Big(u+\frakB(\exp(u))v\frakB(\exp(u))^{-1}\Big)\bigg).
\end{align*}
}
Conversely, if
$$\frakB\big(\exp(u\big)\big)\frakB\big(\exp\big(v\big)\big)
=\frakB\bigg(\exp\Big(u+\frakB(\exp(u))v\frakB(\exp(u))^{-1}\Big)\bigg),$$
then it follows from Eq.~(\ref{eq:rbo0}) that
{\small
\begin{align*}
    \frakB\big(\exp(u\big)\big)\frakB\big(\exp\big(v\big)\big)
&=\frakB\bigg(\exp\Big(u+\frakB(\exp(u))v\frakB(\exp(u))^{-1}\Big)\bigg)\\
&=\frakB\Bigg( \lim_{n \to \infty}\bigg(\exp\Big(\frac{u}{n}\Big)\exp(\widetilde \frakB(u))\exp\Big(\frac{v}{n}\Big)\exp(-\widetilde \frakB(u))\bigg)^{n} \Bigg)\\
&=\frakB\Bigg( \lim_{n \to \infty}\bigg(\exp\Big(\frac{u}{n}\Big) \frakB(\exp(u))\exp\Big(\frac{v}{n}\Big)\frakB(\exp(u))^{-1}\bigg)^{n} \Bigg).
\qedhere
\end{align*}
}
\end{proof}

\begin{remark}
In terms of the map $\frakB\Big(\exp(\cdot)\Big):\frakg \rightarrow G$, we obtain the Rota-Baxter operator with weight $0$ on Lie group $G$ given in~\cite{LST2}.
\end{remark}

Let us expose an example of Rota-Baxter group with limit-weight $0$, which needs the following fact.

\begin{lemma}\cite[Proposition 3.35]{H1}.
Let $X,Y\in \mathbb{C}^{n\times n}$. Then
\vpa
\begin{equation}
\label{eq:BHF}        
\exp(X)Y\exp(-X)=Y+\sum\limits_{k=1}^{\infty}\frac{1}{k!} \ada_X^{k}Y, \quad \text{where\ \   } \ada_X Y:= [X, Y]. 
\vpa
\end{equation}
\end{lemma}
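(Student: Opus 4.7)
The plan is to verify the identity by differentiating the one-parameter family $f(t) := \exp(tX)\, Y \exp(-tX)$ and matching its Taylor series at $t=0$ with the stated power series at $t=1$.

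First I would note that $f:\mathbb{R}\to \mathbb{C}^{n\times n}$ is real-analytic, since each matrix entry is a product of entire power series in $t$, and hence $f$ is infinitely differentiable with a Taylor series converging on all of $\mathbb{R}$. Using the standard identity $\frac{d}{dt}\exp(tX)=X\exp(tX)=\exp(tX)X$, the product rule yields
\begin{equation*}
f'(t)=X\exp(tX)\,Y\exp(-tX)-\exp(tX)\,Y\exp(-tX)\,X = X f(t)-f(t)X = \ada_X f(t).
\end{equation*}

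Next I would iterate: by induction on $k$, we obtain $f^{(k)}(t)=\ada_X^{\,k}\, f(t)$, so that $f^{(k)}(0)=\ada_X^{\,k} Y$ (using $f(0)=Y$). Taylor's theorem for real-analytic matrix-valued functions then gives
\begin{equation*}
f(t)=\sum_{k=0}^{\infty}\frac{t^k}{k!}\,\ada_X^{\,k}\,Y \qquad \text{for all }t\in\mathbb{R}.
\end{equation*}
Specializing to $t=1$ recovers the claimed formula, after splitting off the $k=0$ term which equals $Y$.

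The only nontrivial points are (i) justifying that the power series for $\exp(tX)$ may be differentiated term by term, which follows from uniform convergence on compacts of $\mathbb{R}$, and (ii) convergence of the final series, which is immediate from the bound $\|\ada_X^{\,k}Y\|\leq (2\|X\|)^k\|Y\|$ (or, more directly, from the real-analyticity of $f$ established above). Neither step is a real obstacle; the argument is a clean instance of the general principle that conjugation by $\exp(X)$ on matrices equals $\exp(\ada_X)$ acting on the Lie algebra $\mathfrak{gl}_n(\mathbb{C})$.
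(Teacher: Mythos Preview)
Your argument is correct and is precisely the standard proof of this identity. The paper does not give its own proof of this lemma; it simply cites \cite[Proposition~3.35]{H1}, and the argument you wrote is essentially the one found there.
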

Now we calculate the integral of functions taking values in a nilpotent group.
\begin{lemma}
Let $\frakJ$ be the group given in Eq.~\eqref{eq:grj}. Then for each $u\in\frakJ$, we have
\begin{align*}
     \int_{0}^{x}u(t)d_Gt=\exp\Big(\int_{0}^xu(t)dt+\frac{1}{2}\int_{0}^x[u(t),\int_{0}^tu(s)ds]dt\Big)
\end{align*}
and $\int_{0}^{x}(\cdot)  d_Gt$ is a map from $\frakJ$ to $\exp(\frakJ)$.
\label{lemma:intbm}
\end{lemma}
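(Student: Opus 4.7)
The plan is to compute $\int_{0}^{x}u(t)\,d_G t$ directly from Definition~\ref{defn:IntR}, exploiting the two-step nilpotency of $\frakg$ to make the Baker--Campbell--Hausdorff formula terminate after one commutator.

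First, since the $r$-th power map on $\exp(\frakJ)$ acts pointwise by $\exp(v)^{r}=\exp(rv)$, Definition~\ref{defn:IntR} reads, for a partition $P=\{0=t_{0}<\cdots<t_{n}=x\}$ with tags $\xi_{k}\in[t_{k-1},t_{k}]$ and $\Delta_{k}=t_{k}-t_{k-1}$,
\[
\int_{0}^{x}u(t)\,d_G t
=\lim_{|P|\to 0}\exp\bigl(\Delta_{n}u(\xi_{n})\bigr)\exp\bigl(\Delta_{n-1}u(\xi_{n-1})\bigr)\cdots\exp\bigl(\Delta_{1}u(\xi_{1})\bigr).
\]
Because $\frakg$ consists of strictly upper triangular $3\times 3$ matrices, $[\frakg,[\frakg,\frakg]]=0$, so for $X,Y\in\frakg$ the BCH formula reduces to $\exp(X)\exp(Y)=\exp(X+Y+\tfrac{1}{2}[X,Y])$. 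An induction on $n$ (with triple commutators killed at each step) then yields
\[
\exp(X_{n})\exp(X_{n-1})\cdots\exp(X_{1})=\exp\Bigl(\sum_{k=1}^{n}X_{k}+\tfrac{1}{2}\sum_{1\le j<i\le n}[X_{i},X_{j}]\Bigr).
\]

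Substituting $X_{k}=\Delta_{k}u(\xi_{k})$ and letting $|P|\to 0$, the linear part $\sum_{k}\Delta_{k}u(\xi_{k})$ is a standard Riemann sum converging to $\int_{0}^{x}u(t)\,dt$. For the commutator sum one rewrites
\[
\sum_{1\le j<i\le n}\Delta_{i}\Delta_{j}[u(\xi_{i}),u(\xi_{j})]=\sum_{i=1}^{n}\Delta_{i}\Bigl[u(\xi_{i}),\,\sum_{j<i}\Delta_{j}u(\xi_{j})\Bigr],
\]
where the inner sum approximates $\int_{0}^{\xi_{i}}u(s)\,ds$ up to an error of order $|P|$; after weighting by $\Delta_{i}$ and summing in $i$, the total error is $O(|P|)$ while the main term tends to $\int_{0}^{x}[u(t),\int_{0}^{t}u(s)\,ds]\,dt$. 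Continuity of $\exp$ then yields the claimed formula.

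The main technical obstacle is exactly this last uniform control: one must bound $\bigl|\sum_{j<i}\Delta_{j}u(\xi_{j})-\int_{0}^{\xi_{i}}u(s)\,ds\bigr|$ uniformly in $i$ and in the choice of tags, which will follow from the smoothness (and hence uniform boundedness on $[0,x]$) of $u$ together with continuity of the bracket. The range statement is then immediate: the exponent $\int_{0}^{x}u(t)\,dt+\tfrac{1}{2}\int_{0}^{x}[u(t),\int_{0}^{t}u(s)\,ds]\,dt$ is a smooth $\frakg$-valued function of $x$, hence an element of $\frakJ$, so the full integral defines a map $\frakJ\to\exp(\frakJ)$.
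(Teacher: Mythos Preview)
Your proposal is correct and follows essentially the same route as the paper: apply the two-step BCH formula to the product of exponentials, then identify the linear and commutator parts as Riemann sums for $\int_{0}^{x}u(t)\,dt$ and $\int_{0}^{x}[u(t),\int_{0}^{t}u(s)\,ds]\,dt$ respectively. In fact you are slightly more careful than the paper, which only computes the limit along the uniform partition $t_{k}=\tfrac{k}{n}x$ with right-endpoint tags and does not explicitly address the uniform error bound you flag for the inner sum; your treatment with arbitrary partitions and tags is what Definition~\ref{defn:IntR} actually demands.
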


\begin{proof}
We have
{\small
\begin{align*}
 \int_{0}^{x}u(t)d_Gt=&\ \lim_{n\to\infty}\prod_{k=1}^{n}\exp\bigg(\frac{x}{n}u\Big(\frac{n+1-k}{n}x\Big)\bigg)\\
=&\ \lim_{n\to \infty}\exp\bigg(\sum_{k=1}^{n}\frac{x}{n}u\Big(\frac{n+1-k}{n}x\Big)+\sum_{1\leqslant i< j\leqslant 1}\frac{1}{2}\Big(\frac{x}{n}\Big)^2\Big[u\Big(\frac{n+1-i}{n}x\Big),u\Big(\frac{n+1-j}{n}x\Big)\Big]\bigg)\\
&\hspace{5cm}\text{(by the nilpotency of $\frakg$)}\\
        =&\ \lim_{n\to \infty}\exp\bigg(\sum_{k=1}^{n}\frac{x}{n}u\Big(\frac{n+1-k}{n}x\Big)+\sum_{1\leqslant j< i\leqslant n}\frac{1}{2}\Big(\frac{x}{n}\Big)^2\Big[u\Big(\frac{i}{n}x\Big),u\Big(\frac{j}{n}x\Big)\Big]\bigg)\\
        =&\ \lim_{n\to \infty}\exp\bigg(\sum_{k=1}^{n}\frac{x}{n}u(\frac{n+1-k}{n}x)+\sum_{i=1}^n\sum_{j=1}^i\frac{1}{2}(\frac{x}{n})^2\Big[u\Big(\frac{i}{n}x\Big),u\Big(\frac{j}{n}x\Big)\Big]\bigg).
    \end{align*}
}
Notice that $$\lim\limits_{n\to\infty}\sum_{k=1}^{n}\frac{x}{n}u\Big(\frac{n+1-k}{n}x\Big)=\int_{0}^xu(t)dt$$ and
\begin{align*}
\lim\limits_{n\to\infty}\sum\limits_{i=1}^n
\sum\limits_{j=1}^i\Big(\frac{x}{n}\Big)^2\Big[u\Big(\frac{i}{n}x\Big),u\Big(\frac{j}{n}x\Big)\Big]
=\ \lim\limits_{n\to\infty}\sum\limits_{i=1}^n
\sum\limits_{j=1}^i\Big[u\Big(\frac{i}{n}x\Big),\frac{x}{n}u\Big(\frac{j}{n}x\Big)\Big]\frac{x}{n}
        =\ \int_{0}^x[u(t),\int_{0}^tu(s)ds]dt.
    \end{align*}
So we obtain
\begin{align*}
\int_{0}^{x}u(t)d_Gt =&\ \lim_{n\to \infty}\exp\bigg(\sum_{k=1}^{n}\frac{x}{n}u\Big(\frac{n+1-k}{n}x\Big)+\sum_{i=1}^n\sum_{j=1}^i\frac{1}{2}\Big(\frac{x}{n}\Big)^2\Big[u\Big(\frac{i}{n}x\Big),u\Big(\frac{j}{n}x\Big)\Big]\bigg)\\
=&\ \exp\Big(\int_{0}^xu(t)dt+\frac{1}{2}\int_{0}^x[u(t),\int_{0}^tu(s)ds]dt\Big).
\end{align*}
Since
$$\int_{0}^xu(t)dt+\frac{1}{2}\int_{0}^x[u(t),\int_{0}^tu(s)ds]dt$$
is in $\frakJ,$ we conclude that $\int_{0}^{x}(\cdot) d_Gt$ is a map with values in $\exp(\frakJ)$.
\end{proof}

Now we give an example of a Rota-Baxter group with limit-weight zero.

\begin{prop}
Let $\frakg=\{ (a_{ij}) \in \mathbb{C}^{3 \times 3} \mid a_{ij} = 0\,\text{ for }\,i\geqslant j\}$ be a nilpotent matrix Lie algebra with a Rota-Baxter operator $B$ with weight zero on $\frakg$ satisfying $\frakB([\frakg,\frakg])\subseteq [\frakg,\frakg]$ and $G:=\exp(\frakg)$ be the simply connected nilpotent analytic Lie group whose tangent space is $\frakg$.
Define a map
\begin{equation*}
\frakB:G\rightarrow G, \quad \exp(u)\mapsto \exp\bigg(B(u)+\frac{1}{2}B\Big([u,B(u)]\Big)\bigg).
\label{eq:exafb}
\end{equation*}
\begin{equation*}
\text{\bigg(resp.\ }\int_{0}^{x}(\cdot) d_Gt:\exp(\frakJ)\rightarrow \exp(\frakJ), \quad \exp(u(x))\mapsto \exp\Big(\int_{0}^xu(t)dt+\frac{1}{2}\int_{0}^x[u(t),\int_{0}^tu(s)ds]dt\Big)\text{\bigg ).}
\end{equation*}
Then $(G,\frakB)$ $($resp. $(\frakJ,\int_{0}^{x}(\cdot) d_Gt)$$)$ is a Rota-Baxter group with limit-weight zero.
\label{prop:rb33}
\end{prop}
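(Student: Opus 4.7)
The plan is to apply Theorem~\ref{thm:rboze} to reduce the limit-weight-zero Rota-Baxter identity on $G$ to the finite identity
\[
\frakB(\exp u)\,\frakB(\exp v) \;=\; \frakB\bigl(\exp(u+\frakB(\exp u)\,v\,\frakB(\exp u)^{-1})\bigr),\quad u,v\in\frakg,
\]
and then verify this identity by direct expansion on the Lie algebra side. The key simplification is that $\frakg$, the strictly upper-triangular $3\times 3$ matrices (the Heisenberg algebra), is $2$-step nilpotent: $[\frakg,[\frakg,\frakg]]=0$. This truncates the Baker--Campbell--Hausdorff formula to $\exp(X)\exp(Y)=\exp\bigl(X+Y+\tfrac12[X,Y]\bigr)$ and the adjoint series Eq.~\eqref{eq:BHF} to $\exp(X)Y\exp(-X)=Y+[X,Y]$. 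Combined with the hypothesis $B([\frakg,\frakg])\subseteq[\frakg,\frakg]$, any bracket having one factor in $[\frakg,\frakg]$ vanishes identically.

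Writing $Y_u:=B(u)+\tfrac12 B([u,B(u)])$ so that $\frakB(\exp u)=\exp Y_u$, the left-hand side equals $\exp\bigl(Y_u+Y_v+\tfrac12[Y_u,Y_v]\bigr)$. Both correction terms lie in $[\frakg,\frakg]$, so all four cross brackets in $[Y_u,Y_v]$ vanish, giving $[Y_u,Y_v]=[B(u),B(v)]$. Hence
\[
\mathrm{LHS}=\exp\Bigl(B(u)+B(v)+\tfrac12 B([u,B(u)])+\tfrac12 B([v,B(v)])+\tfrac12[B(u),B(v)]\Bigr).
\]
For the right-hand side, the truncated adjoint action yields $\frakB(\exp u)\,v\,\frakB(\exp u)^{-1}=v+[B(u),v]$, since $[B([u,B(u)]),v]=0$. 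Setting $w:=u+v+[B(u),v]$, the preservation property forces $[w,B(w)]=[u+v,B(u)+B(v)]$, whose four terms, after applying $B$ and using $B([v,B(u)])=-B([B(u),v])$ together with the weight-zero Rota-Baxter identity $B([B(u),v])+B([u,B(v)])=[B(u),B(v)]$, combine to
\[
B([w,B(w)])=B([u,B(u)])+B([v,B(v)])+[B(u),B(v)]-2B([B(u),v]).
\]
Adding $B(w)=B(u)+B(v)+B([B(u),v])$ then exactly cancels the $B([B(u),v])$ terms and reproduces the LHS exponent, verifying the identity.

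The main obstacle is purely bookkeeping: one must check that the coefficient $\tfrac12$ in the correction term $\tfrac12 B([u,B(u)])$ is precisely what absorbs the extra $B([B(u),v])$ produced by the adjoint action, and that the assumption $B([\frakg,\frakg])\subseteq[\frakg,\frakg]$, which is invoked at least twice, prevents any further cross terms from surviving. For the parenthetical statement about $\int_0^x(\cdot)\,d_G t$, Lemma~\ref{lemma:intbm} shows that this operator has exactly the shape $\exp(u)\mapsto\exp\bigl(I(u)+\tfrac12 I([u,I(u)])\bigr)$, where $I(u)(x):=\int_0^x u(t)\,dt$ acts pointwise as a weight-zero Rota-Baxter operator on the Lie algebra $\frakJ$ (by the classical integration-by-parts formula) and clearly sends $[\frakJ,\frakJ]$ into itself. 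Hence the very same computation, this time via the ``resp.'' half of Theorem~\ref{thm:rboze}, delivers the claim.
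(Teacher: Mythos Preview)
Your proof is correct and follows essentially the same route as the paper: invoke Theorem~\ref{thm:rboze} to reduce to the finite identity, then verify it by expanding both sides with the truncated BCH formula, the $2$-step nilpotency, the hypothesis $B([\frakg,\frakg])\subseteq[\frakg,\frakg]$, and the weight-zero Rota--Baxter relation. Your bookkeeping is in fact a bit cleaner than the paper's (you isolate $Y_u$ and explicitly track why each cross bracket dies), and your handling of the parenthetical integral case via Lemma~\ref{lemma:intbm} and the observation that $I(u)(x)=\int_0^x u(t)\,dt$ is itself a weight-zero Rota--Baxter operator preserving $[\frakJ,\frakJ]$ is exactly right.
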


\begin{proof}
We restrict ourselves to the case $\frakB:G\rightarrow G$.
Owing to Corollary~\ref{coro:0mult},
$G$ is a complete $(\pown^{-1},\pown)$-group. By the nilpotency of $\frakg$ and Baker-Campbell-Hausdorff formula, on the one hand,
\begin{align*}
\frakB(\exp(u))\frakB(\exp(v))=&\ \exp\Big(B(u)+\frac{1}{2}B[u,B(u)]\Big)\exp\Big(B(v)+\frac{1}{2}B[v,B(v)]\Big)\\
        =&\ \exp\Big(B(u)+B(v)+\frac{1}{2}B[u,B(u)]+\frac{1}{2}B[v,B(v)]+\frac{1}{2}[B(u),B(v)]\Big)\\
        =&\ \exp\Big(B(u)+B(v)+\frac{1}{2}B[u,B(u)]+\frac{1}{2}B[v,B(v)]+\frac{1}{2}[B(u),v]+\frac{1}{2}[u,B(v)]\Big).
    \end{align*}
On the other hand,
{\small
\begin{align*}
&\frakB(\exp(u+\frakB(\exp(u))v\frakB(\exp(u))^{-1}))\\
=&\ \frakB\Bigg(\exp\bigg(u+\exp\Big(B(u)+\frac{1}{2}B[u,B(u)]\Big)v\exp
\Big(-B(u)-\frac{1}{2}B([u,B(u)])\Big)\bigg)\Bigg)\\
=&\ \frakB\Bigg(\exp\bigg(u+v+\sum_{k=1}^{\infty}
 \frac{1}{k!}{\rm ad}_{B(u)+\frac{1}{2}B([u,B(u)])}^kv\bigg)\Bigg)
\quad\text{(by Eq.~(\ref{eq:BHF})) }\\
=&\ \frakB\Bigg(\exp\bigg(u+v+[B(u),v]\bigg)\Bigg)\quad \text{(by the nilpotency of $\frakg$)}
\\
=&\ \exp\Bigg(B(u)+B(v)+B([B(u),v])+\frac{1}{2}B\bigg(\Big[u+v+[B(u),v],B(u+v+[B(u),v]\Big]\bigg)\Bigg)\\
=&\ \exp\Bigg(B(u)+B(v)+B([B(u),v])+\frac{1}{2}B\bigg(\Big[u+v+[B(u),v],B(u)+B(v)+B([B(u),v]\Big]\bigg)\Bigg)\\
=&\ \exp\Bigg(B(u)+B(v)+B([B(u),v])+\frac{1}{2}B\bigg([u,B(u)]+[u,B(v)]+[v,B(u)]+[v,B(v)]\bigg)\Bigg)\\
=&\ \exp\Big(B(u)+B(v)+\frac{1}{2}B[u,B(u)]+\frac{1}{2}B[v,B(v)]+\frac{1}{2}[B(u),v]+\frac{1}{2}[u,B(v)]\Big).
\end{align*}
}
Thus the proof is completed by Theorem~\ref{thm:rboze}.
\end{proof}

\subsection{Differential $\QQ$-groups with weight zero}
In this subsection, we consider a special case of differential groups
with limit-weight zero, whose tangent space is a differential Lie algebra with weight zero.

\begin{defn}
Let $G$ be a $\mathbb{Q}$-group which is also a $\lim\limits_{n\to\infty}(P_n^{-1},P_n)$-group. A map $\mathfrak{D}:G\rightarrow G$ is called a {\bf differential operator with limit-weight zero} on $G$ if, for $a,b\in G$, the limit $\lim\limits_{n\to \infty} \Big(\frakD(a)^{\frac{1}{n}}a \frakD(b)^{\frac{1}{n}} a^{-1} \Big)^n$ uniquely exists and
\vpb
\begin{equation}	
\label{eq:diffg0}
\frakD\left(ab\right)=
\lim_{n\to \infty} \Big(\frakD(a)^{\frac{1}{n}}a \frakD(b)^{\frac{1}{n}} a^{-1} \Big)^n.
\vpa
\end{equation}
In this case, we call $(G,\frakD)$ a {\bf differential $\mathbb{Q}$-group with limit-weight zero}. Further, if $G$ is a Lie group and $\frakD$ is a smooth map, then we call  $(G,\frakD)$ a {\bf differential Lie $\mathbb{Q}$-group with limit-weight zero}.
\label{defn:diffg0}
\end{defn}

The following result captures the relation between differential Lie $\mathbb{Q}$-group with limit-weight zero and differential Lie algebra with weight zero.

\begin{theorem}
Let $(G, \frakD)$ be a differential Lie $\QQ$-group with limit-weight zero.
Let $\frakg = T_e G$ be the Lie algebra of $G$ and $D:\frakg \rightarrow \frakg$
the tangent map of $\frakB$ at the identity $e$.
Then $\left(\mathfrak{g},D\right)$ is a differential Lie algebra with limit-weight $(\lim\limits_{n\to\infty}\frac{1}{n}\id_\frakg, n\,\id_\frakg)$, and so a differential Lie algebra with weight zero.
\label{thm:dgpl0}
\end{theorem}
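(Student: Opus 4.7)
The plan is to reduce the claim to the pair-weighted differential tangent theorem (Theorem~\ref{thm:diffgLie1}) by specializing $\mfrakL_{\frac{1}{n}} := P_n^{-1}$ and $\mfrakH_{\frac{1}{n}} := P_n$, in direct analogy with the Rota-Baxter argument of Theorem~\ref{thm:rbg2rbl0}.

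First I would observe that a differential Lie $\mathbb{Q}$-group with limit-weight zero in the sense of Definition~\ref{defn:diffg0} is a special case of a differential Lie group with limit-weight $\lim\limits_{n\to\infty}(P_n^{-1},P_n)$ in the sense of Definition~\ref{defn:diffgplh1}: indeed, taking $\mfrakL_{\frac{1}{n}}:=P_n^{-1}$ and $\mfrakH_{\frac{1}{n}}:=P_n$, the identity \eqref{eq:diffgplh1} becomes exactly \eqref{eq:diffg0}, and each pair $(P_n^{-1},P_n)$ is unital because $e^{1/n}=e^n=e$. Thus the hypothesis of Theorem~\ref{thm:diffgLie1} is met.

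Next I would compute the tangent maps. Since for $u\in\frakg$ and $t\in\mathbb{R}$ the curve $t\mapsto\exp(tu)$ is a one-parameter subgroup, one has $P_n(\exp(tu))=\exp(tu)^n=\exp(ntu)$, so
\[
(P_n)_{*e}(u)=\left.\frac{\dd}{\dd t}\right|_{t=0}\exp(ntu)=nu,\qquad (P_n^{-1})_{*e}(u)=\frac{u}{n}.
\]
Hence Theorem~\ref{thm:diffgLie1} gives that $(\frakg,D)$ is a differential Lie algebra with limit-weight $\lim\limits_{n\to\infty}(\tfrac{1}{n}\id_\frakg,\,n\,\id_\frakg)$.

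Finally I would unfold the defining identity from Definition~\ref{defn:difdb} with these tangent maps to obtain the desired weight-zero derivation law. Substituting gives
\begin{align*}
D([u,v])&=\lim_{n\to\infty} n\Bigl(\bigl[\tfrac{1}{n}D(u),v\bigr]+\bigl[u,\tfrac{1}{n}D(v)\bigr]+\bigl[\tfrac{1}{n}D(u),\tfrac{1}{n}D(v)\bigr]\Bigr)\\
&=\lim_{n\to\infty}\Bigl([D(u),v]+[u,D(v)]+\tfrac{1}{n}[D(u),D(v)]\Bigr)=[D(u),v]+[u,D(v)],
\end{align*}
which is the usual Leibniz rule and in particular matches the description in Remark~\ref{rk:weight 0} of the weight-zero case (with $\lim\limits_{n\to\infty}\tfrac{1}{n}\id_\frakg(u)=0$). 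No real obstacle arises here beyond carefully invoking the unital property of $(P_n^{-1},P_n)$ so that Theorem~\ref{thm:diffgLie1} applies; the only subtlety is the computation of the tangent maps of $P_n$ and $P_n^{-1}$, and that is immediate from the one-parameter-subgroup identity $\exp(tu)^n=\exp(ntu)$.
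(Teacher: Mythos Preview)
Your proof is correct and follows essentially the same line as the paper. The only difference is presentational: the paper writes out the direct second-derivative computation $D[u,v]=\left.\dfrac{\dd^2}{\dd t\dd s}\right|_{t,s=0}\frakD(e^{tu}e^{sv}e^{-tu})$ and passes the limit by hand, while you factor this through Theorem~\ref{thm:diffgLie1} and then specialize the tangent maps $(P_n)_{*e}=n\,\id_\frakg$, $(P_n^{-1})_{*e}=\tfrac{1}{n}\id_\frakg$---exactly as the paper does on the Rota--Baxter side in Theorem~\ref{thm:rbg2rbl0}. One small wording slip: you call Theorem~\ref{thm:diffgLie1} the ``pair-weighted differential tangent theorem,'' but it is the \emph{limit}-weighted one; the reference itself is correct.
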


\begin{proof}
It suffices to prove 
\vpb
$$D\left[u,v\right] = \left[D\left(u\right),v\right] +\left[u,D\left(v\right)\right],\quad  u,v\in \frakg,$$
which follows from
\vpb
\begin{align*}
D\left[u,v\right]  =&\ \left.\dfrac{\dd^2}{\dd t\dd s}\right|_{t,s=0}\frakD\left(e^{tu}e^{sv}e^{-tu}\right)\\
=&\ \lim_{n \to \infty}\dfrac{\dd^2}{\dd t\dd s} \left( \frakD(e^{tu})^{\frac{1}{n}}e^{tu}
\frakD(e^{sv})^{\frac{1}{n}}e^{-tu}
e^{tu}e^{sv}\frakD\left(e^{-tu}\right)^{\frac{1}{n}}e^{-sv}e^{-tu}
\right)^{n} \\
=&\ \lim_{n \to \infty}\bigg( \left[D\left(u\right),v\right] +\left[u,D\left(v\right)\right] +\dfrac{1}{n}\left[D\left(u\right),D\left(v\right)\right] \bigg)\\
=&\ \left[D\left(u\right),v\right] +\left[u,D\left(v\right)\right].
\qedhere
\end{align*}
\end{proof}

In reference to Definition ~\ref{defn:DfLH}, we define derivatives of functions with values in $\mathbb{R}$-groups.

\begin{defn}
Let $G$ be an $\mathbb{R}$-group. A map $a:\mathbb{R}\to G$ is called {\bf $\RR$-(multiplicatively) differentiable} at $x$ if $\lim\limits_{\lambda \to0 }(a(x+\lambda)a(x)^{-1})^{\frac{1}{\lambda}}$  uniquely exists. In this case, we denote the limit by $\frac{d_G}{d_Gx}\Big(a(x)\Big)$ and call it the {\bf \bf$\mathbb{R}$-derivative} of $a$ at $x$.
\label{defn:DfR}
\end{defn}

\begin{remark}
When the $\mathbb{R}$-group $G$ is $\mathbb{R}$ and the multiplication of $G$ is the addition of $\mathbb{R}$, then $$\lim\limits_{\lambda \to\infty }(a(x+\lambda)a(x)^{-1})^{\frac{1}{\lambda}} = \lim\limits_{\lambda \to\infty }\frac{a(x+\lambda)-a(x)}{\lambda},$$
which is just the usual derivative in analysis. See Lemma~\ref{lemma:tsmj} for another example.
\end{remark}

The following is the Leibniz rule for $\mathbb{R}$-groups.

\begin{theorem}
Let $G$ be an $\mathbb{R}$-group and let $a:\mathbb{R}\to G$ be $\mathbb{R}$-differentiable at each $x\in\RR$. If
\begin{align*}
F_G: G\times G \rightarrow G,\quad  (c,d) \mapsto  \lim_{n\to \infty}\Big((c)^{\frac{1}{n}}a(x)(d)^{\frac{1}{n}}a(x)^{-1}\Big)^{n}
\end{align*}
is a \complim map for each $x\in \mathbb{R}$,
then
\begin{align*}
        \frac{d_G}{d_Gx}\Big(a(x)b(x)\Big)=\lim_{n\to \infty}\Bigg(\bigg(\frac{d_G}{d_Gx}\Big(a(x)\Big)\bigg)^{\frac{1}{n}}a(x)\bigg(\frac{d_G}{d_Gx}\Big(b(x)\Big)\bigg)^{\frac{1}{n}}a(x)^{-1}\Bigg)^{n}.
    \end{align*}
\end{theorem}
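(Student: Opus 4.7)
The plan is to derive this statement as a direct specialization of Theorem~\ref{thm:HDfml}, in exactly the same manner that the integration-by-parts formula Corollary~\ref{co:rgpint} is obtained from Theorem~\ref{thm:HIfml}. Concretely, I would take $S:=G$ and set $\mfrakL_\lambda:=P_\lambda$ (so $\mfrakL_\lambda(u)=u^\lambda$) and $\mfrakH_\lambda:=P_{\frac{1}{\lambda}}$ (so $\mfrakH_\lambda(u)=u^{\frac{1}{\lambda}}$), and then verify that all the hypotheses of Theorem~\ref{thm:HDfml} are met.

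First I would check that, under these choices, $G$ itself is a $(\mfrakL_\lambda,\mfrakH_\lambda)$-subset of $G$ with limit-weight zero. The identity $\mfrakL_\lambda \mfrakH_\lambda=\id_G$ follows from $P_\lambda P_{\frac{1}{\lambda}}=\id_G$ (the lemma after Definition~\ref{defn:rgroup}), and $G_\lambda = G\cap \mfrakH_\lambda^{-1}(G)=G$ since $\mfrakH_\lambda$ is defined on all of $G$. The limit-weight-zero condition $\lim_{\lambda\to 0}\mfrakL_\lambda(a)=\lim_{\lambda\to 0}K_a(\lambda)=K_a(0)=e$ follows from the continuity of the one-parameter subgroup built into the definition of an $\RR$-group.

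Second, I would match up the two notions of differentiability: specializing Definition~\ref{defn:DfLH} with $\mfrakH_\lambda=P_{\frac{1}{\lambda}}$ yields $\lim_{\lambda\to 0}\bigl(a(x+\lambda)a(x)^{-1}\bigr)^{\frac{1}{\lambda}}$, which is exactly the $\RR$-derivative $\frac{d_G}{d_G x}(a(x))$ of Definition~\ref{defn:DfR}. Hence the $\RR$-differentiability hypothesis on $a$ (and $b$) transports directly to the $(\mfrakL_\lambda,\mfrakH_\lambda)$-differentiability required by Theorem~\ref{thm:HDfml}.

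Third, I would confirm the auxiliary hypotheses of Theorem~\ref{thm:HDfml}: the containment $\mfrakL_\lambda(u)a(x)\mfrakL_\lambda(v)a(x)^{-1}\in G_\lambda$ is automatic because $G_\lambda=G$; and the \complim condition on $F_S:(u,v)\mapsto \lim_{n\to\infty}\mfrakH_{\frac{1}{n}}\!\bigl(\mfrakL_{\frac{1}{n}}(u)a(x)\mfrakL_{\frac{1}{n}}(v)a(x)^{-1}\bigr)$ becomes, after the substitutions above, precisely the \complim hypothesis on $F_G:(c,d)\mapsto \lim_{n\to\infty}\bigl(c^{\frac{1}{n}}a(x)d^{\frac{1}{n}}a(x)^{-1}\bigr)^n$ stated in the theorem. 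The conclusion of Theorem~\ref{thm:HDfml} then reads, under these substitutions, exactly as the claimed multiplicative Leibniz rule. The only real work is bookkeeping: translating between the abstract pair-map notation and the $r$-th power notation; no further estimates are needed, since the heavy lifting (Baker--Campbell--Hausdorff expansion and the synchronized-limit argument) is already contained in the proof of Theorem~\ref{thm:HDfml}.
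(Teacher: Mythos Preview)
Your proposal is correct and is exactly the approach taken in the paper: the proof there is the single line ``It follows from Theorem~\ref{thm:HDfml} by taking $\mfrakL_{\frac{1}{n}}:=\pown^{-1}$ and $\mfrakH_{\frac{1}{n}}:=\pown$,'' and your write-up simply spells out the verification of the hypotheses in more detail.
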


\begin{proof}
It follows from Theorem ~\ref{thm:HDfml} by
taking $\mfrakL_{\frac{1}{n}}:=\pown^{-1}$ and $
\mfrakH_{\frac{1}{n}}:=\pown$.
\end{proof}

\begin{theorem}
Let $G$ be a complex matrix Lie group and $\frakg$ the Lie algebra of $G$ at $e$ $($resp. Let $\frakJ$ be given in Eq.~\eqref{eq:grj}$)$. Suppose that the map $\exp:\frakg \rightarrow G$  is bijective. Then an operator $\frakD: G\rightarrow G$ $($resp. $\frakD^{\frakJ}:\exp(\frakJ)\rightarrow \exp(\frakJ)$$)$  is a differential operator with $($limit-$)$weight zero if and only if
\vpb
\begin{align}
    \frakD\Big(\exp(u\big)\exp\big(v\big)\Big)= \exp\Big(\widetilde \frakD(u)+\exp(u)\widetilde \frakD(v) \exp(u)^{-1}\Big), \quad u,v\in \frakg,
    \label{eq:diffg0e}
\end{align}
\begin{align*}
    \Big(\text{resp.\ }\frakD^{\frakJ}\Big(\exp(u\big)\exp\big(v\big)\Big)= \exp\Big(\widetilde{\frakD^{\frakJ}}(u)+\exp(u)\widetilde{\frakD^{\frakJ}}(v) \exp(u)^{-1}\Big), \quad u,v\in \frakJ\Big),
\end{align*}
where $\widetilde\frakD$ is the \mcorr of $\frakD$ on $\frakg$ $($resp. $\frakJ$$)$ given in Eq.~\eqref{eq:tilfb}.
\end{theorem}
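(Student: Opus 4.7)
My plan is to adapt the argument of Theorem~\ref{thm:rboze} to the differential setting, since the only structural difference is that the Rota-Baxter cross-conjugation pattern $\mfrakL(a)\frakB(a)\mfrakL(b)\frakB(a)^{-1}$ is replaced by the purely multiplicative pattern $\frakD(a)^{1/n}\,a\,\frakD(b)^{1/n}\,a^{-1}$ appearing in Eq.~\eqref{eq:diffg0}. I will treat in detail the case $\frakD\colon G\to G$; the case of $\frakD^{\frakJ}\colon\exp(\frakJ)\to\exp(\frakJ)$ follows identically upon pointwise evaluation at each $x\in\RR$, since products, inverses, $n$-th powers and exponentials on $\exp(\frakJ)$ are all defined pointwise, with Lemma~\ref{Lemma:thklyf} and Corollary~\ref{coro:0mult} guaranteeing that the relevant limits commute with pointwise evaluation.

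The core step is to recognize the iterated limit in Eq.~\eqref{eq:diffg0} in closed exponential form. For $u,v\in\frakg$, set $a:=\exp(u)$ and $b:=\exp(v)$, so that $\frakD(a)=\exp(\widetilde{\frakD}(u))$ and $\frakD(b)=\exp(\widetilde{\frakD}(v))$ by Eq.~\eqref{eq:tilfb}. Using the standard matrix identity $a\,\exp(X)\,a^{-1}=\exp(aXa^{-1})$, I rewrite the middle factor as
\[
\exp(u)\,\exp\!\Big(\tfrac{\widetilde{\frakD}(v)}{n}\Big)\,\exp(u)^{-1}=\exp\!\Big(\tfrac{1}{n}\,\exp(u)\,\widetilde{\frakD}(v)\,\exp(u)^{-1}\Big).
\]
Expanding this factor together with $\exp(\widetilde{\frakD}(u)/n)$ as $I+(\cdot)/n+o(1/n)$, multiplying, and then appealing to Eq.~\eqref{eq:expe} to take the $n$-th power limit, I will obtain
\[
\lim_{n\to\infty}\Big(\frakD(a)^{\frac{1}{n}}\,a\,\frakD(b)^{\frac{1}{n}}\,a^{-1}\Big)^{n}=\exp\!\Big(\widetilde{\frakD}(u)+\exp(u)\,\widetilde{\frakD}(v)\,\exp(u)^{-1}\Big).
\]

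With this identification, the equivalence is immediate. If $\frakD$ satisfies Eq.~\eqref{eq:diffg0}, then the left-hand side of the displayed limit equals $\frakD(ab)=\frakD(\exp(u)\exp(v))$, which is exactly Eq.~\eqref{eq:diffg0e}. Conversely, if Eq.~\eqref{eq:diffg0e} holds, the same chain read in reverse, together with the bijectivity of $\exp\colon\frakg\to G$ ensuring every pair $(a,b)\in G\times G$ arises from some $(u,v)\in\frakg\times\frakg$, recovers Definition~\ref{defn:diffg0}. The only technical point is the validity of the $o(1/n)$ expansion and the passage to the limit under the $n$-th power, both of which rest squarely on Eq.~\eqref{eq:expe}. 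Since this is precisely the device used in the proof of Theorem~\ref{thm:rboze}, I do not anticipate a genuinely new obstacle---only careful bookkeeping to keep track of the $o(1/n)$ remainder terms under the product expansion and to record that the conjugation $a(\cdot)a^{-1}$ interacts cleanly with $\exp$ on matrices.
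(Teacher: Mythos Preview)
Your proposal is correct and follows essentially the same approach as the paper: both compute the limit in Eq.~\eqref{eq:diffg0} in closed exponential form via the expansion $I+(\cdot)/n+o(1/n)$ and Eq.~\eqref{eq:expe}, then read off the equivalence using bijectivity of $\exp$. The only tactical difference is that you invoke the conjugation identity $a\exp(X)a^{-1}=\exp(aXa^{-1})$ up front to collapse the middle factor before expanding, whereas the paper expands all four exponential factors directly and multiplies them out; your shortcut is a mild simplification but not a genuinely different route.
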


\begin{proof}
We only prove the case of $\frakD: G\rightarrow G$, as the other case is similar.
We have
{\small
\begin{align*}
   &\ \lim_{n \to \infty}\Big(\exp(\frac{1}{n}\widetilde \frakD(u))\exp(u)\exp(\frac{1}{n}\widetilde \frakD(v))\exp(-u)\Big)^{n}\\
    =&\ \lim_{n \to \infty}\bigg(\Big(\sum_{r_1,s_1=0}^{\infty}\frac{(\frac{1}{n}\widetilde \frakD(u))^{r_1}u^{s_1}}{r_1!s_1!}\Big)
    \Big(\sum_{r_2,s_2=0}^{\infty}\frac{(\frac{1}{n}\widetilde \frakD(v))^{r_2}(-u)^{s_2}}{r_2!s_2!}\Big)\bigg)^n\\
    =&\ \lim_{n \to \infty}\bigg(\Big(I + u + \frac{u^2}{2!} +\cdots+\frac{1}{n}\widetilde \frakD(u)(I + u + \frac{u^2}{2!} +\cdots)+o(\frac{1}{n})\Big)\\
    &\hspace{1.3cm}\Big(I - u + \frac{u^2}{2!} +\cdots+\frac{1}{n}\widetilde \frakD(v)(I - u +\frac{u^2}{2!} +\cdots)+o(\frac{1}{n})\Big)\bigg)^n\\
    =&\ \lim_{n \to \infty}\bigg(\Big(\exp(u)+\frac{1}{n}\widetilde \frakD(u)\exp(u)+o(\frac{1}{n})\big)\big(\exp(-u)+\frac{1}{n}\widetilde \frakD(v)\exp(u)+o(\frac{1}{n})\Big)\bigg)^n\\
    =&\ \lim_{n \to \infty}\Big(I+\frac{1}{n}(\widetilde \frakD(u)+\exp(u)\widetilde \frakD(v)\exp(-u)+o(\frac{1}{n})\Big)^n\\
    =&\ \exp\Big(\widetilde \frakD(u)+\exp(u)\widetilde \frakD(v)\exp(-u)\Big)
= \exp\Big(\widetilde \frakD(u)+\exp(u)\widetilde \frakD(v) \exp(u)^{-1}\Big).
\end{align*}
}
By Eq.~(\ref{eq:diffg0}), $\frakD:G\rightarrow G$ is a differential operator with weight zero if and only if
\begin{equation*}
 \frakD\Big(\exp(u\big)\exp\big(v\big)\Big) = \lim_{n \to \infty}\Big(\exp(\frac{1}{n}\widetilde \frakD(u))\exp(u)\exp(\frac{1}{n}\widetilde \frakD(v))\exp(-u)\Big)^{n},
\end{equation*}
which holds if and only if Eq.~(\ref{eq:diffg0e}) is valid.
\end{proof}

Now we calculate the derivation of functions on a nilpotent group.

\begin{lemma}
\label{lemma:tsmj}
Let $\frakJ$ be the group given in Eq.~(\ref{eq:grj}). Then for each $u\in\frakJ$,
\begin{align*}
     \frac{d_G}{d_Gx}\Big(\exp(u(x))\Big)=\exp\bigg(\frac{d}{dx}\Big(u(x)\Big)+\frac{1}{2}\Big[u(x),\frac{d}{dx}\Big(u(x)\Big)\Big]\bigg),
\end{align*}
and $\frac{d_G}{d_Gx}$ is a map on $\exp(\frakJ)$.
\end{lemma}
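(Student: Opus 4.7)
The plan is to exploit the fact that $\frakg$ is $2$-step nilpotent (all triple commutators in strictly upper-triangular $3\times 3$ matrices vanish), so the Baker-Campbell-Hausdorff series truncates at the first commutator:
\[
\exp(X)\exp(Y) = \exp\Bigl(X + Y + \tfrac{1}{2}[X,Y]\Bigr), \quad X,Y \in \frakg.
\]
In particular, every element of $\exp(\frakJ)$ admits a unique $r$-th power given by $\exp(u(x))^{r} = \exp(r\,u(x))$ for $r \in \RR$, so $\exp(\frakJ)$ is an $\RR$-group and Definition~\ref{defn:DfR} applies.

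First I would form the difference quotient element $a(x+\lambda) a(x)^{-1}$ for $a(x) := \exp(u(x))$. Using the truncated BCH formula and $\exp(u(x))^{-1} = \exp(-u(x))$,
\[
\exp(u(x+\lambda))\exp(-u(x)) = \exp\Bigl(u(x+\lambda) - u(x) - \tfrac{1}{2}[u(x+\lambda), u(x)]\Bigr).
\]
Next I would Taylor expand: since $u$ is smooth, $u(x+\lambda) = u(x) + \lambda\,\tfrac{d}{dx}u(x) + o(\lambda)$, so
\[
u(x+\lambda) - u(x) = \lambda\,\tfrac{d}{dx}u(x) + o(\lambda), \qquad [u(x+\lambda), u(x)] = \lambda\,[\tfrac{d}{dx}u(x), u(x)] + o(\lambda).
\]
Substituting yields
\[
\exp(u(x+\lambda))\exp(-u(x)) = \exp\Bigl(\lambda\bigl(\tfrac{d}{dx}u(x) + \tfrac{1}{2}[u(x), \tfrac{d}{dx}u(x)]\bigr) + o(\lambda)\Bigr).
\]

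Now I would take the $\frac{1}{\lambda}$-th power using the $\RR$-group structure, which simply scales the exponent:
\[
\bigl(\exp(u(x+\lambda))\exp(-u(x))\bigr)^{\frac{1}{\lambda}} = \exp\Bigl(\tfrac{d}{dx}u(x) + \tfrac{1}{2}[u(x), \tfrac{d}{dx}u(x)] + o(1)\Bigr).
\]
Letting $\lambda \to 0$ and invoking continuity of $\exp$ on $\frakg$ gives
\[
\frac{d_G}{d_G x}\bigl(\exp(u(x))\bigr) = \exp\Bigl(\tfrac{d}{dx}u(x) + \tfrac{1}{2}[u(x), \tfrac{d}{dx}u(x)]\Bigr),
\]
which is the claimed formula and shows the limit exists uniquely, so $\exp(u(x))$ is $\RR$-differentiable in the sense of Definition~\ref{defn:DfR}.

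Finally, to conclude that $\frac{d_G}{d_G x}$ restricts to a map $\exp(\frakJ) \to \exp(\frakJ)$, I would observe that $\tfrac{d}{dx}u(x) \in \frakJ$ since $u$ is smooth, and $[u(x), \tfrac{d}{dx}u(x)] \in \frakJ$ since $\frakg$ is closed under the bracket; hence $\tfrac{d}{dx}u + \tfrac{1}{2}[u, \tfrac{d}{dx}u] \in \frakJ$, and its exponential lies in $\exp(\frakJ)$. No real obstacle is expected here; the only subtle point is the justification that the $o(\lambda)$ remainder inside the exponent becomes $o(1)$ after dividing by $\lambda$, which follows from smoothness of $u$ together with the $2$-step nilpotency truncating BCH exactly (so no hidden higher-order corrections appear).
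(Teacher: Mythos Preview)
Your proposal is correct and follows essentially the same argument as the paper: apply the truncated BCH formula (valid by $2$-step nilpotency) to $\exp(u(x+\lambda))\exp(-u(x))$, scale the exponent by $1/\lambda$, and take the limit. The only cosmetic difference is that you package the computation via a Taylor expansion with $o(\lambda)$ remainders, whereas the paper writes out the difference quotient $\frac{u(x+t)-u(x)}{t}$ directly and passes to the limit; both routes are equivalent.
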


\begin{proof}
The first statement follows from
\vpb
{\small
\begin{align*}
\frac{d_G}{d_Gx}\Big(\exp(u(x))\Big)
=&\lim_{t\to 0}\bigg(\exp\Big(u(x+t)\Big)\exp\Big(-u(x)\Big)\bigg)^{\frac{1}{t}} \quad
        \text{(by Definition~\ref{defn:DfR})}\\
        =&\ \lim_{t\to 0}\exp\bigg(\frac{1}{t}\Big(u(x+t)-u(x)+\frac{1}{2}[u(x+t),-u(x)]\Big)\bigg)\quad
        \text{(by the nilpotency of $\frakg$)}\\
        =&\ \lim_{t\to 0}\exp\bigg(\frac{1}{t}\Big(u(x+t)-u(x)+\frac{1}{2}[u(x+t)-u(x),-u(x)]+\frac{1}{2}[u(x),-u(x)]\Big)\bigg)\\
        =&\ \lim_{t\to 0}\exp\bigg(\frac{u(x+t)-u(x)}{t}+\frac{1}{2}\Big[\frac{u(x+t)-u(x)}{t},-u(x)\Big]\bigg)\\
        =&\ \exp\bigg(\frac{d}{dx}\Big(u(x)\Big)+\frac{1}{2}\Big[u(x),\frac{d}{dx}\Big(u(x)\Big)\Big]\bigg).
    \end{align*}
}
Since $\frac{d}{dx}\Big(u(x)\Big)+\frac{1}{2}\Big[u(x),\frac{d}{dx}\Big(u(x)\Big)\Big]$ is in $\frakJ$ and $\exp$ is bijective, the operator $\frac{d_G}{d_Gx}$ is a map on $\exp(\frakJ)$. 
\end{proof}

Now we give an example of differential groups with limit-weight zero.
\begin{prop}
Let $\frakg=\{ (a_{ij}) \in \mathbb{C}^{3 \times 3} \mid a_{ij} = 0\, \text{ for }\,i\geqslant j\}$ be the nilpotent Lie algebra and $G=\exp(\frakg)$  be the simply connected nilpotent analytic Lie group whose tangent space is $\frakg$. Let $D$ be a differential operator on $\frakg$ with weighted zero. Define
\vpb
\begin{equation*}
\frakD:G\rightarrow G,\quad \exp(u)\mapsto \exp\bigg(D(u)+\frac{1}{2}[u,D(u)]\bigg),\quad u\in \frakg.
\label{eq:expdf}
\end{equation*}
\vpb
\begin{equation*}
   \text{\bigg(resp.\  } \frac{d_G}{d_Gx}:G\rightarrow G,\quad \exp(u)\mapsto \exp\bigg(\frac{d}{dx}(u(x))+\frac{1}{2}[u(x),\frac{d}{dx}(u(x))]\bigg),\quad u\in \frakJ \text{\bigg).}
\end{equation*}
Then $(G,\frakD)$ $($resp. $(\exp(\frakJ,\frac{d_G}{d_Gx}))$$)$ is a differential group with limit-weight zero.
\label{ex:diffgpzero}
\end{prop}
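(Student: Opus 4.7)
The plan is to mirror the strategy used in the proof of Proposition~\ref{prop:rb33}, applying the theorem that characterizes differential operators with limit-weight zero via Eq.~(\ref{eq:diffg0e}). I will restrict to the case of $\frakD : G\to G$, since the $\frakJ$ case follows by exactly the same computation once Lemma~\ref{lemma:tsmj} is used to identify $\tfrac{d_G}{d_Gx}$ with the formula obtained by replacing $D$ by $\tfrac{d}{dx}$ (the ordinary derivative is a weight-zero derivation on $\frakJ$ with respect to the pointwise bracket).

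First I would invoke Corollary~\ref{coro:0mult} to ensure that $G=\exp(\frakg)$ is a complete $\lim\limits_{n\to\infty}(P_n^{-1},P_n)$-group, so that the framework of Definition~\ref{defn:diffg0} applies. Reading off the definition of $\frakD$, its exponential adjoint (Eq.~(\ref{eq:tilfb})) is
\[
\widetilde{\frakD}(u) \;=\; D(u) + \tfrac{1}{2}\bigl[u, D(u)\bigr],\qquad u\in\frakg.
\]
By the theorem following Eq.~(\ref{eq:diffg0e}), it suffices to check the identity
\[
\frakD\bigl(\exp(u)\exp(v)\bigr) \;=\; \exp\!\Bigl(\widetilde{\frakD}(u) + \exp(u)\,\widetilde{\frakD}(v)\,\exp(u)^{-1}\Bigr), \qquad u,v\in\frakg.
\]

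The key computational input is the nilpotency of $\frakg$: any product of three strictly upper-triangular $3\times3$ matrices vanishes, so every iterated commutator of length $\geq 3$ in $\frakg$ is zero. Consequently the Baker-Campbell-Hausdorff formula collapses to $\exp(u)\exp(v)=\exp(u+v+\tfrac12[u,v])$, and Eq.~(\ref{eq:BHF}) reduces to $\exp(u)\,w\,\exp(-u) = w + [u,w]$. Using these simplifications together with the weight-zero derivation identity $D[a,b]=[Da,b]+[a,Db]$, I would compute the left-hand side as
\[
\frakD\bigl(\exp(u+v+\tfrac12[u,v])\bigr) \;=\; \exp\!\Bigl(Du+Dv+\tfrac{1}{2}[u,Du]+\tfrac{1}{2}[v,Dv]+[u,Dv]\Bigr),
\]
after expanding $D(u+v+\tfrac12[u,v])$ and $\tfrac12[\,\cdot\,,D(\,\cdot\,)]$ and systematically dropping every triple bracket (in particular, the cross-term $\tfrac12[Du,v]+\tfrac12[v,Du]$ cancels while $\tfrac12[u,Dv]+\tfrac12[u,Dv]=[u,Dv]$ survives). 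On the right-hand side, the same reductions give
\[
\widetilde{\frakD}(u) + \exp(u)\widetilde{\frakD}(v)\exp(-u) \;=\; Du + \tfrac{1}{2}[u,Du] + Dv + \tfrac{1}{2}[v,Dv] + [u,Dv],
\]
since $[u,\tfrac{1}{2}[v,Dv]]$ and all higher $\ada_u^k$-terms vanish by nilpotency. The two expressions agree, so Eq.~(\ref{eq:diffg0e}) holds and $(G,\frakD)$ is a differential group with limit-weight zero.

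The main obstacle is the bookkeeping of the many commutator terms produced by expanding $D$ on quadratic expressions and by pushing $\widetilde{\frakD}(v)$ through $\exp(u)$; however, the nilpotency hypothesis trivializes every iterated bracket of depth $\geq 2$ and leaves only a handful of genuinely quadratic contributions to match. The step requiring real attention is verifying that the linear combinations of the surviving brackets $[u,Du]$, $[v,Dv]$, $[u,Dv]$, $[v,Du]$ on both sides coincide after the antisymmetric cancellations.
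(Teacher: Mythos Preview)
Your proposal is correct and follows essentially the same approach as the paper: reduce via the characterization in Eq.~(\ref{eq:diffg0e}) to checking $\frakD(\exp(u)\exp(v))=\exp\bigl(\widetilde{\frakD}(u)+\exp(u)\widetilde{\frakD}(v)\exp(-u)\bigr)$, then expand both sides using the truncated BCH formula, the conjugation formula $\exp(u)w\exp(-u)=w+[u,w]$, and the derivation identity, discarding all triple brackets by nilpotency. The paper carries out the computation in the reverse order (right-hand side first) and with slightly different intermediate bookkeeping, but the argument is the same.
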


\begin{proof}
We only consider the case of $(G,\frakD)$.
First,
\vpb
{\small
\begin{align*}
\exp(u)\widetilde{\frakD}(v)\exp(-u)=&\ \bigg(\Big(\sum_{k=0}^{\infty}\frac{u^k}{k!}\Big)\widetilde{\frakD}(v)\Big(\sum_{k=0}^{\infty}\frac{(-u)^k}{k!}\Big)\bigg)\\
=&\ \widetilde{\frakD}(v)+\sum_{k=1}^{\infty} \frac{1}{k!}\ada_u^k\widetilde{\frakD}(v)\quad \text{(by Eq.~(\ref{eq:BHF}))}\\
=&\ \widetilde{\frakD}(v)+[u,\widetilde{\frakD}(v)]\quad\text{(by the nilpotency of $\frakg$)}\\
=&\ D(v)+[u,D(v)]+\frac{1}{2}[v,D(v)].
\vpb
\end{align*}
}
Next, applying the Baker-Campbell-Hausdorff formula and the nilpotency of $\frakg$,  we have 
\vpa
$$\exp(u)\exp(v)=\exp\Big(u+v+\frac{[u,v]}{2}\Big).
\vpb
$$
Whence
\vpa
{\small
\begin{align*}
&\frakD\bigg(\exp\Big(u\Big)\exp\Big(v\Big)\bigg)= \frakD\bigg(\exp\Big(u+v+\frac{[u,v]}{2}\Big)\bigg)\quad \text{(by the nilpotency of $\frakg$)}\\
=&\ \exp\bigg(D(u)+D(v)+D\Big(\frac{[u,v]}{2}\Big)+\frac{1}{2}\Big[u+v+\frac{[u,v]}{2},D(u)+D(v)+D\Big(\frac{[u,v]}{2}\Big)\Big]\bigg)\\
=&\ \exp\bigg(D(u)+D(v)+\frac{[D(u),v]}{2}+\frac{[u,D(v)]}{2}+\frac{1}{2}\Big[u+v+\frac{[u,v]}{2},D(u)+D(v)+\frac{[D(u),v]}{2}+\frac{[u,D(v)]}{2}\Big]\bigg)\\
=&\ \exp\Big(D(u)+D(v)+\frac{[D(u),v]}{2}+\frac{[u,D(v)]}{2}+\frac{1}{2}[u,D(u)]+\frac{1}{2}[v,D(u)]+\frac{1}{2}[u,D(v)]+\frac{1}{2}[u,D(u)]\Big)\\
=&\ \exp\Big(D(u)+\frac{1}{2}[u,D(v)]+D(v)+[u,D(v)]+\frac{1}{2}[v,D(v)]\Big)\\
=&\ \exp\Big(\widetilde{\frakD}(u)+\exp(u)\widetilde{\frakD}(v)\exp(-u)\Big). \qedhere
\end{align*}
}
\end{proof}
Now we give the First Fundamental Theorem of Calculus (Newton-Leibniz Formula) of functions on the special nilpotent Lie group $\exp(\frakg)$, where $\frakg$ is given in Eq.~(\ref{eq:grj}).

\begin{prop}
\label{pp:fftc}
Let $\exp(\frakJ)$ be the group where $\frakJ$ is given in Eq.~\eqref{eq:grj}. Then for each $u\in\frakJ$,
\begin{align*}
    \frac{d_G}{d_Gx}\Big(\int_{0}^{x}\exp(u(x)) d_Gt\Big)=\int_{0}^{x}\Big(\frac{d_G}{d_Gt}\exp(u(t))\Big) d_Gt=\exp(u(x)).
\end{align*}
\end{prop}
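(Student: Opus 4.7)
The plan is to prove both equalities by direct computation, applying the explicit formulas from Lemma~\ref{lemma:intbm} and Lemma~\ref{lemma:tsmj} in sequence, and then exploiting the fact that $\frakg$ (the strictly upper triangular $3\times 3$ matrices) is $2$-step nilpotent, so that every iterated bracket $[\cdot,[\cdot,\cdot]]$ vanishes. This nilpotency turns an a priori messy BCH-style calculation into a cancellation of two explicit terms in each direction.

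For the first equality, I would write $F(x):=\int_0^{x}\exp(u(t))\,d_Gt = \exp(w(x))$ with
\[
w(x)=\int_0^{x}u(t)\,dt+\tfrac{1}{2}\int_0^{x}\Bigl[u(t),\int_0^{t}u(s)\,ds\Bigr]dt
\]
by Lemma~\ref{lemma:intbm}. Lemma~\ref{lemma:tsmj} then gives
\[
\tfrac{d_G}{d_Gx}F(x)=\exp\!\Bigl(w'(x)+\tfrac{1}{2}[w(x),w'(x)]\Bigr),\qquad w'(x)=u(x)+\tfrac{1}{2}\bigl[u(x),\textstyle\int_0^{x}u(s)\,ds\bigr].
\]
Since every term in $w(x)$ and in $w'(x)-u(x)$ that sits inside an outer bracket $[w(x),\,\cdot\,]$ has the form $[\,\cdot\,,\int\cdots]$ and so produces a triple bracket, only $[w(x),u(x)]$ survives, and by the same token only $[\int_0^{x}u(s)\,ds,u(x)]$ survives inside $[w(x),u(x)]$. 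Thus $\tfrac{1}{2}[w(x),w'(x)]=-\tfrac{1}{2}[u(x),\int_0^{x}u(s)\,ds]$, which cancels the corresponding term in $w'(x)$, leaving $u(x)$.

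For the second equality, I would set $v(t):=u'(t)+\tfrac{1}{2}[u(t),u'(t)]$, so that Lemma~\ref{lemma:tsmj} gives $\tfrac{d_G}{d_Gt}\exp(u(t))=\exp(v(t))$, and then apply Lemma~\ref{lemma:intbm} to obtain
\[
\int_0^{x}\exp(v(t))\,d_Gt=\exp\!\Bigl(\int_0^{x}v(t)\,dt+\tfrac{1}{2}\int_0^{x}\bigl[v(t),\textstyle\int_0^{t}v(s)\,ds\bigr]dt\Bigr).
\]
All the $\tfrac{1}{2}[u,u']$ pieces inside $v$ lie in $[\frakg,\frakg]$, so once they appear inside another bracket they produce triple brackets and vanish; the inner double integral therefore reduces to $\int_0^{x}[u'(t),u(t)-u(0)]\,dt$. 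Taking the natural initial condition $u(0)=0$ (without which one only gets $\exp(u(x)-u(0))$, i.e.\ the usual multiplicative Newton--Leibniz formula $F(x)F(0)^{-1}$), this becomes $-\int_0^{x}[u(t),u'(t)]\,dt$, exactly cancelling the $\tfrac{1}{2}\int_0^{x}[u(t),u'(t)]\,dt$ coming from $\int_0^{x}v(t)\,dt=u(x)+\tfrac{1}{2}\int_0^{x}[u(t),u'(t)]\,dt$. The remaining term is $u(x)$.

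The main obstacle is not conceptual but combinatorial: keeping careful track of which nested brackets collapse under $[\frakg,[\frakg,\frakg]]=0$, and noticing that the second equality as stated tacitly requires the initial condition $u(0)=0$ (equivalently $\exp(u(0))=e$), since the multiplicative FTC in a group is $\int_0^{x}f'(t)\,d_Gt=f(x)f(0)^{-1}$.
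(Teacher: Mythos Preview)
Your proposal is correct and follows exactly the same route as the paper's proof: apply Lemmas~\ref{lemma:intbm} and~\ref{lemma:tsmj} in succession and use the 2-step nilpotency $[\frakg,[\frakg,\frakg]]=0$ to collapse all nested brackets. Your observation that the second equality tacitly requires the initial condition $u(0)=0$ is in fact sharper than the paper's own proof, which silently replaces $\int_0^x u'(t)\,dt$ by $u(x)$ and $\int_0^t u'(s)\,ds$ by $u(t)$; without this assumption one indeed obtains $\exp(u(x))\exp(u(0))^{-1}$, i.e.\ the multiplicative Newton--Leibniz formula you describe.
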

\begin{proof}
According to Lemmas~\ref{lemma:intbm} and~\ref{lemma:tsmj}, on the one hand,
{\small
\begin{align*}
&\ \frac{d_G}{d_Gx}\Big(\int_{0}^{x}\exp(u(x)) d_Gt\Big)= \frac{d_G}{d_Gx}\exp\bigg(\int_{0}^xu(t)dt+\frac{1}{2}\int_{0}^x[u(t),\int_{0}^tu(s)ds]dt\bigg)\\
=&\  \exp\Bigg(\frac{d}{dx}\bigg(\int_{0}^xu(t)dt+\frac{1}{2}\int_{0}^x\bigg[u(t),\int_{0}^tu(s)ds\bigg]dt\bigg)\\
&\ + \frac{1}{2}\bigg[\bigg(\int_{0}^xu(t)dt+\frac{1}{2}\int_{0}^x[u(t),\int_{0}^tu(s)ds]dt\bigg),\bigg(\frac{d}{dx}\Big(\int_{0}^xu(t)dt+\frac{1}{2}\int_{0}^x[u(t),\int_{0}^tu(s)ds]dt\Big)\bigg)\bigg]\Bigg)\\
=&\  \exp\Bigg(u(x)+\frac{1}{2}\bigg[u(x),\int_{0}^xu(s)ds\bigg]\\
&\ + \frac{1}{2}\bigg[\Big(\int_{0}^xu(t)dt+\frac{1}{2}\int_{0}^x[u(t),\int_{0}^tu(s)ds]dt\Big),\Big(u(x)+\frac{1}{2}[u(x),\int_{0}^tu(s)ds]\Big)\bigg]\Bigg)\\
=&\ \exp\Bigg(u(x)+\frac{1}{2}\bigg[u(x),\int_{0}^xu(s)ds\bigg] + \frac{1}{2}\bigg[\int_{0}^xu(t)dt,u(x)\bigg]\Bigg)\quad \text{(by the nilpotency of $\frakJ$) }\\
=&\ \exp(u(x)).
\end{align*}
}
On the other hand,
{\small
\begin{align*}
&\ \int_{0}^{x}\Big(\frac{d_G}{d_Gt}\exp(u(t))\Big) d_Gt=\int_{0}^{x}\Bigg(\exp\bigg(\frac{d}{dt}(u(t))+\frac{1}{2}[u(t),\frac{d}{dt}(u(t))]\bigg)\Bigg) d_Gt\\
=&\  \exp\Bigg(\int_{0}^x\bigg(\frac{d}{dt}(u(t))+\frac{1}{2}\Big[u(t),\frac{d}{dt}(u(t))\Big]\bigg)dt\\
&\ +\frac{1}{2}\int_{0}^x\bigg[\bigg(\frac{d}{dt}(u(t))+\frac{1}{2}[u(t),\frac{d}{dt}(u(t))]\bigg),\int_{0}^t\bigg(\frac{d}{ds}(u(s))+\frac{1}{2}[u(s),\frac{d}{ds}(u(s))]\bigg)ds\bigg]dt\Bigg)\\
=&\  \exp\Bigg(u(t)+\int_{0}^x\frac{1}{2}\Big[u(t),\frac{d}{dt}(u(t))\Big]dt+\int_{0}^x\frac{1}{2}\Big[\frac{d}{dt}(u(t)),u(t)]dt \Bigg)
\quad \text{(by the nilpotency of $\frakJ$) }\\
=&\ \exp(u(x)).
\end{align*}
}
Hence the proof is completed.
\end{proof}

\vpb

\noindent
{\bf Acknowledgments.} This work is supported by the National Natural Science Foundation of China (12071191) and Innovative Fundamental Research Group Project of Gansu Province (23JRRA684). The authors thank Chengming Bai and Yong Liu for helpful discussions, and Yifei Li for valuable programming.

\noindent
{\bf Declaration of interests.} The authors have no conflicts of interest to disclose.

\noindent
{\bf Data availability.} Data sharing is not applicable as no new data were created or analyzed.

\vpb


\begin{thebibliography}{abc}
	
\bibitem{At} F. V. Atkinson, Some aspects of Baxter's functional equation, {\em J. Math. Anal. Appl.} {\bf 7} (1963), 1-30.
	

\bibitem{BBGN} C. Bai, O. Bellier, L. Guo and X. Ni, Spliting of operations, Manin products and Rota-Baxter operators, \emph{ Int. Math. Res. Not.} {\bf 2013}  2013, 485-524.

\bibitem{BGST} C. Bai, L. Guo, Y. Sheng and R. Tang, Post-groups, (Lie-)Butcher groups and the Yang-Baxter equation. {\em Math. Ann.} {\bf 388} (2023), 1-41.
	
\bibitem{BG} V. G. Bardakov and V. Gubarev, Rota-Baxter groups, skew left braces, and the Yang-Baxter equation. {\em J. Algebra} {\bf 596} (2022), 328-351.
	
\bibitem{BN} V. G. Bardakov and I. M. Nikonov, Relative Rota-Baxter operators on groups and Hopf algebras, arXiv:2311.09311, 2023.

\bibitem{BNY} V. G. Bardakov, M. V. Neshchadim and M. K Yadav, Symmetric skew braces and brace systems, {\em Forum Math} {\bf 35} (2023), 713-738.

\bibitem{Ba} G. Baxter, An analytic problem whose solution follows from a simple algebraic identity, {\em Pacific J. Math.} {\bf 10} (1960), 731-742.

\bibitem{BaR} R. J. Baxter, Partition function of the eight-vertex lattice model, {\em Ann. Phys.} {\bf 70} (1972), 193-228.

\bibitem{BD} A. A. Belavin and V. G. Drinfeld, Solutions of the classical Yang-Baxter equation for simple Lie algebras, {\em Funct. Anal. Appl.} {\bf 16} (1982), 159-180.

\bibitem{CS} A. Caranti and L. Stefanello, Skew braces from Rota–Baxter operators: a cohomological characterisation and some examples, {\em Ann. Matem. Pura ed Appl.} {\bf 202} (2023), 1-13.

\bibitem{Ca} P. Cartier, On the structure of free Baxter algebras, {\em Adv. Math.} {\bf{9}} (1972), 253-265.

\bibitem{CMS} F. Catino, M. Mazzotta and P. Stefanelli, Rota–Baxter operators on Clifford semigroups and the Yang–Baxter equation, {\em J. Algebra} {\bf 622} (2023), 587-613.

\bibitem{CP} V. Chari and A. Pressley, A Guide to Quantum Groups,  Cambridge University Press, Cambridge, 1994.

\bibitem{CK} A. Connes and D. Kreimer, Renormalization in quantum field theory and the Riemann-Hilbert problem. I. The Hopf algebra structure of graphs and the main theorem, {\em Comm. Math. Phys.} {\bf 210} (2000), 249-273.

\bibitem{Co} M. Cotlar, A unified theory  of  Hilbert  transforms  and ergodic theorems, {\em Revista  Matematica Cuyana} {\bf 1} (1955), 105-167.

\bibitem{DR} A. Das and N. Rathee, Extensions and automorphisms of Rota-Baxter groups, {\em J. Algebra} {\bf 636} (2023), 626-665.

\bibitem{FRS} E. Frenkel, N. Reshetikhin and M. A. Semenov-Tian-Shansky, Drinfeld-Sokolov reduction for difference operators and deformations of
$\mathfrak{W}$-algebras. I. The case of Virasoro algebra, {\em Comm. Math. Phys.} {\bf 192} (1998), 605-629.

\bibitem{GGH2} X. Gao, L. Guo and Z. Han, Limit-abelian Rota-Baxter operators on groups, pre-groups, braces and Yang-Baxter equations, preprint.

\bibitem{GGHZ} X. Gao, L. Guo, Z. Han and Y. Zhang, Pre-Hopf algebras, relative Rota-Baxter operators on Hopf algebras with limit-abelian weights and Yang-Baxter equations, in preparation.

\bibitem{GGLZ} X. Gao, L. Guo, Y. Liu and Z. Zhu, Operated groups, differential groups and Rota-Baxter groups with an emphasis on the free objects, {\em Comm. Algebra} {\bf 51} (2023), 4481-4500.

\bibitem{Go} M. Goncharov, Rota-Baxter operators on cocommutative Hopf algebras, {\em J. Algebra} {\bf 582} (2021), 39-56.
	
\bibitem{Gub} L. Guo,  An introduction to Rota-Baxter Algebras, International Press and Higher Education Press, 2012.

\bibitem{GLS} L. Guo, H. Lang and Y. Sheng, Integration and geometrization of Rota-Baxter operators, {\em Adv. Math.} {\bf 387} (2021), 107834.
	
\bibitem{H1} B. C. Hall, Lie Groups, Lie Algebras, and Representations. Springer, 2013.

\bibitem{JSZ} J. Jiang, Y. Sheng and C. Zhu, Lie theory and cohomology of relative Rota-Baxter operators,
{\em J. London Math. Soc.} {\bf 109} (2024), e12863.

\bibitem{Kn} A. W. Knapp, Lie Groups Beyond an Introduction. Birkh\"auser, 1996.

\bibitem{Ko} E. R. Kolchin, Differential Algebra and Algebraic Group, Academic Press, 1973.

\bibitem{LST1} Y. Li, Y. Sheng and R. Tang, Post-Hopf algebras, relative Rota–Baxter operators and solutions to the Yang-Baxter equation, {\em J. Noncomm. Geom.} {\bf 18} (2024), 605-630.
	
\bibitem{LST2} Y. Li, Y. Sheng and R. Tang, Relative Rota-Baxter operators with weight $0$ on groups, pre-groups, braces, the Yang-Baxter equation and $T$-structures, arXiv:2312.01337, 2023.

\bibitem{PS} M. van der Put and M. Singer, Galois Theory of Difference Equations, Springer, 1997.

\bibitem{RS1} A. G. Reyman and M. A. Semenov-Tian-Shansky, Reduction of Hamilton systems, affine Lie algebras and Lax equations, \emph{Invent. Math.} {\bf 54} (1979), 81-100.

\bibitem{RS2} A. G. Reyman and M. A. Semenov-Tian-Shansky, Reduction of Hamilton systems, affine Lie algebras and Lax equations II, \emph{Invent. Math.} {\bf 63} (1981), 423-432.

\bibitem{Ri} J. F. Ritt, Differential Equations from the Algebraic Standpoint, Amer. Math. Soc., 1934.

\bibitem{Ro1} G.-C. Rota, Baxter algebras and combinatorial identities, I, II, \emph{Bull. Amer. Math. Soc.} {\bf 75} (1969), 325-329.

\bibitem{Ro2} G.-C. Rota, Baxter operators, an introduction, in: Gian-Carlo Rota on Combinatorics, Introductory Papers and Commentaries, Birkh\"auser, 1995.

\bibitem{STS}
M. A. Semenov-Tian-Shansky, What is a classical $r$-matrix? {\em Funct. Anal. Appl.} {\bf 17} (1983), 259-272.

\bibitem{Tri} F. G. Tricomi, On the finite Hilbert transformation, {\em Quart. J. Math. Oxford} {\bf 2} (1951), 199-211.

\bibitem{Val} B. Vallette, Homology of generalized partition posets, \emph{J. Pure Appl. Algebra} {\bf 208} (2007), 699-725.

\bibitem{Ya} C. N. Yang, Some exact results  for the
many-body  problem in one dimension with repulsive delta-function interaction, {\em Phys. Rev. Lett.} {\bf 19} (1967), 1312-1314.

\end{thebibliography}
\end{document}